\documentclass[11pt]{amsart}

%
%
%


\usepackage{amssymb,latexsym,amsmath,mathdots,bm,hyperref,marginnote,xcolor,mathabx}
\usepackage{ulem}
\usepackage{tikz-cd}
\usepackage{tikz}
\usetikzlibrary{arrows.meta, decorations.pathmorphing, positioning}




\newtheorem{theorem}{Theorem}[section]
\newtheorem{lemma}[theorem]{Lemma}
\newtheorem{proposition}[theorem]{Proposition}
\newtheorem{corollary}[theorem]{Corollary}

\theoremstyle{definition}
\newtheorem{definition}[theorem]{Definition}

\newtheorem{assumption}[theorem]{Assumption}
\newtheorem{conjecture}[theorem]{Conjecture}

\theoremstyle{remark}
\newtheorem{remark}[theorem]{Remark}

\numberwithin{equation}{section}

\newcommand{\R}{{\mathbb R}}
\newcommand{\Z}{{\mathbb Z}}
\newcommand{\C}{{\mathbb C}}

\newcommand{\G}{{\mathbf G}}

\newcommand{\p}{\mathfrak{p}}
\newcommand{\GL}{{\rm GL}}

\def\k{\mathfrak k}

\begin{document}

\title[$L$-packets and the generic Arthur packet conjectures for $GU(n,n)$]{$L$-packets and the generic Arthur packet conjectures for even unitary similitude groups}


\author{Yeansu Kim}
\address{
Department of Mathematics Education, Chonnam National University, 77 Yongbong-ro, Buk-gu,
Gwangju 61186, South Korea}
\curraddr{}
\email{ykim@jnu.ac.kr}
\thanks{The first author is grateful to Purdue University for providing excellent working conditions during a one-year research visit (July 2022 - July 2023). The first author has been supported by the National Research Foundation of Korea (NRF) grant funded by the Korea government (MSIP) (No. RS-2022-0016551 and  No. RS-2024-00415601 (G-BRL))}

\author{Muthu Krishnamurthy}
\address{}
\curraddr{}
\email{mkrish1939@gmail.com}
\thanks{}

\author{Freydoon Shahidi}
\address{Department of Mathematics
Purdue University
150 N University Street
West Lafayette, IN 47907-2067
United States}
\curraddr{}
\email{shahidi@math.purdue.edu}
\thanks{}

\subjclass[2010]{Primary 11F67; 11F66, 11F70, 11F75, 22E55}
\date{}

\dedicatory{}

\commby{}

\begin{abstract}
We establish the generic local Langlands correspondence by showing the equality of the Langlands-Shahidi $L$-functions and Artin $L$-functions in the case of even unitary similitude groups. As an application, we prove both weak and strong versions of the generic Arthur packet conjectures in the cases of even unitary similitude groups and even unitary groups. We further describe (not necessarily generic) $L$-packets for even unitary similitude groups and establish their expected properties, including Shahidi's conjecture, the finiteness of $L$-packets, and other related results.
\end{abstract}

\maketitle

\section{\bf Introduction}
Let ${\bf G}$ be a connected reductive group defined over a non-archimedean local field $F$ of characteristic zero. Intuitively, the local Langlands correspondence asks whether we can parametrize the set of irreducible admissible representations of ${\bf G}(F)$ by Langlands parameters. Such a set of irreducible admissible representations that correspond to the Langlands parameter is called $L$-packets. One main question in the Langlands program is to study various properties of $L$-packets if it is defined. 

The main purpose of the paper is to prove one main property of $L$-packets, which is a strong version of the generic Arthur packet conjectures in the cases $GU(n,n)$ and $U(n,n)$. Let ${\bf G}_n:=GU(n,n)$ (resp. ${\bf H}_n:=U(n,n)$) be the quasi-split even unitary similitude group (resp. even unitary group) defined in Section \ref{sec:unitary-groups}. Let $\psi$ be an Arthur parameter for ${\bf G}_n(F)$ or ${\bf H}_n(F)$ and $\phi_{\psi}$ be its corresponding Langlands parameter defined in Subsection \ref{s:generic Arthur}. Let $\Pi(\phi_{\psi})$ denote the $L$-packet associated with $\phi_{\psi}$, as defined in the generic case in Definition \ref{Def:generic L-packet} and in the general case in Definition \ref{def_BC}. (See \cite{A82} for more details such as relation between an $L$-packet $\Pi(\pi_{\psi})$ and an Arthur packet $\Pi(\psi)$)
\begin{theorem}[Theorem \ref{StrongConjG} and \ref{StrongConjG:U}]\label{T1:intro}
If the $L$-packet $\Pi(\phi_{\psi})$ for ${\bf G}_n(F)$ (resp. ${\bf H}_n(F)$) has a generic member, then $\Pi(\phi_{\psi})$ is a tempered $L$-packet for ${\bf G}_n(F)$ (resp. ${\bf H}_n(F)$).
\end{theorem}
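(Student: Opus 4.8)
The plan is to use the Langlands classification together with the standard module conjecture to turn the statement into a combinatorial incompatibility between genericity and a non-trivial Arthur $\SL_2$, and then to detect the relevant reducibility either through the base-change description of the $L$-packets or, equivalently, through the reducibility points governed by the $L$-function identity that is the main theorem of the paper. First, the case of ${\bf H}_n = U(n,n)$ follows from that of ${\bf G}_n = GU(n,n)$: $L$-packets, the existence of a generic member, and temperedness are all compatible with the inclusion $U(n,n)(F)\subset GU(n,n)(F)$ (a Whittaker functional and the tempered growth condition restrict and co-restrict, and each $L$-packet for $U(n,n)(F)$ occurs in the restriction of one for $GU(n,n)(F)$), so I may assume $\pi\in\Pi(\phi_{\psi})$ is generic for ${\bf G}_n(F)$.

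Next I unwind Definition \ref{def_BC}. Write the Langlands data of $\phi_{\psi}$ as a triple $(P = MN,\phi_M,\nu)$, with $\phi_M$ a tempered $L$-parameter of the Levi $M$ and $\nu$ in the open positive cone; then every member of $\Pi(\phi_{\psi})$ is a Langlands quotient $J(P,\sigma\otimes\nu)$ of the standard module $I(P,\sigma\otimes\nu)$, with $\sigma$ ranging over the tempered $L$-packet $\Pi(\phi_M)$ of $M$. Since $\pi$ is generic, $\sigma$ must be the (unique) generic member $\sigma_0$ of $\Pi(\phi_M)$, which exists by Shahidi's conjecture for tempered $L$-packets (established above); and the standard module conjecture for ${\bf G}_n$, which is available thanks to the $L$-function identity proved in the earlier sections (the argument of Heiermann--Mui\'{c} goes through), forces $I(P,\sigma_0\otimes\nu) = \pi$ to be irreducible.

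The heart of the argument is to contradict this irreducibility whenever $\psi$ is non-trivial on the Arthur $\SL_2$. Write $\psi = \bigoplus_i \rho_i\boxtimes[a_i]\boxtimes[b_i]$, with $[d]$ the $d$-dimensional irreducible representation of $\SL_2(\C)$; non-temperedness of $\phi_{\psi}$ means $b_{i_0}\ge 2$ for some $i_0$. Since $\phi_{\psi}(w) = \psi(w,\mathrm{diag}(|w|^{1/2},|w|^{-1/2}))$, this block contributes to $\phi_{\psi}$ the summands $\rho_{i_0}\boxtimes[a_{i_0}]$ twisted by $|\cdot|^{(b_{i_0}-1)/2 - j}$ for $0\le j\le b_{i_0}-1$, two consecutive of which differ by exactly $1$. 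Passing to the base-change description, $\pi = I(P,\sigma_0\otimes\nu)$ is mapped to an irreducible representation of $\GL_{2n}(E)\times\GL_1(F)$ whose $\GL_{2n}(E)$-parameter is (the base change of) $\phi_{\psi}$; because base change commutes with parabolic induction and $\pi$ is fully induced, this representation is a full induced representation $\mathrm{Ind}(\cdots)$ from essentially square-integrable data, hence its general linear segments are pairwise unlinked in the Bernstein--Zelevinsky sense. But the $b_{i_0}$-block provides two such segments differing by exactly $1$, which are linked -- a contradiction. Equivalently, one may run this directly on ${\bf G}_n$: writing $\sigma_0 = \delta_1\otimes\cdots\otimes\delta_r\otimes\tau_0$ (discrete series $\delta_j$ of general linear groups, tempered $\tau_0$ of a smaller even unitary similitude group) and $\nu = (\nu_1,\dots,\nu_r)$, the $b_{i_0}$-block forces either a linked pair among the $\delta_j|\det|^{\nu_j}$ or an exponent $\nu_j\in\{1/2,1\}$ that is exactly a reducibility point of the rank-one induced representation $\mathrm{Ind}_{\GL\times{\bf G}_m}(\delta_j|\det|^{\nu_j}\otimes\tau_0)$ -- reducibility points which the main theorem (equality of Langlands--Shahidi and Artin $L$-functions, hence of the corresponding Rankin--Selberg and $\Asai$ $L$-functions) makes explicit -- and either way $I(P,\sigma_0\otimes\nu)$ is reducible. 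Hence $\psi$ is trivial on the Arthur $\SL_2$, $\phi_{\psi}$ is tempered, and $\Pi(\phi_{\psi}) = \Pi(\psi)$ is a tempered $L$-packet.

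The main obstacle is precisely this last step: translating the combinatorics of the Arthur $\SL_2$-blocks of $\psi$ into the assertion that the Langlands data of $\phi_{\psi}$ can never be ``reduced'' when $\psi$ is non-tempered (i.e. that the associated standard module is genuinely reducible). This requires the full strength of the $L$-function computations of the earlier sections -- it is exactly where the main theorem is used -- together with some care about the conjugate-self-dual types that occur for unitary (similitude) groups and about the compatibility of the constructions of Definition \ref{def_BC} with parabolic induction and with the formation of $L$-parameters; a lesser point to be checked along the way is the standard module conjecture for $GU(n,n)$ itself.
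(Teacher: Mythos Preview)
Your approach is a valid alternative, but it is genuinely different from the paper's. The paper does \emph{not} argue via the standard module conjecture and segment combinatorics. Instead it proceeds in three decoupled steps: (i) it first establishes the \emph{weak} version --- that $\phi_{\psi}$ is a tempered parameter --- as a direct consequence of the $L$-function identity (Theorem~\ref{Langlands parameter}) together with \cite[Theorem 5.1]{S11}; (ii) it then shows that the generic member $\pi$ itself is tempered by exhibiting its base-change lift $\Pi$ on $\GL_{2n}(E)\times\GL_1(E)$ (which is tempered because ${\rm BC}\circ\phi_{\psi}$ is) and invoking the ``if'' direction of Lemma~\ref{TT}; (iii) finally it spreads temperedness to the whole packet via the $L$-function pole argument of Lemma~\ref{lem:tempered $L$-packet}. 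The case of ${\bf H}_n$ is not reduced to ${\bf G}_n$ as you do, but proved in parallel (Theorem~\ref{StrongConjG:U}) using the same ingredients, which are already available for unitary groups. Your route --- standard module conjecture plus a direct contradiction from Arthur $\SL_2$-blocks --- is the strategy of \cite{Ban06, L11, JL14, HeK17, K21}; it is more hands-on with reducibility and buys you the weak and strong versions at once, whereas the paper's route is more modular, treating \cite{S11} as a black box and isolating the remaining work in two short lemmas about $L$-functions.

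One word of caution about your base-change variant of the contradiction: with the paper's definition of ${\rm BC}$ (Definition~\ref{def_BC}, via $L$-parameters and Langlands quotients on the $\GL$-side), it is not clear that ${\rm BC}$ takes an irreducible full standard module on ${\bf G}_n$ to the \emph{full} induced representation on $\GL_{2n}(E)$ rather than to its Langlands quotient; the latter is always irreducible, so the linked-segment contradiction would evaporate. Your ``direct on ${\bf G}_n$'' version, locating either a linked pair among the $\delta_j|\det|^{\nu_j}$ or a genuine rank-one reducibility exponent, is the one that actually carries the argument and is where, as you correctly flag, the $L$-function identity is doing the real work.
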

Note that Theorem \ref{T1:intro}, i.e, a strong version of the generic Arthur packet conjecture can be considered a local version of the generalized Ramanujan conjecture.

Let us briefly explain the main ideas of the proof of Theorem \ref{T1:intro}. The proof consists of three steps. The first step is to show the equality of the Langlands-Shahidi $L$-functions and Artin $L$-functions through the local Langlands correspondence, i.e., the generic local Langlands correspondence for ${\bf G}_n$.

\begin{theorem}[Theorem \ref{Langlands parameter}]\label{T3:intro}
Let $\pi$ be an irreducible admissible generic representation of ${\bf G}_n(F)$. Then, there exists a Langlands parameter $\phi_{\pi}$ such that for any irreducible admissible generic representation $\rho$ of ${\rm GL}(F)$, we have the following:
$$L(s, \rho \times \pi) = L(s, \phi_{\rho} \otimes \phi_{\pi}) \textit{\ \ and \ \ } \gamma(s, \rho \times \pi, \psi_F) = \gamma(s, \phi_{\rho} \otimes \phi_{\pi}, \psi_F),$$
where $\phi_{\rho}$ is the Langlands parameter that corresponds to $\rho$ through the local Langlands correspondence for $GL$ \cite{HT01, H00}.
\end{theorem}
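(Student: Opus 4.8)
The plan is to construct $\phi_\pi$ via the classical local Langlands correspondence for $\mathrm{GL}$ applied to a base change of $\pi$, and then to pin down the crucial compatibility with the Langlands--Shahidi $L$- and $\gamma$-factors by a global argument. First I would observe that $\mathbf{G}_n = GU(n,n)$ sits in an exact sequence relating it to $U(n,n)$ and $\mathrm{GL}_1$, and that the dual group ${}^L\mathbf{G}_n$ admits a natural embedding into $\mathrm{GL}_{2n}(\mathbb{C}) \times \mathrm{GL}_1(\mathbb{C})$ (twisted by the Galois action through the quadratic extension $E/F$ defining the unitary group); concretely, an $L$-parameter for $\mathbf{G}_n$ is essentially a conjugate-self-dual (up to the similitude twist) representation of the Weil--Deligne group of dimension $2n$ together with a compatible similitude character. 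So the target is clear: I must produce, for each irreducible admissible generic $\pi$, such a parameter $\phi_\pi$, and then verify the displayed identities for the Rankin--Selberg-type $L$- and $\gamma$-factors against arbitrary generic $\rho$ of $\mathrm{GL}_m(E)$ (here ``$\mathrm{GL}$'' should be read as a general linear group over the relevant field — I would make the index and field explicit).

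The main engine is the Langlands--Shahidi method together with functoriality. The strategy I would follow is the now-standard one (as in Shahidi's work on generic representations, and Kim--Krishnamurthy's earlier papers on classical similitude groups): (i) realize $\pi$ as the generic constituent of a representation induced from a maximal parabolic of a larger (similitude) group — e.g. inside $GU$ of larger rank, or via a Rankin--Selberg / Asai set-up — so that the relevant completed $L$-function appears in the constant term and the crude functional equation of an Eisenstein series, giving the Langlands--Shahidi $\gamma$- and $L$-factors as local components of global objects; (ii) globalize: choose a number field $k$ with a place $v_0$ where $k_{v_0} = F$, a totally imaginary quadratic extension, and a cuspidal globally generic automorphic representation $\Pi$ of the global similitude group whose local component at $v_0$ is $\pi$ and which is unramified (or otherwise controlled) elsewhere — this uses a Poincaré-series / simultaneous approximation argument of the kind established by Shahidi and refined by Henniart--Cogdell and others; (iii) apply the known functorial transfer (stable base change / descent for unitary groups, due to Mok and Kaletha--Minguez--Shin--White in the group case, together with the similitude bookkeeping) to move $\Pi$ to an automorphic representation of $\mathrm{GL}_{2n}(\mathbb{A}_E) \times \mathrm{GL}_1$, whose local parameter at $v_0$ gives the candidate $\phi_\pi$; (iv) at all unramified places the equality of $L$- and $\gamma$-factors is the Satake identification, and at the finitely many bad places one runs a multiplicativity + stability argument — multiplicativity of $\gamma$-factors on both sides reduces to the case of supercuspidal $\rho$, and stability of $\gamma$-factors under highly ramified twists forces the local equality at $v_0$ once it is known everywhere else. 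Combining these, both the $L$-factor and $\gamma$-factor identities at $v_0$ follow, and dividing $\gamma$ by $L$ controls the $\varepsilon$-factor as well.

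The step I expect to be the genuine obstacle is (iii)–(iv): transporting the global automorphic representation through functoriality in a way that is simultaneously compatible with the \emph{similitude} structure (the $\mathrm{GL}_1$-factor / central character twist must be tracked precisely, since $GU(n,n)$ is not semisimple and its $L$-group is a nontrivial extension), and establishing that the resulting local parameter at the distinguished place is independent of the global choices. Controlling ramification at the auxiliary bad places — ensuring one can choose the globalization so that those places contribute nothing new, or are handled by stability — is the technical heart. I would isolate this as the key lemma: local-global compatibility of base change for $GU(n,n)$ at all places, reducing it to the already-established case of $U(n,n)$ plus an analysis of the central torus, and then invoke the Langlands--Shahidi local coefficients and their stability (Shahidi's stability results, extended to this setting) to nail down $\phi_\pi$ at $F$ uniquely. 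The uniqueness of $\phi_\pi$ then follows from the strong multiplicity-one-type rigidity of $L$- and $\gamma$-factors: a parameter is determined by its Rankin--Selberg $\gamma$-factors against all generic $\rho$.
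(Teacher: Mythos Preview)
Your overall instinct --- reduce to $U(n,n)$ via restriction plus a central-character/similitude bookkeeping, and use base change to $\mathrm{GL}_{2n}(E)$ to produce the parameter --- matches the paper's, but the execution is quite different and your plan carries an unnecessary global burden along with one genuine gap.

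The paper does not run a fresh globalization-plus-stability argument. For \emph{supercuspidal} $\pi$ it simply restricts to $H=U(n,n)$, picks the generic constituent $\pi'$, and invokes the already-established local lift $\Pi'=\rho_1\boxplus\cdots\boxplus\rho_k$ of $\pi'$ to $\mathrm{GL}_{2n}(E)$ from Kim--Krishnamurthy (which is packaged as a black box; the global work is hidden there). The parameter $\phi_{\pi'}$ for $H$ then comes directly from Mok's Lemma~2.2.1 applied to the conjugate-self-dual $\phi_{\Pi'}$, and $\phi_\pi$ is the parameter for $G$ associated to the pair $(\phi_{\pi'},\bar\omega_\pi)$ via the explicit recipe relating $L$-parameters of $G$ to pairs (parameter for $H$, character of $E^\times$). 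The $L$- and $\gamma$-identities are then read off from the \emph{definition} of the local lift together with a comparison of the $GU$ and $U$ Langlands--Shahidi factors (done once via the unramified computation and multiplicativity). No stability argument is used or needed.

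The gap in your plan is the non-supercuspidal case. You propose to globalize an arbitrary generic $\pi$ to a cuspidal automorphic representation and run the same machine; but one cannot in general realize a non-supercuspidal local representation as the local component of a \emph{cuspidal} automorphic form, so steps (ii)--(iv) as written do not go through for such $\pi$. The paper avoids this entirely: once $\phi_\pi$ is built for supercuspidals, it passes to discrete series and then to arbitrary generic $\pi$ by Heiermann's purely local construction of parameters from the supercuspidal support (via poles of the Harish-Chandra $\mu$-function) combined with the Langlands classification, and the equality of local factors follows from Heiermann's compatibility theorem for these constructions. If you want to salvage your approach, you should separate out the supercuspidal case (where globalization is legitimate) and then bootstrap locally; but at that point you are essentially reproducing the paper's argument, and the stability step becomes superfluous.
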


The second step is to prove a weak version of Theorem \ref{T1:intro}—namely, Theorem \ref{T4:intro}—which follows from an application of \cite[Theorem 5.1]{S11} together with the first step. In \cite[Theorem 5.1]{S11}, the third author proved the weak version of the generic Arthur packet conjecture for any (quasi-split) ${\bf G}$, assuming the equality of the Langlands-Shahidi and Artin L-functions for ${\bf G}$. Therefore, our main theorem, i.e., Theorem \ref{T3:intro} implies the following conjecture for ${\bf G}_n$:
\begin{theorem}[Theorem \ref{wConjG:GU}]\label{T4:intro}
If the $L$-packet $\Pi(\phi_{\psi})$ has a generic member, then $\phi_{\psi}$ is a tempered $L$-parameter.   
\end{theorem}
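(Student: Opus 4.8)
The goal is to prove Theorem \ref{T4:intro}: if the $L$-packet $\Pi(\phi_\psi)$ attached to an Arthur parameter $\psi$ has a generic member, then $\phi_\psi$ is a tempered $L$-parameter. The key input is Theorem \ref{T3:intro}, which establishes the generic local Langlands correspondence for $\mathbf{G}_n = GU(n,n)$: for an irreducible admissible generic representation $\pi$ of $\mathbf{G}_n(F)$, there is a parameter $\phi_\pi$ such that the Langlands–Shahidi $L$- and $\gamma$-factors $L(s,\rho\times\pi)$, $\gamma(s,\rho\times\pi,\psi_F)$ agree with the corresponding Artin factors $L(s,\phi_\rho\otimes\phi_\pi)$, $\gamma(s,\phi_\rho\otimes\phi_\pi,\psi_F)$ for all generic $\rho$ on $GL$. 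The other key input is [S11, Theorem 5.1], which says: for a quasi-split group $\mathbf{G}$, once one knows the equality of Langlands–Shahidi and Artin $L$-functions, the weak generic Arthur packet conjecture holds. So the proof is essentially an assembly.

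**The plan.** First, I would recall that by hypothesis $\Pi(\phi_\psi)$ contains a generic member, say $\pi_0$. By Theorem \ref{T3:intro}, $\pi_0$ has an associated Langlands parameter $\phi_{\pi_0}$ satisfying the $L$- and $\gamma$-factor equalities against all generic $\rho$ on the relevant $GL$ factors. I would next verify that $\phi_{\pi_0}$ coincides (up to the equivalence relevant for $L$-packets) with $\phi_\psi$; this is built into the definition of the generic $L$-packet $\Pi(\phi_\psi)$ (Definition \ref{Def:generic L-packet}), where genericity pins down the parameter via the Langlands–Shahidi factors. Then I would invoke [S11, Theorem 5.1] in the form quoted just before the statement: the equality of Langlands–Shahidi and Artin $L$-functions for $\mathbf{G}_n$—which is exactly the content of Theorem \ref{T3:intro}—implies that the parameter of a generic member of an Arthur packet's associated $L$-packet is tempered. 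Applying this with $\mathbf{G} = \mathbf{G}_n$ gives that $\phi_\psi = \phi_{\pi_0}$ is a tempered $L$-parameter, which is the conclusion.

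**Why the machinery works.** The mechanism behind [S11, Theorem 5.1] that I would want to make visible (even if only cited) is the following: an Arthur parameter $\psi\colon W_F\times \mathrm{SL}_2\times\mathrm{SL}_2 \to {}^L\mathbf{G}$ gives rise to $\phi_\psi$ by restricting along the diagonal $\mathrm{SL}_2\hookrightarrow$ the Arthur $\mathrm{SL}_2$; temperedness of $\phi_\psi$ is equivalent to the Arthur $\mathrm{SL}_2$ acting trivially. If $\pi_0$ is generic and lies in $\Pi(\phi_\psi)$, then on one hand its Langlands–Shahidi $L$-functions $L(s,\rho\times\pi_0)$ are holomorphic in the expected right half-plane (a property of generic representations, coming from the theory of the associated Eisenstein series and the location of poles of intertwining operators), while on the other hand, through Theorem \ref{T3:intro}, these equal the Artin $L$-functions $L(s,\phi_\rho\otimes\phi_\psi)$; a nontrivial Arthur $\mathrm{SL}_2$ would force poles further to the right than genericity permits, upon choosing $\rho$ appropriately (e.g. related to the standard or a suitable functorial piece). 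This contradiction forces the Arthur $\mathrm{SL}_2$ to be trivial, i.e. $\phi_\psi$ tempered.

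**Main obstacle.** Strictly speaking, within this excerpt the proof is a short deduction: given Theorem \ref{T3:intro} and [S11, Theorem 5.1], there is little left to do. The only genuine subtlety—and hence the step I would treat most carefully—is the compatibility of normalizations: one must check that the $L$-packet $\Pi(\phi_\psi)$ as defined in Definition \ref{Def:generic L-packet} is precisely the object to which [S11, Theorem 5.1] applies, i.e., that ``has a generic member'' in the present sense matches the hypothesis of that theorem, and that the $GL$ factors $\rho$ appearing in Theorem \ref{T3:intro} are rich enough (in particular that the relevant Rankin–Selberg/Asai-type $L$-functions used in [S11] are among those covered). Since $\mathbf{G}_n = GU(n,n)$ has a Siegel Levi with $GL$-factor, the Langlands–Shahidi $L$-functions that arise are exactly of the type controlled by Theorem \ref{T3:intro}, so this compatibility should go through without difficulty; I would state it explicitly and then conclude.
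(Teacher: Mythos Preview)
Your proposal is correct and follows exactly the paper's approach: the paper's proof of Theorem~\ref{wConjG:GU} is the single sentence that \cite[Theorem 5.1]{S11} together with Theorem~\ref{Langlands parameter} (your Theorem~\ref{T3:intro}) yields the result. Your additional discussion of the mechanism behind \cite{S11} and of compatibility of normalizations is sound elaboration, but the paper itself records none of it.
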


The third step of the proof is to strengthen the above theorem using several properties of $L$-packets. To strengthen it, we first prove that being tempered is preserved through local functorial lift (Lemma \ref{TT}). Note that `if' part of the lemma, that is, if the local functorial lift of an irreducible admissible representation $\pi$ of ${\bf G}_n(F)$ is tempered, then $\pi$ is tempered, is essential in the proof of the theorem. Second, we further prove that tempered property of $L$-packets, i.e., if one member in an $L$-packet is tempered and generic, then all others are tempered (Lemma \ref{lem:tempered $L$-packet}).

Note that in the description of Theorem \ref{T3:intro} and \ref{T4:intro}, we need to specifically describe what $L$-packets mean. The second main purpose of the paper is to describe $L$-packets for $\G_n(F)$ and its corresponding local $L$-functions so that Theorem \ref{T1:intro} becomes completely unconditional. In the case of unitary groups, Mok explicitly describes the Arthur packets, which include the definition of tempered $L$-packets \cite[Theorem 2.5.1(b)]{M15}. We can further define $L$-packets in the general case for ${\bf H}_n(F)$ following the Langlands classification (See \cite{A82} or Section \ref{Removing A}). We use this construction to construct $L$-packets for ${\bf G}_n$(F). More precisely, we first construct the global Base change lifts for ${\bf G}_n(F)$ using the restriction method (Theorem \ref{sbc:non-generic:GU}). With this structure, we define $L$-packets as follows (Definition \ref{Def_L-packets}):

\begin{definition}
Let $\pi_0$ be an irreducible admissible representation of ${\bf G}_n(F)$. Then we define a local $L$-packet that contains $\pi_0$ as a set of irreducible admissible representations $\pi$ of ${\bf G}_n(F)$ such that 
\begin{enumerate}
    \item $\omega_{\pi} = \omega_{\pi_0},$
    \item ${\rm BC}(\pi) \cong {\rm BC}(\pi_0),$ 
\end{enumerate}
where ${\rm BC}(\pi)$ is defined in Definition \ref{def_BC} according to construction in Theorem \ref{sbc:non-generic:GU}.
\end{definition}

\begin{remark}
    The above definition of $L$-packets, together with Definition \ref{def_BC} and Theorem \ref{sbc:non-generic:GU} provide a new definition of Rankin product $L$-functions for ${\bf G}_n(F)$ in the non-generic case (Definition \ref{Lfunction_GU}). 
\end{remark}

A natural question is whether our definition of $L$-packets satisfies expected properties such as Shahidi's conjecture and compatibility with the Langlands classification. Toward the end of the paper, we establish these main properties.

\begin{proposition}[Proposition \ref{property_Lpacket_GU}]
\begin{enumerate}
    \item An $L$-packet for ${\bf G}_n(F)$ consists of finite members.
    \item A tempered $L$-packet contains a generic representation.
    \item $L$-funtions follow the Langlands classification.
    \item $L$-functions defined in Definition \ref{Lfunction_GU} agree with $L$-functions from the Langlands-Shahidi method in the generic case. 
\end{enumerate}    
\end{proposition}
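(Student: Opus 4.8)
**The plan is to establish each of the four properties by reducing to the corresponding known results for $U(n,n)$ via the base change construction of Theorem~\ref{sbc:non-generic:GU}, exploiting the fact that $L$-packets for ${\bf G}_n(F)$ are defined precisely as fibers of $\mathrm{BC}$ cut out by a central character condition.** For part (1), finiteness, I would observe that the restriction of an irreducible admissible representation of ${\bf G}_n(F)$ to ${\bf H}_n(F)$ decomposes into finitely many irreducible constituents (a standard consequence of $[{\bf G}_n:{\bf H}_n\cdot Z]$-type finiteness, cf.\ the Gelbart--Knapp theory of restriction to subgroups with abelian quotient). Since two members of an $L$-packet for ${\bf G}_n(F)$ have isomorphic base change, their restrictions to ${\bf H}_n(F)$ lie in a single $L$-packet for ${\bf H}_n(F)$ (or a controlled union of such, indexed by the finite group of the relevant Galois/character twists), and Mok's description \cite[Theorem~2.5.1]{M15} guarantees those packets are finite; combined with the finiteness of the set of extensions of a fixed restriction compatible with a fixed central character $\omega_\pi$, this bounds the $L$-packet for ${\bf G}_n(F)$.

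For part (2), that a tempered $L$-packet contains a generic member: a tempered $L$-packet for ${\bf G}_n(F)$ base-changes to a tempered $L$-packet (or generic-containing packet) for the general linear group, and by Mok's work the corresponding $U(n,n)$-packet is tempered and hence, by \cite[Theorem~2.5.1(b)]{M15} (Shahidi's conjecture for unitary groups), contains a generic representation $\sigma$. I would then lift genericity back up: if $\sigma$ is a generic constituent of $\pi|_{{\bf H}_n(F)}$ for some $\pi$ in the ${\bf G}_n(F)$-packet, then $\pi$ itself is generic, since a Whittaker functional for ${\bf H}_n(F)$ (with respect to a suitable character) extends to one for ${\bf G}_n(F)$ — the unipotent radical and the relevant generic character are shared, and the similitude torus acts on the Whittaker model in a way that poses no obstruction. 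One must check the compatibility of Whittaker data across the two groups, which is where Lemma~\ref{TT} and the genericity bookkeeping from the earlier sections enter.

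For parts (3) and (4) I would argue as follows. Part (4): in the generic case, $\mathrm{BC}(\pi)$ agrees with the Langlands--Shahidi base change (this is the content of the earlier generic correspondence, Theorem~\ref{Langlands parameter} / Theorem~\ref{T3:intro}), so the $L$-function of Definition~\ref{Lfunction_GU}, being defined as $L(s,\rho\times\mathrm{BC}(\pi))$ on the ${\rm GL}$ side, literally coincides with $L(s,\rho\times\pi)$ defined by the Langlands--Shahidi method, using the equality $L(s,\rho\times\pi)=L(s,\phi_\rho\otimes\phi_\pi)$ established there. Part (3): for a general (non-tempered) $\pi$, write its Langlands quotient data — $\pi$ is the Langlands quotient of a standard module induced from $\pi_{\mathrm{temp}}\otimes\nu$ on a parabolic — and one must show that $\mathrm{BC}$ intertwines the Langlands classification for ${\bf G}_n(F)$ with that for ${\rm GL}$, i.e.\ $\mathrm{BC}(\pi)$ is the Langlands quotient of the induced representation built from $\mathrm{BC}(\pi_{\mathrm{temp}})$ and the transferred exponents. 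This follows because base change commutes with parabolic induction and preserves temperedness (the tempered case being handled by the construction in Theorem~\ref{sbc:non-generic:GU} together with Lemma~\ref{TT}), so the multiplicativity of Langlands--Shahidi $L$-functions along parabolics matches the multiplicativity of Artin $L$-functions along the corresponding parabolic on the ${\rm GL}$ side.

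**The main obstacle I expect is part (3)**, specifically verifying that the base change map $\mathrm{BC}$ — which was constructed globally via a restriction argument rather than defined representation-theoretically through induction — is genuinely compatible with the Langlands classification locally, including the correct matching of the exponents (real parts of the inducing characters) and the tempered supports. The delicate point is that the restriction method produces $\mathrm{BC}(\pi)$ as an isobaric automorphic representation characterized by an equality of $L$- and $\varepsilon$-factors at almost all places together with the central character constraint, so identifying its local Langlands parameter with the "expected" one built from the parabolic data of $\pi$ requires a local-global argument (realizing a given local $\pi$ as a component of a global automorphic representation with controlled ramification elsewhere) plus strong multiplicity one / rigidity for isobaric representations of ${\rm GL}$. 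Once that compatibility is in hand, parts (1), (2), (4) are comparatively formal consequences of Mok's results and the generic theory already developed.
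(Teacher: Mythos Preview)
Your overall strategy---reduce each property to the corresponding known fact for ${\bf H}_n(F)$ via restriction and $\mathrm{BC}$---is exactly the paper's approach, and parts (1) and (4) are essentially as in the paper. Two points deserve correction, however.

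\textbf{Part (2) has a genuine gap.} You write ``if $\sigma$ is a generic constituent of $\pi|_{{\bf H}_n(F)}$ for some $\pi$ in the ${\bf G}_n(F)$-packet, then $\pi$ itself is generic''---which is true, but you never establish that such a $\pi$ exists in the packet. The generic member $\pi_g'$ of the $H$-packet produced by Shahidi's conjecture for $H$ is not \emph{a priori} a constituent of the restriction of any element of the given $G$-packet $\Pi$. The paper supplies the missing step: since $\pi_g'$ and $\pi_t'$ share the same central character (the restriction of $\omega_{\pi_t}$ to $Z\cap H$), the pair $(\pi_g',\omega_{\pi_t})$ defines a representation of the finite-index subgroup $ZH\subset G$, and Frobenius reciprocity then yields an irreducible generic representation $\pi_g$ of $G$ with central character $\omega_{\pi_t}$ whose restriction to $H$ contains $\pi_g'$. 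One then checks $\mathrm{BC}(\pi_g)=\mathrm{BC}^U(\pi_g')\otimes\bar{\omega}_{\pi_t}\cong\mathrm{BC}^U(\pi_t')\otimes\bar{\omega}_{\pi_t}=\mathrm{BC}(\pi_t)$, so $\pi_g\in\Pi$. Without this construction your argument is incomplete.

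\textbf{Part (3) is overcomplicated; the obstacle you anticipate does not arise.} You propose a local-global argument to show $\mathrm{BC}$ commutes with the Langlands classification, and flag the matching of exponents and tempered supports as delicate. The paper avoids all of this by a direct, elementary observation: restricting the standard module $\nu^{r_1}\tau_1\times\cdots\times\nu^{r_m}\tau_m\rtimes\pi_t$ from $G$ to $H$ yields $\nu^{r_1}\tau_1\times\cdots\times\nu^{r_m}\tau_m\rtimes(\pi_t|_{H_{n_t}})$, so some irreducible $\pi'\subset\pi|_H$ is the Langlands quotient of $\nu^{r_1}\tau_1\times\cdots\times\nu^{r_m}\tau_m\rtimes\pi_t'$ with $\pi_t'\subset\pi_t|_{H_{n_t}}$---the $\mathrm{GL}$ data and exponents are literally preserved under restriction of the Levi. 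Since $L(s,\sigma\times\pi)=L(s,\sigma\times\mathrm{BC}^U(\pi')\otimes\bar{\omega}_\pi)$ by Definition~\ref{Lfunction_GU}, and the $L$-functions for $H$ are already known to follow the Langlands classification, the multiplicative formula drops out immediately. No globalization or rigidity argument is needed.
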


\section{\bf Preliminaries} 
\label{sec:prelims}

\subsection{Unitary Similitude Groups }
\label{sec:unitary-groups}
For any non-archimedean local field ${\mathfrak k}$ of characteristic zero, we write ${\mathfrak o}_{\mathfrak k}$ to denote its ring of integers and $\mathfrak{p}_{\k}$ to denote the unique maximal ideal in ${\mathfrak o}_{\k}$. Let $\varpi_{\k}$ be a uniformizer in $\k$, that is, $\mathfrak{p}_{\mathfrak k} = \varpi_{\k} {\mathfrak o}_{\mathfrak k}$ and let $q_{\mathfrak k} := |{\mathfrak o}_{\mathfrak k}\backslash \mathfrak{p}_{\mathfrak k}|$ be the residual characteristic of ${\mathfrak k}$. We fix the absolute value $|\!|\cdot|\!|_{\mathfrak k}$ that satisfies $|\!|\varpi|\!|_{\mathfrak k}=q_{\mathfrak k}^{-1}$. Now fix $F$,  a non-archimedean local field of characteristic zero. Let $\overline{F}$ be a (separable) algebraic closure of $F$ and let $\Gamma$ denote the Galois group of $\overline{F}/F$. Let $E\subseteq \overline{F}$ be a quadratic extension of $F$ and let $\theta$ denote the corresponding non-trivial Galois automorphism. For $x\in E$, we write $\bar{x}$ to denote $\theta(x)$. We may write $E=F(\beta)$ for some $\beta\in E^{\times}$ satisfying $\bar{\beta}=-\beta$. Let $\psi: F \rightarrow \mathbb{C}$ be a non-trivial additive character of $F$ of level $0$, i.e., $\mbox{ker }\psi\subset {\mathfrak o}_F$ but $\p_F^{-1}\not\subset \mbox{ker }\psi$. Then $\psi_E: E \rightarrow \mathbb{C}$ given by
$\psi_E(x) := \psi({\rm Tr}_{E/F}(x))$ is a non-trivial additive character of $E$ which is trivial on ${\mathfrak o}_E$.
For any algebraic $F$-group $\bf{G}$, we write $G$ to denote  ${\bf G}(F)$, the corresponding group of $F$-points. In addition, given any field $K$ and an embedding $\tau: F\rightarrow K$, we write ${\bf G}\times_{F,\tau}K$ to denote the base change of ${\bf G}$ from $F$ to $K$ through $\tau$. If $K\supset F$ and $\tau=\iota$ is the inclusion map, then we also write ${\bf G}/_{K}$ to denote the group ${\bf G}\times_{F,\iota}K$ obtained by extension of scalars. We do not typeset matrix groups in ``boldface". For example, we write $\text{GL}_n, n\geq 1$, to denote the general linear group, $\text{B}_n$ for the Borel subgroup of upper triangular matrices in $\text{GL}_n$, and $\text{U}_n$ for the unipotent radical of $\text{B}_n$, and write their $F$-points explicitly as $\text{GL}_n(F), \text{B}_n(F)$, and $\text{U}_n(F)$, respectively.
For $n\geq 1$, let $J_n$ be the $n\times n$ matrix 
$J_n:=(a_{i,j})_{i,j}$ with $a_{i,j}=1$ whenever $i+j=n+1$, and $0$ otherwise. Put 
\[
J_{2n}'=\begin{pmatrix}& J_n\\-J_n&\end{pmatrix}.
\]

Let ${\bf G}_n := \text{GU}(n,n)$ be the quasi-split unitary similitude group in $2n$ variables defined with respect to $E/F$ and $J'_{2n}$ \cite{R92}. Here, $\beta J'_{2n}$ must be viewed as the underlying Hermitian form. The group is the $F$-form of $\GL_{2n}\times \GL_1$ determined by the 1-cocycle $(\phi_{\sigma})_{\sigma\in\Gamma}$, where 
\[
\phi_{\sigma}(g,\eta):=\left\{\begin{array}{ll}
(g,\eta)
&
\mbox{ if }\sigma|_{E}=1,\\
(\eta (J'_{2n})^{-1}({}^{t} g^{-1})J'_{2n},\eta)
&
\mbox{ if }\sigma|_{E}=\theta.
\end{array}\right.
\]

The Galois action on $\G_n(\overline{F})=\GL_{2n}(\overline{F})\times \GL_1(\overline{F})$ is the twisted Galois action $(g,\eta)\mapsto \phi_{\sigma}(\sigma(g,\eta))$. For any $F$-algebra ${\mathcal A}$,
\begin{equation}\label{gu}
{{\bf G}_n}({\mathcal A})=\left\{g\in \GL_{2n}(E\otimes_{F} {\mathcal A}): {}^t\bar{g}J'_{2n}g=\eta(g)J'_{2n}, \text{ for some } \eta(g) \in {\mathcal A}^{\times}\right\},
\end{equation}
where $g\mapsto \bar{g}$ denotes the action on $E\otimes_F{\mathcal A}$ obtained from the action of $\theta$ on $E$ by linear extension. This defines a homomorphism $\eta: {\G}_n\longrightarrow {\mathbb G}_m$
of $F$-groups called the {\it similitude} character. The kernel of this homomorphism is the usual (quasi-split) unitary group ${\mathbf H}_n:={\rm U}(n,n)$. We fix the $F$-Borel subgroup {\bf B} of $\G_n$ consisting of upper triangular matrices and write ${\bf B=TU}$, where ${\bf T}$ is a maximal $F$-torus of ${\bf G}_n$ and $\bf U$ is the unipotent radical of ${\bf B}$ consisting of upper triangular unipotent matrices. Their $F$-points are given by 
\[
T = \left\{t=diag(x_1, \cdots, x_n, y_1, \cdots, y_n) | x_i, y_i\in E^{\times}, x_{n-i+1}\overline{y}_i=\eta\in F^{\times}, 1\leq i\leq n,
\right\}
\]
(here $\eta=\eta(t)$) and 
\begin{equation}\label{unip}
U=\left\{n(u,v)=\begin{pmatrix}
u&v\\&\bar{u}^{\star}
\end{pmatrix}; u\in \text{U}_n(E), v\in \text{M}_{n\times n}(E), J_nu^{-1}v={}^t\bar{v}{}^t\overline{u}^{-1}J_n\right\},
\end{equation}
where, for any $n\times n$ matrix, we use $g^{\star}$ to denote $g^{\star}=J_n^{-1}({}^t{g}^{-1})J_n$.
Let ${\bf A}_{0}$ be the maximal $F$-split subtorus of ${\bf T}$, then its $F$-points are given by
$${\rm A}_{0} = \left\{ diag(x_1, \cdots, x_n, y_1, \cdots, y_n)
| x_i ,y_i\in F^{\times}\mbox{ satisfying }x_{n-i+1}y_i=\eta, 1\leq i\leq n\right\}.
$$

The groups ${\mathbf H}_n\subset {\mathbf G}_n$ share the same derived group and there is an exact sequence 
$$
1\longrightarrow {\mathbf H}_n\longrightarrow {\mathbf G}_n \overset{\eta}{\longrightarrow} {\rm GL}_1\longrightarrow 1.
$$
Let $Z=Z_n$ denote the center of the group $G:={\bf G}_n(F)$. It is given by 
$$
diag(z,\cdots,z)\in \GL_{2n}(E), \eta=z\bar{z},
$$
and its intersection $Z_H$ with $H:={\mathbf H}_n(F)$ consists of all $z\in E^{\times}$ satisfying $z\bar{z}=1$. It follows that $\eta$ induces the isomorphism 
\begin{equation}
\label{eqn-quotient}
G/Z H \cong F^{\times}/N_{E/F}(F^{\times}).
\end{equation}

Let $\widetilde{\Sigma}$ denote the set of (absolute) roots of ${\bf G}_n$ with respect to ${\bf T}$. The choice of ${\bf B}$ then determines a set of positive roots $\widetilde{\Sigma}^{+}$ and let $\widetilde{\Delta}$ denote the corresponding set of simple roots. Let $\widetilde{\Sigma}|_{{\bf A}_{0}}=\Sigma$, $\widetilde{\Sigma}^{+}|_{{\bf A}_{0}}=\Sigma^{+}$ and $\widetilde{\Delta}|_{{\bf A}_{0}}=\Delta$. Then $\Sigma$ is the set of roots of ${\bf G}_n$ relative to ${\bf A}_{0}$ and $\Delta$ is the corresponding set of simple roots. The elements of $\Sigma$ are called {{\it relative roots}} of ${\bf G}_n$ (with respect to ${\bf A}_{0}$). (See \cite[\S 21]{B91}.) The group $\Gamma$ acts on $\widetilde{\Sigma}$ and $\tilde{\Delta}$ with $\mbox{Gal}(\overline{F}/E)$ acting trivially. Thus we get an action of $\mbox{Gal}(E/F)$ on $\widetilde{\Sigma}$ and $\widetilde{\Delta}$ partitioning them into finitely many $\mbox{Gal}(E/F)$-orbits; these orbits are in one-to-one correspondence with elements in $\Sigma$ and $\Delta$, respectively. The root system determined by $\widetilde{\Sigma}$ is of type ${}^2A_{2n-1}$ and the root system determined by $\Sigma$ is of type $C_n$. 

For a field extension $F'\supset F$, let us describe ${\bf G}_n(F')$ explicitly. Since $E$ is separable, $E\otimes_{F}F'$ is either (i) a field containing an isomorphic copy of $E$, or (ii) a product (direct) of two fields each containing an isomorphic copy of $E$. In case (i), the group ${\bf G}_n(F')$ is given as in (\ref{gu}). In case (ii), $E\otimes_F F'=F'\times F'$ with associated embeddings $\iota_1,\iota_2$ from $E\longrightarrow F'$, where the $i^{\rm th}$ copy of $F'$ in this product corresponds to $\iota_i$, $i=1,2$. The non-trivial Galois action is then given by swapping two factors, i.e., $(x,y)\mapsto (y,x)$. Consequently
\begin{eqnarray}
\G_n(F')&=&\left\{(g,g')\in \GL_{2n}(F'\oplus F'): {}^tg'J'_{2n}g=\eta J'_{2n}, \eta \in {F'}^{\times}\right\}\nonumber \\
&\cong&\left\{(g,\eta)\in \GL_{2n}(F')\times\GL_1(F')\nonumber\right\}.
\end{eqnarray}
Choosing a distinguished embedding $\iota_1$, we see that ${\bf G}_n\times_{F,\iota_1}{F'}\cong \GL_{2n}\times \GL_1$ via projection onto the first factor. In particular, ${\bf G}_n/_{E}\cong \GL_{2n}\times \GL_{1}$ determined by the identity map $\iota_1$. The maximal torus ${\bf T}/_{E}\subset {\bf G}_n/_{E}$ corresponds to the diagonal torus $(\GL_{1})^{2n}\times \GL_{1}$ under this isomorphism. Thus 
\begin{equation}\label{max-torus}
t\mapsto diag( x_1, x_2,\cdots, x_{2n})
\times\eta\in \GL_{2n}\times \GL_1
\end{equation}
represents a generic element of the maximal torus ${\bf T}/_{E}$. For $1\leq i\leq 2n$, let $e_i$ be the character of ${\bf T}$ that sends $t\mapsto x_i$. Let $e_{0}$ be the character that sends $t\mapsto \eta$. The group $X({\bf T})_E$ of $E$-rational characters of ${\bf T}$ is $\bigoplus_{i=1}^{2n}\Z e_i\oplus\Z e_{0}$ and the non-trivial Galois action is given by $e_{0}\mapsto e_{0}$, $e_{i}\mapsto e_{0}-e_{2n+1-i}$, $1\leq i\leq 2n$. Put $\widetilde{\alpha}_{i}:=e_i-e_{i+1}, 1\leq i\leq 2n-1$, then $\widetilde{\Delta}=\{\widetilde{\alpha}_1,\widetilde{\alpha}_2,\ldots, \widetilde{\alpha}_{2n-1}\}$ whose restriction to ${\rm A}_{0}$ gives $\Delta$, the corresponding set of simple relative roots. 
 
The set of all standard parabolic $F$-subgroups is in bijection with the set of all subsets $\Delta$ \cite[Proposition 21.12]{B91}.  We write $\Delta^{\bf M'}\subset \Delta \leftrightarrow {\bf P}'={\bf M}'{\bf N}'$ to denote this bijection, where ${\bf M}' \supset {\bf T}$ is the corresponding standard $F$-Levi subgroup and ${\bf N}'\subset {\bf U}$ is the unipotent radical of ${\bf P}'$. Here, $\Delta^{{\bf M}'}$ is the analogue of $\Delta$ obtained from replacing ${\bf G}_n$ by ${\bf M}$ in the definitions. The standard Levi $F$-subgroups ${\bf M}'$ are of the following shape: We use $R_{E/F}$ to denote the functor of restriction of scalars, and set ${\rm G}_0:={\rm GU}(0,0)=\GL_1$. Then for $\Delta^{{\bf M}'}\subset \Delta$, there are integers $n_1,\ldots,n_r>0$ and $m\geq 0$ so that $n_1+n_2+\ldots+n_r+m=n$, and 
\[
{\bf M}'\cong R_{E/F}\GL_{n_1}\times R_{E/F}\GL_{n_2}\times\cdots\times R_{E/F}\GL_{n_r}\times \text{\bf G}_m.
\] 
By loc.cit. any parabolic $F$-subgroup of ${\bf G}_n$ is conjugate to one and only one standard ${\bf P}'$ by an element of $G_n$.

\subsection{The Langlands-Shahidi method and relevant $L$-functions}
\label{sec-LS}

We consider the case ${}^2 A_{2n-1}-4$ in \cite{S88} in the context of unitary similitude groups and describe the resulting $L$-functions. (See \cite{KK05} where this case was studied for unitary groups.) Put  $\widetilde{\theta}=\widetilde{\Delta}-\{\widetilde{\alpha}_i,\widetilde{\alpha}_{n+i}\}$, $1\leq i\leq n-1$, and let $\theta\subset \Delta$ be the corresponding subset of simple relative roots. Then $\Delta\backslash \theta=\alpha_i$, where $\alpha_i$ is the restriction of $\widetilde{\alpha}_i$ to ${\bf A}_{0}$. Let ${\bf M}={\bf M}_{\theta}$ be the standard maximal Levi $F$-subgroup of ${\bf G}_n$ associated to $\theta$ under the above mentioned correspondence. Then ${\bf M}\cong R_{E/F}\GL_{k}\times {\bf G}_m$, $k+m=n$, with $k\leq n-1$. Let $\bf P\supset {\bf B}$ be the the maximal $F$-parabolic (standard) subgroup of $\bf G$ with Levi-component $\bf M$ and let $\bf N$ be the corresponding unipotent radical. Then as a subgroup of ${\bf G}_n(F)$,
\[
M=
\left\{m(g,h)=\left[
\begin{array}{c|c|c}
\eta_h g & & \\
\hline
 & h&\\\hline&& \bar{g}^{\star}
\end{array}
\right], g\in \GL_k(E), h\in {\bf G}_m(F)\right\};
\]
and 
 \[
{N} = 
\left\lbrace
\left[\begin{array}{c|c|c}
I_k & X&Y \\\hline 
 & I_{2m} &Z\\\hline &&I_k
\end{array}\right]
| ^t{}\bar{X}(-1)^kJ_{k}+J_{2m}Z=0, J_{k}Y+ ^t{}\bar{Y}(-1)^kJ_{k}= -^t{}\bar{Z}J_{2m}Z
\right\rbrace.
\]
There is a unique Weyl group element $\widetilde{w_{0}}$ (for ${\bf A}_{0}$ in ${\bf G}_n$) satisfying $\widetilde{w_{0}}(\theta)\subset \Delta$ while $\widetilde{w_{0}}(\alpha_n)<0$ which may be represented by the element 
\[
w_{0}:=\left[
\begin{array}{c|c|c}
 & &I_k \\
\hline
 & I_{2m}&\\\hline (-1)^kI_k&& 
\end{array}
\right].
\] 
Observe that $\bf P$ is self-associate, meaning, $w_{0}{\bf M}w_{0}^{-1}={\bf M}$. 

Next, for a field extension $F'\supset F$, let us write $\GL_k(E\otimes_F F')\times {\bf G}_m(F')$ as a subgroup of ${\G}_n(F')$. As in \S\ref{sec:unitary-groups}, there are two possibilities, (i) and (ii), for $E\otimes_{F}F'$. In case (i), the group ${\bf G}_n(F')$ is given as in (\ref{gu}) 
and 
$$ \GL_k(E\otimes_{F}F')\times {\bf G}_m(F')
\ni (g,h)\mapsto m(g,h)
\in {\G}_n(F').$$
In case (ii), $E\otimes_F F'=F'\times F'$ with associated embeddings $\iota_1,\iota_2$ from $E\longrightarrow F'$ and ${\bf G}_n/_{F'}\cong \GL_{2n}\times \GL_1$ via $\iota_1$. Then ${\bf M}(F')$ as a subgroup of $\GL_{2n}(F')\times \GL_1(F')$ is given by 
\begin{equation}\label{embed}
 (g_1,g_2, h, \eta)\mapsto \left(\left[\begin{array}{c|c|c}\eta g_1&&\\\hline & h&\\\hline &&g_2^{\star}\end{array}\right],\eta\right)\in \GL_{2n}(F')\times \GL_1(F'),
\end{equation}
where $g_1,g_2\in \GL_{k}(F'), h\in \GL_{2m}(F')$, and $\eta\in F'^{\times}$.

\subsubsection{$L$-groups and the adjoint action}
\label{sec:Lgps}
For any connected reductive group ${\bf G}$ defined over $F$, let $^L{}G=\widehat{G}\rtimes W_F$ be its $L$-group \cite{B79} with $\widehat{G}$ being its connected component. Here, $W_F$ is the Weil group of $F$ which is a dense subgroup of $\Gamma_F$. Its action on $\widehat{G}$ is inherited from the action of $\Gamma_F$ on $\widehat{G}$. Let $\Phi_n$ denote the $n\times n$ matrix $(\Phi_n)_{ij}=(-1)^{i+1}\delta_{i, n-j+1}$. Since ${\bf G}_n/_{\overline{F}}=\GL_{2n}\times \GL_1$, we have $\widehat{G}_n=\GL_{2n}(\C)\times\GL_1(\C)$. We identify the maximal torus $\widehat{T}\subset \widehat{G_n}$ with $(\C^{\times})^{2n}\times\C^{\times}$ as in (\ref{max-torus}). Let $\widehat{B}\supset \widehat{T}$ be the corresponding Borel subgroup of upper triangular matrices. We use the standard $F$-splitting $\{\widehat{B},\widehat{T},\{E_{i,i+1}\}\}$ with root vectors $\{E_{i,i+1}, 1\leq i\leq 2n-1\}$ and form the $L$-group $
^L{}G_n=\GL_{2n}(\C)\times \GL_1(\C)\rtimes \Gamma_F.$
The Galois action factors through $\text{Gal}(E/F)$ and the non-trivial Galois automorphism $\theta$ sends $
(g,\lambda)\mapsto (\Phi^{-1}_{2n}{}^t g^{-1}\Phi_{2n},\lambda\det(g))$. The embedding $\widehat{M}=\GL_k(\C)\times\GL_k(\C)\times \GL_{2m}(\C)\times \GL_1(\C)\hookrightarrow \widehat{G_n}$
is given by (\ref{embed}). For $(g_1,g_2,h,\eta)\in \widehat{M}$, the non-trivial Galois action interchanges $g_1\leftrightarrow g_2$, sends $h\mapsto h^{\star}$ and $\eta\mapsto \eta\det(h).$

Suppose $^L{}N$ is the $L$-group of ${\bf N}$ as defined in \cite{B79}, let  $^L{}\frak{n}$ denote the complex Lie algebra of $^L{}N$, then 
\[
^L{}{\mathfrak n}=\left\{\left(\begin{array}{c|c|c}{\bf 0}&X&Y\\\hline {\bf 0}&{\bf 0}&Z\\\hline{\bf 0}&{\bf 0}&{\bf 0}\end{array}\right); X\in \text{M}_{k\times 2m}(\C), Y\in \text{M}_{k\times k}(\C), Z\in M_{2m\times k}(\C)\right\}.
\]
Let $r$ denote the adjoint action of $\widehat{M}\rtimes W_F$ on $^L{}\mathfrak{n}$ -- an element $(g_1,g_2,h,\eta)\in \widehat{M}$ acts by sending 
\[
X\mapsto \eta g_1X h^{-1}, Y\mapsto \eta g_1Yg_2^{-1}, Z\mapsto hZ(g_{2}^{\star})^{-1};
\]
and the Galois action is given according to its action on the roots. For $s\geq 1$, let $\rho_s$ denote the standard representation of ${\rm GL}_s(\C)$ and $1_{s}$ the trivial representation of ${\rm GL}_s(\C)$. Then $r=r_1\oplus r_2,$
where 
\begin{eqnarray}
r_1|_{\widehat{M}}&=&\rho_k\otimes 1_k\otimes \tilde{\rho}_{2m}\otimes \rho_{1}\oplus 1_k\otimes\rho_k\otimes\rho_{2m}\otimes 1_{1}\nonumber\\
r_2|_{\widehat{M}}&=& \rho_k\otimes\rho_k\otimes 1_{2m}\otimes\rho_1\nonumber.
\end{eqnarray}

Let ${\rm As}$ denote the $n^2$ dimensional representation of 
${}^L{}(R_{E/F}\GL_n)=\GL_n(\C)\times \GL_n(\C)\rtimes \Gamma_F$
given by 
\[
\mbox{As}(x,y,\gamma)=\left\{ \begin{array}{ll}
x\otimes y, & \mbox{if $\gamma$ restricted to $E$ is trivial}\\
y\otimes x, & \mbox{if $\gamma$ restricted to $E$ is not
trivial.}  
\end{array}
\right.
\]
Then as a representation of ${}^L{}(R_{E/F}\GL_n)\times \C^{\times}$, $r_2={\rm As}\otimes \rho_1$.  Hence $r_2$ is referred to as the {\it twisted Asai representation}. 

\subsubsection{Local factors}
We continue with the pair $({\bf G}={\bf G}_n,{\bf P})$, ${\bf P}={\bf M}{\bf N}$, as above. The character $\psi$ defines a non-degenerate character $\psi^{G}$ of $U$ via 
\[
n(u,v)\mapsto \psi((u_{1,2}+\bar{u}_{1,2})+\cdots+(u_{n-1,n}+\bar{u}_{n-1,n})+v_{n,n+1}),
\]
where the matrix $n(u,v)$ is as in (\ref{unip}). It is compatible with the Weyl group representative $w_{0}$ in the sense that $
\psi^{G}(w_{0}^{-1}uw_{0})=\psi^{G}(u), u\in U\cap M.$
By a {\it generic} representation of $M$ (cf. \cite[Section 3]{S88}), we mean generic with respect to the restriction of $\psi^{G}$ to $U\cap M$. Let $\sigma$ be an irreducible admissible generic representation of $\GL_k(E)$ and let $\tau$ be an irreducible admissible generic representation of $G_m$, put $\pi=\sigma\times \tau$ to denote the corresponding generic representation of $M$. The Langlands-Shahidi method \cite{S90} applied to the triple $(G,M,\pi)$ then gives rise to the local factors $L(s,\sigma\times\tau,r_i)$ and $\gamma(s,\sigma\times\tau,r_i,\psi)$, $i=1,2$. We write $L(s,\sigma\times\tau)$ and $\gamma(s,\sigma\times\tau,\psi)$ to denote the factors corresponding to the representation $r_1$. Similarly, we have the local factors $L^{U}(s,\sigma\times\tau')$ and $\gamma^{U}(s,\sigma\times\tau',\psi)$ associated with the triplet $(H,M\cap H,\sigma\times\tau')$ for any irreducible admissible generic representation $\tau'$ of $H_m$. (Here, $H=H_n$.)
\subsubsection{The unramified calculation}
\label{sec-unr}
Assume in this section that $\tau$, $\sigma$, and $E/F$ are all unramified. Write $\tau=\tau(\nu_1,\nu_2,\ldots,\nu_m,\nu_{0})$ as the spherical constituent of a  unramified principal series representation, where $\nu_1,\nu_2,\ldots,\nu_m$ are unramified characters of $E^{\times}$, and $\nu_{0}$ is an unramified character of $F^{\times}$. Its central character $\omega_{\tau}$ is given by $\omega_{\tau}=\nu_1\nu_2\cdots\nu_m(\nu_{0}\circ N_{E/F})$. Similarly, write $\sigma=\sigma(\mu_1,\mu_2,\ldots,\mu_k)$, where $\mu_1,\mu_2,\dots,\mu_k$, are unramified characters of $E^{\times}$. Let $\chi$ be the corresponding unramified character of the maximal torus ${T}\subset {M}$ so that $\pi$ is the spherical constituent of the unramified principal series representation of ${M}$ induced from $\chi$. Let $\widetilde{\rho}$ be half the sum of the absolute roots generating ${\bf N}$, and let $\widetilde{\alpha}'$  be $\frac{\widetilde{\rho}}{({\widetilde{\alpha}_i}^{\vee},\widetilde{\rho})}$, where $\widetilde{\alpha}_i$ is one of the two simple roots outside $\widetilde{\theta}$. Let $\alpha'$ denote the restriction of $\widetilde{\alpha}'$ to the maximal split torus ${\bf A}_{0}$. Then one has the general formula 
\[
L(s,\pi,r_i)=\prod_{\substack{\gamma\in \Sigma^{+}-\Sigma_{\theta}^{+}\\\langle \alpha',\gamma\rangle=i}}(1-\chi(\gamma^{\vee}(\varpi_F))q_F^{-n_{\gamma}s})^{-1}; i=1,2,
\]
where $\langle \alpha',\gamma \rangle=\frac{2(\widetilde{\gamma},\widetilde{\alpha}')}{(\widetilde{\gamma},\widetilde{\gamma})}$ with $\widetilde{\gamma}$ any absolute root restricting to $\gamma$ and $n_{\gamma}$ is the number of absolute roots restricting to $\gamma$. From this we obtain 
\begin{eqnarray}
L(s,\pi,r_1)&=&\prod_{i=1}^{k} \prod_{j=1}^{m}L_{E}(s,\bar{\omega}_{\tau}\mu_i\nu^{-1}_j)L_E(s,\bar{\omega}_{\tau}\mu_i\nu_j),\nonumber \\
L(s,\pi,r_2)&=& \prod_{i=1}^{k}L_{F}(s,\nu_{0}\mu_i|_{F^{\times}})\prod_{1\leq i<j\leq k}L_{E}(s,(\nu_{0}\circ N_{E/F})\mu_{i}\mu_{j}).\nonumber 
\end{eqnarray}
Here, for ${\mathfrak k}=E$ or $F$ and an unramified character $\eta$ of ${\mathfrak k}^{\times}$, $L_{\mathfrak k}(s,\eta)=(1-\eta(\varpi_{\mathfrak k})q_{\mathfrak k}^{-s})^{-1}$, and the character $\bar{\omega}_{\tau}$ is the Galois conjugate of $\omega_{\tau}$ given by $\bar{\omega}_{\tau}(x)=\omega_{\tau}(\bar{x}), x\in E^{\times}$. 

\subsection{Base change lifts from even unitary similitude groups in the generic case}
\label{GF for GU}
We recall the notion of base change lifts (or transfers) from similitude unitary groups to general linear groups. Let $K/k$ be a quadratic extension of number fields (or local fields). Consider the groups ${\G}={\bf G}_{n}$ and ${\bf H}={\bf H}_{n}$ defined with respect to $K/k$ as in \S\ref{sec:unitary-groups}. Put $\widetilde{{\bf G}}={\rm Res}_{K/k} ({\rm GL}_{2n} \times {\rm GL}_1)$ and $\widetilde{{\bf H}}$=${\rm Res}_{K/k} ({\rm GL}_{2n})$. The identification ${\G}_n\simeq_{K} {\rm GL}_{2n}\times {\rm GL}_{1}$ identifies ${\rm Res}_{K/k}{\G}_n$ with $\widetilde{\G}$ and similarly for the group $\widetilde{\bf H}$. We introduced the dual group ${}^L G$ in \S\ref{sec:Lgps}, the corresponding group for the unitary group ${\bf H}={\bf H}_n$ is given by ${}^L H={\rm GL}_{2n}(\C)\rtimes \Gamma_k$ with the same Galois action on ${\rm GL}_{2n}(\C)$. Suppose $H \subset G$ denote the subgroups consisting of the ${\mathbf A}_k$-points (or $k$-points) of the algebraic groups ${\mathbf H} \subset {\mathbf G}$, respectively. The natural projection map from 
\[
{}^L G\longrightarrow {}^L H,\,(g,\lambda)\rtimes \gamma\mapsto g\rtimes \gamma,
\]
corresponds to taking irreducible constituents of $\pi|_{H}$ for an irreducible admissible representation $\pi$ of $G$. We note that ${}^L\widetilde{G} = {\rm GL}_{2n}(\mathbb{C}) \times {\rm GL}_1(\mathbb{C}) \times {\rm GL}_{2n}(\mathbb{C}) \times {\rm GL}_1(\mathbb{C}) \rtimes \Gamma_k$ with the Galois action factoring through the Galois group ${\rm Gal}(K/k)$ and the non-trivial Galois automorphism sends $(g_1,\lambda_1)\times (g_2,\lambda_2)\mapsto (g_2,\lambda_2)\times (g_1,\lambda_1)$. Similarly, ${}^L\widetilde{H} = {\rm GL}_{2n}(\mathbb{C}) \times {\rm GL}_{2n}(\mathbb{C}) \rtimes \Gamma_k$ with the non-trivial action given by $(g_1,g_2)\mapsto (g_2,g_1)$. 

The embedding $\widehat{G}\hookrightarrow \widehat{\widetilde{G}}$ given by $(g,\lambda)\mapsto ((g,\lambda),\theta(g,\lambda))$ extends to a morphism of $L$-groups
\[
{\rm BC}: {}^L{G} \rightarrow {}^L\widetilde{{G}}
\]
called the stable base change map (cf. \cite[Section 2]{KK08}). There is a similarly defined map ${\rm BC}^{U}: {}^L{H} \rightarrow {}^L\widetilde{H}$. (see Section 3 in \cite{KK05} for more details).

\subsubsection{Local Lifts in the Generic Case}

We deduce the existence of local lifts for generic representations of similitude unitary groups from the corresponding results in \cite{KK05}. Let $E/F$ be a quadratic extension of $p$-adic fields, as defined in \S2.1. Define $G:={\mathbf G}_n(F)$ and $H:= {\mathbf H}_n(F)$. As observed earlier, $ZH \subset G$ is a subgroup of index 2, placing us in the framework of \cite[Section 6]{BX}, which establishes the following key points:

\begin{enumerate}
    \item For any $\pi \in \Pi(G)$, the restriction $\pi|_H$ is multiplicity-free.
    \item The restriction decomposes as 
    \[
    \pi|_H = \pi_1 \oplus \pi_1 \circ \mathrm{Ad}(g),
    \]
    where $g = \mathrm{diag}(aI_n, I_n) \in G$ with $a \notin N_{E/F}(E^\times)$.
\end{enumerate}

When $\pi$ is $\psi$-generic, we fix $\pi_1$ as the unique $\psi$-generic constituent of $\pi|_H$.

\begin{definition}
An irreducible admissible representation $\Pi \otimes \chi$ of $\tilde{{\bf G}}(F) \cong \mathrm{GL}_n(E) \times \mathrm{GL}_1(E)$ is called the \textbf{local base change lift} of $\pi$ if:
\begin{itemize}
    \item $\chi = \bar{\omega}_\pi$,
    \item The central character $\omega_\Pi$ of $\Pi$ satisfies 
    \[
    \omega_\Pi(z) = \omega_\pi(z / \bar{z}),
    \]
    \item The following identities hold for all irreducible generic representations $\sigma$ of $\mathrm{GL}_m(E)$:
    \begin{equation}
    \gamma^U(s, (\sigma \otimes \bar{\omega}_\pi) \times \pi_1, \psi) = \lambda(E/F, \psi)^{2mn} \gamma(s, (\sigma \otimes \chi) \times \Pi, \psi \circ \mathrm{Tr}_{E/F}),
    \end{equation}
    \begin{equation}
    L^U(s, (\sigma \otimes \bar{\omega}_\pi) \times \pi_1) = L(s, (\sigma \otimes \chi) \times \Pi).
    \end{equation}
\end{itemize}
Here, $\lambda(E/F,\psi)$ denotes Langlands' $\lambda$-function associated with the pair $(E/F,\psi)$.
\end{definition}

\begin{remark}\label{rmk_unr_lift}
Suppose $E/F$, $\pi$ and $\sigma$ are all unramified. Write $\pi=\pi(\nu_1,\nu_2,\ldots$ $,\nu_m,\nu_{0})$ as the spherical constituent of a unramified principal series representation and write $\sigma=\sigma(\mu_1,\mu_2,\ldots,\mu_k)$, where $\mu_1,\mu_2,\dots,\mu_k$, are unramified characters of $E^{\times}$, as in \S\ref{sec-unr}, Let $\Pi$ be the unique unramified quotient $\Pi(\nu_1,\ldots,\nu_n,\nu_n^{-\theta},\ldots,\nu_n^{-\theta} )$. It follows from the unramified calculation in \S\ref{sec-unr} that $\Pi\otimes\bar{\omega}_{\pi}$ is the local base change lift of $\pi$. 
\end{remark}

For ramified representations, we deduce the existence of local lifts from the corresponding results for unitary groups in \cite{KK05}. 
\begin{theorem}\label{locallift}
Assume $E/F$ is a quadratic extension of $p$-adic fields. Let $\pi$ be a supercuspidal generic representation of $G$ and let $\omega_{\pi}$ be its central character. Then it has a local lift which is unique and of the form $(\rho_1 \boxplus \cdots \boxplus \rho_k) \otimes \bar{\omega}_{\pi}$, where $\rho_i$'s are supercuspidal representations of ${\rm GL}_{n_i}(E)$ for all $i=1, \cdots, k$ satisfying $\rho_i \cong \rho_i^{\epsilon}$, $\rho_i \not\cong \rho_j$ for $i\not=j$, and $L(s, \rho_i, r_A \otimes \delta_{E/F})$ has a pole at $s=0$.
\end{theorem}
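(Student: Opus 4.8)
The plan is to transfer the known result for unitary groups from \cite{KK05} to the similitude group $G = {\bf G}_n(F)$ by means of the restriction mechanism recalled above. First I would take the supercuspidal generic representation $\pi$ of $G$ and pass to $\pi_1$, the unique $\psi$-generic constituent of $\pi|_H$, as in points (1)--(2) preceding the Definition. Since $\pi$ is supercuspidal, $\pi_1$ is a supercuspidal generic representation of $H = {\bf H}_n(F) = {\rm U}(n,n)(F)$ (restriction to a finite-index subgroup preserves supercuspidality, and the generic constituent is singled out by compatibility of Whittaker data). Now invoke the base change result for unitary groups of this type in \cite{KK05}: $\pi_1$ admits a stable base change lift to ${\rm GL}_{2n}(E)$ which, because $\pi_1$ is supercuspidal generic, is an isobaric sum $\rho_1 \boxplus \cdots \boxplus \rho_k$ with each $\rho_i$ a supercuspidal representation of ${\rm GL}_{n_i}(E)$, the $\rho_i$ conjugate-self-dual of the correct parity (so $\rho_i \cong \rho_i^\epsilon$), mutually inequivalent (forcing $\rho_i \not\cong \rho_j$ for $i \neq j$), and with $L(s,\rho_i, r_A \otimes \delta_{E/F})$ having a pole at $s=0$ — this is precisely the image characterization of stable base change from ${\rm U}(n,n)$ in the supercuspidal generic case.

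Next I would promote this to a lift of $\pi$ itself. The candidate is $\Pi \otimes \bar\omega_\pi$ where $\Pi = \rho_1 \boxplus \cdots \boxplus \rho_k$, and one must check the three conditions in the Definition of local base change lift: that $\chi = \bar\omega_\pi$ (true by construction), that $\omega_\Pi(z) = \omega_\pi(z/\bar z)$, and that the twisted $\gamma$- and $L$-factor identities hold for all irreducible generic $\sigma$ of ${\rm GL}_m(E)$. The central character condition follows by comparing $\omega_\pi$ with $\omega_{\pi_1}$ (they differ exactly by the similitude twist) and using the known relation between $\omega_{\pi_1}$ and $\det \Pi = \prod \omega_{\rho_i}$ for the unitary base change. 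The factor identities are exactly the content of the Langlands--Shahidi comparison already in place: $L^U$ and $\gamma^U$ on the unitary side are defined via the triple $(H, M\cap H, \sigma \times \pi_1)$, and these agree — up to the stated power of $\lambda(E/F,\psi)$ — with the corresponding ${\rm GL}_k(E) \times {\rm GL}_{2n}(E)$ Rankin--Selberg factors by the multiplicativity of $\gamma$-factors and the unitary-group case of \cite{KK05}; the similitude factors on $G$ reduce to the $H$-factors because the extra ${\rm GL}_1$-twist is tracked by $\bar\omega_\pi = \chi$.

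For uniqueness I would argue as follows: any local lift $\Pi' \otimes \chi'$ forces $\chi' = \bar\omega_\pi$ and then makes $\Pi'$ a lift of $\pi_1$ satisfying the same $\gamma$-factor identities against all $\sigma$; by the local converse theorem for ${\rm GL}_{2n}(E)$ (the $\gamma$-factors $\gamma(s,\sigma\times\Pi',\psi\circ{\rm Tr})$ for $\sigma$ ranging over generic representations of ${\rm GL}_m(E)$, $m \le 2n-1$, determine $\Pi'$ up to isomorphism), $\Pi'$ is uniquely determined, hence equals $\Pi$. The structure of $\Pi$ as a sum of distinct conjugate-self-dual supercuspidals of the correct parity, each contributing a pole to $L(s,\rho_i, r_A\otimes\delta_{E/F})$, is then inherited verbatim from the unitary-group statement.

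I expect the main obstacle to be bookkeeping rather than a conceptual gap: namely, pinning down exactly how the $\lambda(E/F,\psi)^{2mn}$ normalization and the similitude-to-unitary reduction of the $L$- and $\gamma$-factors propagate through the isobaric decomposition of $\Pi$, and verifying that the central-character bookkeeping is consistent across the whole chain $\pi \rightsquigarrow \pi_1 \rightsquigarrow \Pi$. One also has to be slightly careful that the parity condition $\rho_i \cong \rho_i^\epsilon$ and the pole condition on $L(s,\rho_i,r_A\otimes\delta_{E/F})$ are the \emph{same} condition (Asai $L$-function pole $\Leftrightarrow$ conjugate-orthogonal), so that the cited \cite{KK05} characterization really does give the statement as phrased; this is standard but should be spelled out.
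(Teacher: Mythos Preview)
Your overall strategy matches the paper's: restrict $\pi$ to $H$, take the generic supercuspidal constituent, apply \cite[Theorem 8.4]{KK05} to obtain $\Pi=\rho_1\boxplus\cdots\boxplus\rho_k$ with the stated properties, twist by $\bar\omega_\pi$, and then invoke the local converse theorem \cite{H93} for uniqueness. On that level the two proofs are the same.

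The one substantive divergence is in how you verify the central character condition $\omega_\Pi(z)=\omega_\pi(z/\bar z)$. You propose to do this purely locally, by ``comparing $\omega_\pi$ with $\omega_{\pi_1}$'' and invoking a ``known relation between $\omega_{\pi_1}$ and $\det\Pi$'' from the unitary base change. The paper does \emph{not} argue this way: it instead uses that the local lift $\Pi$ in \cite{KK05} is, by construction, the local component of a global base change lift whose unramified components satisfy the central character relation (Remark~\ref{rmk_unr_lift}), and then appeals to multiplicity one for idele class characters to force $\omega_\Pi(z)=\omega_\pi(z/\bar z)$ at the place in question. Your local shortcut is not fully justified as stated: the relation $\omega_\Pi(z)=\omega_{\pi_1}(z/\bar z)$ amounts to $\omega_\Pi|_{F^\times}$ being trivial together with a compatibility under $E^\times/F^\times\cong E^1$, but conjugate self-duality of the $\rho_i$ only forces $\omega_\Pi$ to be trivial on $N_{E/F}(E^\times)$, not on all of $F^\times$. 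So the purely local bookkeeping you sketch does not obviously close, and the global input via multiplicity one is what the paper uses to supply the missing constraint. Apart from this point, your proof and the paper's coincide.
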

\begin{proof}
Let $\pi'$ be an irreducible admissible generic constituent of $\pi|H$ and let $\Pi$ be its local lift as in \cite[Theorem 8.4]{KK05}, it is of the form $\rho_1\boxplus\cdots\boxplus\rho_k$ with $\rho_i$'s as stated above. By construction, $\Pi$ is a local component of a global automorphic representation whose unramified components are local base change lifts as in Remark~\ref{rmk_unr_lift} (see loc.cit.) The multiplicity one theorem for idele class characters then implies that $\omega_{\Pi}$ is given by $\omega_{\Pi}(z)=\omega_{\pi}(z/\bar{z})$. We conclude $\Pi\otimes\bar{\omega}_{\pi}$ is a local lift of $\pi$. If $\Pi'\otimes\bar{\omega}_{\pi}$ were another candidate lift (generic) of $\pi$, then it follows that
\[
\gamma(s, (\sigma\otimes\bar{\omega}_{\pi})\times \Pi, \psi \circ \operatorname{Tr}_{E / F})=\gamma(s, (\sigma\otimes\bar{\omega}_{\pi})\times \Pi', \psi \circ \operatorname{Tr}_{E / F})
\]
for all irreducible generic representations $\sigma$ of $\GL_m(E)$. We now appeal to the local converse theorem for general linear groups \cite{H93} to conclude $\Pi\simeq \Pi'$.
\end{proof}

\subsubsection{Global lifts in the generic case}
\begin{definition}\label{sbcl}
Let $\pi=\otimes_v \pi_v$ be a globally generic irreducible cuspidal representation of $\mathbf{G}\left(\mathbb{A}_{k}\right)$. We say an automorphic representation $\Pi \otimes \chi =\otimes_v (\Pi_v \otimes \chi_v)$ of $\widetilde{{\bf G}}(\mathbb{A}_{k}) \cong$ $\mathrm{GL}_{2n}(\mathbb{A}_{K}) \times \mathrm{GL}_{1}(\mathbb{A}_{K}) $ is the strong base change lift of $\pi$ if for every $v, \Pi_v \otimes \chi_v$ is a local base change lift of $\pi_v$, in the above sense. It is said to be a weak base change lift if $\Pi_v\otimes\chi_v$ is a local lift of $\pi_v$ for almost all places $v$. 
\end{definition}

\begin{theorem}\label{Global BC lift:GU2}
Let $\pi$ be a globally $\psi$-generic cuspidal representation of $\G(\mathbb{A}_k)$ with central character $\omega_{\pi}$. There exists a strong base change lift of $\pi$ and is necessarily of the form 
\begin{equation}
{\rm BC}(\pi) = ({\sigma}_1 \boxplus \cdots \boxplus {\sigma}_k)\otimes \bar{\omega}_{\pi},
\end{equation}
where $\sigma_i$'s are unitary cuspidal representation of ${\rm GL}_{n_i}(\mathbb{A}_K)$ such that $\sigma_i \not\cong \sigma_j$ for $i \not= j$ and $L(s,{\sigma}_i, {\rm As}_{K/k} \otimes \delta_{K/k})$ has a simple pole at $s=1,1 \leq i \leq k$.
\end{theorem}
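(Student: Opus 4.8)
The plan is to deduce this from the base change lift for the unitary group $\mathbf{H}_n=\mathrm{U}(n,n)$ established in \cite{KK05}, via the restriction method. First I would restrict $\pi$ to $H(\mathbb{A}_k)=\mathbf{H}_n(\mathbb{A}_k)$. Since $\mathbf{H}_n$ and $\mathbf{G}_n$ share the same derived group and every parabolic $k$-subgroup of $\mathbf{H}_n$ is $\mathbf{P}\cap\mathbf{H}_n$ for a parabolic $k$-subgroup $\mathbf{P}$ of $\mathbf{G}_n$ with the \emph{same} unipotent radical, the constant term along a parabolic of $\mathbf{H}_n$ of each cusp form $\varphi$ in the space of $\pi$ is the restriction of the corresponding constant term along a parabolic of $\mathbf{G}_n$; hence the restricted forms $\varphi|_H$ are cuspidal and $\pi|_H$ decomposes as a finite direct sum of irreducible cuspidal automorphic representations of $H(\mathbb{A}_k)$ (finiteness because $G(k)\backslash G(\mathbb{A}_k)/Z(\mathbb{A}_k)H(\mathbb{A}_k)$ is finite by \eqref{eqn-quotient} and class field theory; local multiplicity-freeness is \cite{BX}). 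Because $\pi$ is globally $\psi$-generic, a nonzero $\psi^{G}$-Whittaker functional on $\pi$ restricts to a nonzero $\psi$-Whittaker functional on $\pi|_H$, which is carried by a single constituent; call it $\pi'=\otimes_v\pi'_v$, a globally $\psi$-generic cuspidal representation of $H(\mathbb{A}_k)$ whose local components are the distinguished generic constituents $\pi'_v=\pi_{1,v}$ of $\pi_v|_{H_v}$ fixed in \S\ref{GF for GU}.

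Next I would feed $\pi'$ into the strong base change lift for $\mathbf{H}_n$ of \cite{KK05} (whose shape is recalled in \S\ref{GF for GU}; to ensure local base change at \emph{every} place one combines it with the local lifts of \cite{KK05}, exactly as in the proof of Theorem~\ref{locallift}). This produces an isobaric automorphic representation $\Pi=\sigma_1\boxplus\cdots\boxplus\sigma_k$ of $\mathrm{GL}_{2n}(\mathbb{A}_K)=\widetilde{\mathbf{H}}(\mathbb{A}_k)$, with each $\sigma_i$ unitary cuspidal on $\mathrm{GL}_{n_i}(\mathbb{A}_K)$, $n_1+\cdots+n_k=2n$, the $\sigma_i$ pairwise non-isomorphic, $L(s,\sigma_i,\mathrm{As}_{K/k}\otimes\delta_{K/k})$ with a simple pole at $s=1$, and such that $\Pi_v$ is the local base change lift of $\pi'_v$ at every place $v$. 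I then set $\mathrm{BC}(\pi):=\Pi\otimes\bar{\omega}_\pi$, a representation of $\widetilde{\mathbf{G}}(\mathbb{A}_k)\cong\mathrm{GL}_{2n}(\mathbb{A}_K)\times\mathrm{GL}_1(\mathbb{A}_K)$; it is automorphic because $\Pi$ is automorphic and $\bar{\omega}_\pi$ is an idele class character of $\mathbb{A}_K^\times$.

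It then remains to verify that $\Pi\otimes\bar{\omega}_\pi$ is a \emph{strong} base change lift of $\pi$, i.e.\ that $\Pi_v\otimes\bar{\omega}_{\pi_v}$ is the local base change lift of $\pi_v$ at every $v$. By the construction of the Langlands--Shahidi $L$- and $\gamma$-factors in \S\ref{sec-LS} and the identification there of $L^{U}(s,\,\cdot\,\times\pi_{1,v})$ and $\gamma^{U}(s,\,\cdot\,\times\pi_{1,v},\psi)$ with the $\mathrm{U}(n,n)$-factors attached to the triple $(H,M\cap H,\sigma\times\pi_{1,v})$, the two identities defining the local base change lift of $\pi_v$ are precisely the identities defining ``$\Pi_v$ is the local base change of $\pi_{1,v}=\pi'_v$'' specialized to the twisted test vector $\sigma\otimes\bar{\omega}_{\pi_v}$, hence hold by the previous step. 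The central character condition $\omega_\Pi(z)=\omega_\pi(z/\bar z)$ I would check globally: $\omega_\Pi=\prod_i\omega_{\sigma_i}$ and $z\mapsto\omega_\pi(z/\bar z)$ are both idele class characters of $K^\times\backslash\mathbb{A}_K^\times$, they agree at almost every place by the unramified computation of Remark~\ref{rmk_unr_lift}, hence everywhere by multiplicity one for idele class characters; in particular the local central character identities hold, completing the verification.

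Finally I would prove the uniqueness clause (``necessarily of the form''): if $\Pi'\otimes\chi'$ is any strong base change lift of $\pi$, then at every unramified place $\Pi'_v\otimes\chi'_v$ is a local base change lift of $\pi_v$, which by Remark~\ref{rmk_unr_lift} is uniquely $\Pi_v\otimes\bar{\omega}_{\pi_v}$; thus $\Pi'_v\cong\Pi_v$ for almost all $v$, whence $\Pi'\cong\Pi$ by strong multiplicity one for isobaric automorphic representations of $\mathrm{GL}_{2n}(\mathbb{A}_K)$, and then $\chi'=\bar{\omega}_\pi$ is forced; the distinctness of the $\sigma_i$ and the simple pole of $L(s,\sigma_i,\mathrm{As}_{K/k}\otimes\delta_{K/k})$ at $s=1$ are inherited from \cite{KK05}. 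The step I expect to be the main obstacle is the ramified local compatibility in the third paragraph — reconciling the $\mathbf{G}_n$-side local factors (the $r_1$-factors, which carry the Galois twist $\bar{\omega}_\tau$ and the similitude data) with the $\mathrm{U}(n,n)$-side factors of \cite{KK05} through restriction to $H$ and the twist by $\bar{\omega}_{\pi_v}$, uniformly in $v$ — together with the (routine but not purely formal) verification that restriction to $H$ preserves cuspidality and isolates a single globally $\psi$-generic constituent.
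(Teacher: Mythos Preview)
Your proposal is correct and follows essentially the same route as the paper: restrict $\pi$ to $H$, pick the globally generic constituent $\pi'$, apply the strong base change of \cite{KK05} to obtain $\Pi'=\sigma_1\boxplus\cdots\boxplus\sigma_k$, and set ${\rm BC}(\pi)=\Pi'\otimes\bar{\omega}_\pi$; the paper's proof is in fact terser than yours on the restriction step (it simply takes $\pi'$ as given) and on the ramified local compatibility (which it absorbs into ``by construction''). The one place where the paper is more explicit is uniqueness: rather than invoking strong multiplicity one for isobaric representations directly, it first cites \cite[Lemma~5.4]{KK08} to reduce to a weak lift of $\pi'$ on the unitary side and then \cite[Theorem~6.8]{KK05} to force any competing $\Pi_1'$ to be fully induced from unitary cuspidals, after which ordinary multiplicity one applies---your shortcut is fine, but note that strong multiplicity one as usually stated compares two \emph{isobaric} representations, so you are implicitly using that an arbitrary automorphic $\Pi'$ agreeing almost everywhere with the irreducible isobaric $\Pi$ must itself be that isobaric sum.
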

\begin{proof}

For $\pi'$ as above, let $\Pi'={\rm BC}^{U}(\pi')$ be the strong base change lift of $\pi'$ as in \cite[Theorem 8.10]{KK05}, it is of the form $\sigma_1\boxplus\cdots\boxplus\sigma_k$ with $\sigma_i's$ satisfying the stated condition. Put $\Pi=\Pi'\otimes\bar{\omega}_{\pi}$, then by construction $\Pi_v$ is a local base change lift of $\pi_v$ for every $v$. On the other hand, as shown in \cite[Lemma 5.4]{KK08}, any other strong base change lift $\Pi_1$ of $\pi$ is necessarily of the form $\Pi_1'\otimes\bar{\omega}_{\pi}$ with $\Pi'_{1,v}\simeq {\rm BC}^{U}(\pi'_v)$ for almost all $v$. This means $\Pi'_1$ is a weak base change lift of $\pi'$ (in the context of unitary groups). We can now appeal to \cite[Theorem 6.8]{KK05} to conclude that $\Pi'_1$ is a full induced representation from unitary cuspidal representations of general linear groups. Hence by multiplicity one theorem, $\Pi'_v\simeq \Pi'_{1,v}$ for all $v$.  
\end{proof}
\begin{remark}\label{equality with LS}
If $v$ is a place where the associated data are all unramified, the calculation in \S\ref{sec-unr} implies that 
\begin{eqnarray}
 \gamma^{U}(s, (\sigma_v\otimes\bar{\omega}_{\pi_v}) \times \pi'_v, \psi_v)&=&\gamma(s,\sigma_v\times\pi_v,\psi_v),\nonumber \\
L^U(s,( \sigma_v\otimes\bar{\omega}_{\pi_v}) \times \pi'_v)&=& L(s, \sigma_v\times\pi_v).\nonumber
\end{eqnarray}
This also holds at a split place $v$. (cf. \cite[Remark 6.1]{KK05}.) In this case, the representation $\pi_v$ is identified with $\tau_v\otimes \xi_v$ as a representation of ${\rm GL}_n(F_v)\times F_v^{\times}$. Under this identification, ${\rm BC}(\pi_v)\simeq (\tau_v\otimes\tilde{\tau}_v)\otimes\bar{\omega}_{\pi_v}$. If $w_1,w_2$, are the places lying above $v$, it follows from \cite[Section 2]{KK08} that the local component of $\omega_{\pi}$ at $w_1$ is $\omega_{\tau_v}\xi_v$ and at $w_2$ is $\xi_v$. For the remaining finite places $v$, where $\pi_v$ is ramified, one can establish the above equalities when $\pi_v$ is supercuspidal via the local-to-global principal as in \cite[Section 5]{S90}. The case of a general $\pi_v$ will then follow from the multiplicative property of these local factors. 
\end{remark}

\subsection{\bf Generic Arthur packet conjectures}\label{s:generic Arthur}

We begin with a review of the concept of $L$-packets and $A$-packets in representation theory of $p$-adic groups. For $F$ as in Section~\ref{sec:prelims}, let $W'_F$ be the group $W'_F = W_F \times  {\rm SL}_2(\mathbb{C})$, where as usual $W_F$ is the Weil group of $F$. Let $I_F\subset W_F$ be the inertia subgroup of $F$ and let us fix a geometric Frobenius element ${\rm Fr}\in W_F$ which satisfies ${\rm Fr}(x^q)=x$ for $x$ in the residue field of the maximal unramified extension of $F$. We write $\|\cdot\|: W_F\longrightarrow {\C}^{\times}$ to denote the unramified character of $W_F$ that takes the value $q^{-1}$ on ${\rm Fr}$. For a connected reductive group ${\bf G}$ defined over $F$, let $\Pi(G)$ denote the set of equivalence classes of irreducible representations of $G$, where $G:=\G(F)$. A Langlands parameter is a homomorphism $\varphi$ from $W'_F\longrightarrow {}^L{G}$ satisfying
\begin{itemize}
\item[(1)] $\varphi: {\rm SL}_2(\C)\longrightarrow \widehat{G}$ is algebraic,
\item[(2)] $\varphi$ is trivial on a open subgroup of $I_F$ and $\phi(Fr)$ is semisimple,
\item[(3)] $\varphi$ is compatible with the projection onto the Weil group $W_F$.
\end{itemize}
Two such parameters $\varphi$, $\varphi'$, are said to be equivalent if there is an element $g\in \widehat{G}$ such that $\varphi'=Ad(g)\circ \varphi$. Let $\Phi(G)$ be the set of equivalence classes of Langlands parameters. The local Langlands conjecture asserts that to each $\varphi$ in $\Phi(G)$ corresponds a finite set $\Pi_{\varphi}$ of equivalence class of irreducible admissible representations of the group $G$ called $L$-packets which partition the set $\Pi(G)$. A Langlands parameter $\varphi$ is said to be tempered if the image $\varphi(W_F)$ of $W_F$ is bounded in $\widehat{G}$. We write $\Phi_{\rm temp}(G) \subset \Phi(G)$ to denote the subset of equivalence classes of all tempered parameters. Let $\Pi_{\rm temp}(G)$ denote the subset of $\Pi(G)$ that consists of tempered representations, in the sense that their characters are tempered distributions on $G$. 

Let us now recall the notion of $A$-packets as introduced by Arthur. For a detailed account, see \cite{A89, A13}. An Arthur parameter $\psi$ is a continuous homomorphism 
\[
\psi: W'_F\times {\rm SL}_2(\C)\longrightarrow {}^LG
\]
such that the restriction of $\psi$ to $W'_F$ belongs to $\Phi_{\rm temp}(G)$ and the restriction of $\psi$ to the supplemental ${\rm SL}_2(\C)$ factor is algebraic. Let $\Psi(G)$ denote the set of all Arthur parameters subject to the same equivalence as above. Arthur also introduced a larger set denoted as $\Psi^{+}(G)$ whose elements are equivalence classes of parameters $\psi$ but without the requirement that $\psi$ be bounded upon restriction to $W'_F$. For each Arthur parameter $\psi$, there is an associated Langlands parameter $\phi_{\psi}$ given by the formula
\[
\varphi_{\psi}(w)=\psi(w, \begin{pmatrix} \|w\|^{\frac{1}{2}} & 0 \\ 0 & \|w\|^{-\frac{1}{2}} \end{pmatrix}), w\in W'_F,
\]
where $\|\cdot\|$ on $W'_F$ is the pullback of the above defined unramified character of $W_F$. 

Arthur conjectured \cite{A13} that for each $\psi$ in $\Psi(G)$, there exists an associated finite set $\Pi_{\psi}$ of irreducible admissible representations of $G$, referred to as the $A$-packet of $\psi$, satisfying several defining properties. Primarily, $A$-packets are constrained to comprise solely unitary representations and must encapsulate the associated $L$-packet, meaning $\Pi_{\varphi_{\psi}}(G)\subset \Pi_{\psi}(G)$. A pivotal aspect of the $A$-packet of $\psi$ involves the presence of a pairing between a naturally affiliated finite group $A_{\psi}$ and the set $\Pi_{\psi}(G)$, thereby facilitating a parameterization of $\Pi_{\psi}(G)$ in terms of irreducible characters of the group $A_{\psi}$. Contrary to $L$-packets, $A$-packets are neither disjoint nor do they partition the set of equivalence classes of irreducible unitary representations of $G$. 

When $\G$ is a quasi-split classical group--such as a symplectic, special orthogonal or unitary group--Arthur's conjecture, and consequently the local Langlands conjecture have been proven. Arthur established these results for symplectic and orthogonal groups \cite{A13}, while Mok extended them to quasi-split unitary groups \cite{M15}.

We now introduce the generic Arthur packet conjecture.
\begin{conjecture}\label{Weak G}
Let $\psi$ and $\varphi_{\psi}$ be as above and let $\Pi(\varphi_{\psi})$ be the $L$-packet associated to $\varphi_{\psi}$. Assume $\Pi(\varphi_{\psi})$ has a generic member with respect to some Whittaker datum. Then $\varphi_{\psi}$ is tempered.
\end{conjecture}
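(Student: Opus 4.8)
The plan is to deduce Conjecture~\ref{Weak G} for ${\bf G}_n$ from the abstract criterion of \cite[Theorem 5.1]{S11}, which asserts precisely this statement for any quasi-split group ${\bf G}$ once one knows that the Langlands--Shahidi $L$- and $\gamma$-factors attached to the relevant maximal parabolics agree, through the (generic) local Langlands correspondence, with the corresponding Artin factors. Thus the single input that must be supplied is Theorem~\ref{T3:intro} (=Theorem~\ref{Langlands parameter}): for an irreducible admissible generic representation $\pi$ of ${\bf G}_n(F)$ there is a Langlands parameter $\phi_\pi$ with $L(s,\rho\times\pi)=L(s,\phi_\rho\otimes\phi_\pi)$ and $\gamma(s,\rho\times\pi,\psi_F)=\gamma(s,\phi_\rho\otimes\phi_\pi,\psi_F)$ for every irreducible admissible generic $\rho$ of ${\rm GL}$. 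Once this is in hand, the implication ``$\Pi(\varphi_\psi)$ has a generic member $\Rightarrow$ $\varphi_\psi$ tempered'' is a formal consequence: a generic member of $\Pi(\varphi_\psi)$ is, by definition, attached to $\varphi_\psi$, whose $L$-factor has the Arthur ${\rm SL}_2$ built into it; matching with the Langlands--Shahidi factor and invoking the holomorphy/tempered-ness dichotomy for Langlands--Shahidi $L$-functions of generic representations (the heart of \cite{S11}) forces the ${\rm SL}_2$ to be trivial, i.e.\ $\varphi_\psi$ tempered.

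First I would establish Theorem~\ref{T3:intro}. The strategy is the restriction method already used in \S\ref{GF for GU}: given generic $\pi$ on ${\bf G}_n(F)$, pass to a $\psi$-generic constituent $\pi_1$ of $\pi|_{{\bf H}_n(F)}$, use Mok's generic local Langlands correspondence for the even unitary group ${\bf H}_n$ \cite{M15} (equivalently the results of \cite{KK05}) to produce a parameter $\phi_{\pi_1}$ for ${\bf H}_n(F)$ matching the Langlands--Shahidi $\gamma$- and $L$-factors $\gamma^U, L^U$ with Artin factors, and then ``promote'' $\phi_{\pi_1}$ to a parameter $\phi_\pi$ into ${}^L G_n$ by prescribing the ${\rm GL}_1$-component via the central character $\omega_\pi$ (so that ${\rm BC}(\phi_\pi)$ reproduces ${\rm BC}(\pi)=(\sigma_1\boxplus\cdots\boxplus\sigma_k)\otimes\bar\omega_\pi$ of Theorem~\ref{Global BC lift:GU2}/Theorem~\ref{locallift}). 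The equality of $L$- and $\gamma$-factors for ${\bf G}_n$ then follows from the corresponding equalities for ${\bf H}_n$ via the comparison $\gamma^U(s,(\sigma\otimes\bar\omega_\pi)\times\pi_1,\psi)=\lambda(E/F,\psi)^{2mn}\gamma(s,(\sigma\otimes\chi)\times\Pi,\psi\circ{\rm Tr}_{E/F})$ recorded in the definition of the local base change lift, together with the analogous identity on the Galois side (Langlands' $\lambda$-factor bookkeeping, as in \cite{KK05, KK08}). The well-definedness of $\phi_\pi$ — i.e.\ independence of the choice of generic constituent $\pi_1$ and of the extension of the ${\rm GL}_1$-data — is guaranteed by the local converse theorem for ${\rm GL}_{2n}$ \cite{H93} applied to ${\rm BC}(\pi)$, exactly as in the proof of Theorem~\ref{locallift}.

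The main obstacle is the passage from unitary groups to unitary \emph{similitude} groups in Theorem~\ref{T3:intro}: one must check that the Langlands--Shahidi local factors for the triple $({\bf G}_n,{\bf M},\sigma\times\tau)$ (with ${\bf M}\cong R_{E/F}{\rm GL}_k\times{\bf G}_m$) are governed by the representation $r_1$ in precisely the way that, after restriction to ${\bf H}_n$, reduces to the unitary-group factors of \cite{KK05}, and that the $\lambda$-factor discrepancy $\lambda(E/F,\psi)^{2mn}$ on the Langlands--Shahidi side is matched by the $\epsilon$-factor normalization on the Artin side so that it cancels in the comparison of $\gamma$-factors (it contributes nothing to $L$-factors). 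Concretely this is the content of the comparison in Remark~\ref{equality with LS} extended from the unramified/split places to all places; the supercuspidal case is handled by the local-to-global argument of \cite[Section 5]{S90} using Theorem~\ref{Global BC lift:GU2}, and the general generic case follows by the multiplicativity of $\gamma$-factors under parabolic induction on both sides. Granting this, \cite[Theorem 5.1]{S11} applies verbatim to ${\bf G}_n$ and yields Conjecture~\ref{Weak G}.
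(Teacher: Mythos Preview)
Your proposal is correct and follows essentially the same route as the paper: establish the equality of Langlands--Shahidi and Artin local factors for generic representations of ${\bf G}_n(F)$ (Theorem~\ref{Langlands parameter}) via restriction to ${\bf H}_n$, the local lift of \cite{KK05}/\cite{M15}, and promotion by the central character $\bar\omega_\pi$, and then invoke \cite[Theorem~5.1]{S11} to conclude that $\varphi_\psi$ is tempered. The only point the paper makes more explicit than your sketch is that, beyond the supercuspidal case, the parameter $\phi_\pi$ in Theorem~\ref{Langlands parameter} is built using Heiermann's construction \cite{H06a, H06b} from the supercuspidal support together with the Langlands classification, rather than by a bare appeal to multiplicativity of $\gamma$-factors.
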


\begin{remark}
Assuming the equality of the Langlands-Shahidi and Artin L-functions, the third author proved Conjecture \ref{Weak G} for any (quasi-split) ${\bf G}$ (cf. \cite[Theorem 5.1]{S11}). For additional progress on Conjecture \ref{Weak G} in the context of classical groups and ${\rm GSpin}$ groups, see the works of Ban, Jantzen-Liu, Liu, Kim, and Heiermann-Kim \cite{Ban06, JL14, L11, HeK17, K21}.
\end{remark}

\begin{definition}\label{def:t}
An $L$-packet $\Pi_{\varphi}(G)$ is said to be tempered if all its members are tempered representations.
\end{definition}

Here is another version of the above conjecture: 
\begin{conjecture}\label{Strong G}
Let $\psi, \varphi_{\psi},$ and $\Pi(\varphi_{\psi})$ be as in Conjecture \ref{Weak G}. If $\Pi(\varphi_{\psi})$ has a generic member with respect to any Whittaker datum, then it is a tempered $L$-packet.
\end{conjecture}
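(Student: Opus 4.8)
The plan is to prove Conjecture \ref{Strong G} for the groups ${\bf G}_n(F)$ and ${\bf H}_n(F)$ by combining the weak statement (Conjecture \ref{Weak G}, which will be available as Theorem \ref{T4:intro} once the equality of Langlands--Shahidi and Artin $L$-functions of Theorem \ref{T3:intro} is in place) with two structural facts about $L$-packets that are recorded later as Lemma \ref{TT} and Lemma \ref{lem:tempered $L$-packet}. First I would apply the weak conjecture: if $\Pi(\varphi_\psi)$ has a generic member with respect to some Whittaker datum, then $\varphi_\psi$ is a tempered parameter. The point to be gained beyond the weak version is twofold --- promoting ``$\varphi_\psi$ tempered'' to ``every member of $\Pi(\varphi_\psi)$ is a tempered representation'', and removing the dependence on the choice of Whittaker datum.

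For the first promotion, the natural route is through base change. By Theorem \ref{Global BC lift:GU2} (and its local avatar Theorem \ref{locallift}), a generic member $\pi_0$ of $\Pi(\varphi_\psi)$ has a local base change lift ${\rm BC}(\pi_0) = \Pi\otimes\bar\omega_{\pi_0}$ on $\widetilde{\bf G}(F)\cong {\rm GL}_{2n}(E)\times{\rm GL}_1(E)$, whose Langlands parameter is (up to the similitude twist) $\varphi_\psi$ composed with the ${\rm BC}$ map of $L$-groups; since $\varphi_\psi$ is tempered and ${\rm BC}$ sends bounded parameters to bounded parameters, ${\rm BC}(\pi_0)$ is a tempered representation of the general linear group, where temperedness is transparent from the Zelevinsky/Bernstein classification. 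Next I would invoke Lemma \ref{TT}: temperedness is preserved in both directions under the local functorial lift, so the ``only if'' direction gives that $\pi_0$ itself is tempered. Then Lemma \ref{lem:tempered $L$-packet} --- if one member of an $L$-packet is tempered and generic, all members are tempered --- upgrades this to: $\Pi(\varphi_\psi)$ is a tempered $L$-packet in the sense of Definition \ref{def:t}. For ${\bf H}_n(F)$ the same argument runs verbatim with ${\rm BC}^U$ in place of ${\rm BC}$, or alternatively one reads off the conclusion directly from Mok's explicit description of the packets in \cite[Theorem 2.5.1(b)]{M15}.

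For the Whittaker-datum issue, note that the hypothesis in Conjecture \ref{Strong G} is the a priori weaker ``generic with respect to \emph{any} Whittaker datum'' (existential over Whittaker data), which is exactly the hypothesis of Conjecture \ref{Weak G}; having a generic member for one datum is all that the weak conjecture --- and hence the chain above --- requires, so no genuine strengthening in the hypothesis is needed. Conversely the conclusion ``tempered $L$-packet'' is independent of any Whittaker datum, so once we have shown the packet is tempered we are done for all choices simultaneously. Finally, to make the statement unconditional I would point out that the definition of $\Pi(\varphi_\psi)$ being used --- via Base change, Definition \ref{def_BC}, and Theorem \ref{sbc:non-generic:GU} --- is precisely the one for which Lemma \ref{TT} and Lemma \ref{lem:tempered $L$-packet} are proved, so the three inputs are compatible.

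I expect the main obstacle to be Lemma \ref{TT}, specifically its ``if'' direction: deducing temperedness of $\pi$ on ${\bf G}_n(F)$ from temperedness of its functorial lift. Unlike the ``only if'' direction, which follows formally from the behaviour of parameters under ${\rm BC}$, the reverse implication has real content --- one must rule out that a non-tempered $\pi$ (a Langlands quotient of a properly induced representation from a non-unitary twist) could lift to something tempered, and the cleanest way to see this is again through the $L$-function identities of Theorem \ref{T3:intro}, tracking the location of poles of Rankin--Selberg $L$-functions under the correspondence and comparing with the exponents in the Langlands data of $\pi$. Managing the passage between the similitude group ${\bf G}_n$ and the unitary group ${\bf H}_n$ --- where restriction $\pi|_H$ is multiplicity-free with the two constituents conjugate under ${\rm Ad}(g)$ --- will require some care so that ``tempered'' is transported correctly across the restriction, but this is bookkeeping rather than a serious difficulty.
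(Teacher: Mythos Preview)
Your proposal is correct and follows essentially the same route as the paper's proof of Theorem~\ref{StrongConjG} (and Theorem~\ref{StrongConjG:U}): apply the weak version to get $\varphi_\psi$ tempered, push through ${\rm BC}$ to obtain a tempered lift on the general linear side, descend via Lemma~\ref{TT}, and then use Lemma~\ref{lem:tempered $L$-packet} to conclude the whole packet is tempered. One small slip: in your second paragraph you call the step ``$\Pi$ tempered $\Rightarrow$ $\pi_0$ tempered'' the \emph{only if} direction of Lemma~\ref{TT}, but (as you yourself note correctly in the final paragraph) this is the \emph{if} direction; the substance is right, only the label is swapped.
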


This paper aims to unconditionally prove these conjectures for unitary and similitude unitary groups of even rank.

\section{\bf The local conjectures for unitary similitude groups}

For a quadratic extension $E/F$ of non-archimedean local fields, we set $G={\bf G}_n(F)$ and $H={\bf H}_n(F)$ as before. There is a natural relationship between $L$-parameters of $G$ and those of $H$ as discussed in Section \ref{sec:prelims}. 

Namely, recall the projection maps from the group ${}^LG$ into ${}^L H$ and into ${}^L(R_{E/F}{\rm GL}_1)$. On the connected component of ${}^L G$ these maps are given by 
\[
(A,y)\mapsto A\,\ \ {\rm and } \ \ (A,y)\mapsto ((\det A)y,y),
\]
respectively. Any parameter $\phi: W_F'\longrightarrow {}^L G$ gives rise to a pair $(\psi,\chi)$ via these projections:
\begin{center}
\begin{tikzpicture}[>=Latex, every node/.style={scale=1.1}]

\node (X) at (0,0) {$W_F$};
\node (Y) at (3,0) {${}^L G$};
\node (Z) at (6,1.5) {${}^L H$};
\node (W) at (6,-1.5) {${}^L(R_{E/F}\mathrm{GL}_1)$};

\draw[->] (X) -- node[above] {$\phi$} (Y);
\draw[->] (Y) -- (Z); 
\draw[->] (Y) -- (W); 

\draw[->, bend left=30] (X) to node[above] {$\psi$} (Z);
\draw[->, bend right=30] (X) to node[below] {$\chi$} (W);

\end{tikzpicture}
\end{center}
Note that $\chi$ is equivalent to a character of $E^{\times}$ via class field theory, by abuse of notation, we also write it as $\chi$. In the other direction, given parameters $(\psi,\chi)$, we define the corresponding $\phi: W_F\longrightarrow {}^L G$ as follows: On $W_E$ it is given by 
\[
x\mapsto (\psi(x),\chi(\bar{z})), x\in W_E,
\]
where $x\mapsto z$ is under the class field theory map. For a fixed representative $w_c\in W_F/W_E$, suppose $\psi(w_c)=a\rtimes \gamma$ and $\chi(w_c)=(x,y)\rtimes \gamma$, then $\phi(w_c)=(a,z)\rtimes \gamma$ where $z$ is such that $\det(a)z^2=xy$.

\subsection{Construction of generic $L$-packets}
\label{Sec5.2}

\begin{theorem}\label{Langlands parameter}
Let $\pi$ be an irreducible admissible generic representation of $G$. Then, there exists a Langlands parameter $\phi_{\pi}: W_F'\longrightarrow {}^LG$ such that for any irreducible admissible generic representation $\rho$ of ${\rm GL}_k(F)$, $k\geq 1$, we have the following equalities:
$$L(s, \rho \times \pi) = L(s, \phi_{\rho} \otimes \phi_{\pi}) \textit{\ \ and \ \ } \gamma(s, \rho \times \pi, \psi_F) = \gamma(s, \phi_{\rho} \otimes \phi_{\pi}, \psi_F),$$
where $\phi_{\rho}$ is the Langlands parameter associated to $\rho$ through the local Langlands correspondence for general linear groups \cite{HT01, H00}.
\end{theorem}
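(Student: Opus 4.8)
The plan is to construct $\phi_\pi$ by first passing to the unitary group $H = \mathbf H_n(F)$, invoking the already-established local Langlands correspondence there, and then assembling the parameter for $G$ from the pair $(\psi, \chi)$ via the explicit recipe recorded in Section~3. Concretely, I would proceed as follows. Given the $\psi$-generic representation $\pi$ of $G$, restrict to $H$ and fix the $\psi$-generic constituent $\pi_1$ of $\pi|_H$ as in \S\ref{GF for GU}. By Mok's work \cite{M15} (together with the identification of generic $L$-packets for $U(n,n)$), $\pi_1$ has an associated tempered/generic Langlands parameter $\psi_{\pi_1}: W_F' \to {}^L H$ matching the Langlands--Shahidi and Artin $\gamma$- and $L$-factors against all generic representations of $\GL_k(E)$ on one side and $\GL_{2n}(\C)$-valued parameters on the other; equivalently, via base change, $\mathrm{BC}^U(\pi_1)$ is an isobaric sum $\sigma_1 \boxplus \cdots \boxplus \sigma_k$ of conjugate-selfdual cuspidals of $\GL_{n_i}(E)$ with the Asai pole condition, and $\psi_{\pi_1}$ is the corresponding ${}^L H$-parameter. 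Separately, the central character $\omega_\pi$ of $\pi$ gives, via class field theory, a character $\chi = \bar\omega_\pi$ of $E^\times$, i.e.\ a parameter $W_F' \to {}^L(R_{E/F}\GL_1)$. I would then define $\phi_\pi$ to be the parameter $W_F' \to {}^L G$ produced from the pair $(\psi_{\pi_1}, \chi)$ by the explicit formula of Section~3 (on $W_E$ it is $x \mapsto (\psi_{\pi_1}(x), \chi(\bar z))$, and on a fixed $w_c \in W_F/W_E$ one picks the square root $z$ with $\det(a) z^2 = xy$). One must check this is well-defined: the square-root ambiguity in $z$ and the choice of constituent $\pi_1$ versus $\pi_1 \circ \mathrm{Ad}(g)$ should produce conjugate parameters, which I expect to follow from the compatibility of the two choices already implicit in the restriction picture of \S\ref{GF for GU}.

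With $\phi_\pi$ in hand, the equalities of $L$- and $\gamma$-factors are proved by reducing the Rankin--Selberg factor for $\rho \times \pi$ on the $G$-side to the corresponding factor for $\rho \times \pi_1$ on the $H$-side, and separately matching the latter against the Artin side. The first reduction is exactly the content of the local base change compatibility: by the definition of local base change lift in \S\ref{GF for GU} (or Theorem~\ref{locallift} in the supercuspidal case, extended by the multiplicativity of the Langlands--Shahidi local factors to general generic $\rho$ and $\pi$), we have $L(s, \rho \times \pi) = L^U(s, (\rho \otimes \bar\omega_\pi) \times \pi_1)$ up to the bookkeeping between $\rho$ as a representation of $\GL_k(F)$ versus $\GL_k(E)$ — here I would use the standard dictionary that twisting by $\bar\omega_\pi$ and inducing/base-changing intertwines the $\GL_k(F)$-factor with the $\GL_k(E)$-factor, so that the relevant object on the $H$-side is $\mathrm{BC}(\rho) \times \pi_1$ or directly $\rho \times \pi_1$ as appropriate; the $\gamma$-factors differ only by the harmless $\lambda(E/F,\psi)$-powers, which cancel in the comparison with the Galois side because the Deligne--Langlands constants satisfy the same inductivity. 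The second reduction — matching $L^U(s, \cdot \times \pi_1)$ with the Artin $L$-function $L(s, \phi_\rho \otimes \psi_{\pi_1})$, and then identifying $L(s, \phi_\rho \otimes \psi_{\pi_1})$ with $L(s, \phi_\rho \otimes \phi_\pi)$ — uses the corresponding equality for unitary groups established in \cite{KK05, M15} together with the observation that, by construction, $\phi_\pi$ restricted appropriately recovers $\psi_{\pi_1}$ (the projection ${}^L G \to {}^L H$ sends $\phi_\pi$ to $\psi_{\pi_1}$), so the tensor product $\phi_\rho \otimes \phi_\pi$ has the same Artin $L$- and $\gamma$-factors as $\phi_\rho \otimes \psi_{\pi_1}$ after accounting for the similitude twist.

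The proof technique for matching local factors is itself the now-standard local-to-global argument of \cite{S90}: embed $\pi$ (resp.\ $\pi_1$) into a globally generic cuspidal automorphic representation of $\mathbf G(\mathbb A_k)$ (resp.\ $\mathbf H(\mathbb A_k)$) with controlled ramification using Theorem~\ref{Global BC lift:GU2}, so that the global base change lift exists and is an isobaric automorphic representation of $\GL_{2n}(\mathbb A_K) \times \GL_1(\mathbb A_K)$; then the global functional equations for the Langlands--Shahidi $L$-functions and for the Rankin--Selberg $L$-functions on $\GL$ (Jacquet--Piatetski-Shapiro--Shalika), combined with the known equality at all unramified and split places (Remark~\ref{equality with LS}) and at the supercuspidal place (Theorem~\ref{locallift} via its construction), force the equality of the $\gamma$-factors at the remaining place by the usual induction on rank. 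The $L$-factor equality then follows from the $\gamma$-factor equality together with the characterization of $L$-factors as the ``denominators'' in terms of tempered data (Langlands classification), or directly by the multiplicativity once the tempered and discrete-series cases are settled.

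The main obstacle I anticipate is \emph{not} the comparison of local factors, which is essentially a transcription of \cite{KK05} through the restriction method of \S\ref{GF for GU}, but rather pinning down $\phi_\pi$ as a genuine ${}^L G$-valued parameter and showing it is \emph{independent} of all choices — most delicately, the square-root choice for $z$ and the choice between $\pi_1$ and its $\mathrm{Ad}(g)$-twist. The subtlety is that the map $(\psi,\chi) \mapsto \phi$ is not canonical on the nose, and one must verify that the two candidate parameters obtained are conjugate in $\widehat G = \GL_{2n}(\C) \times \GL_1(\C)$; I expect this to reduce to the fact that $\mathrm{Ad}(g)$ for $g = \mathrm{diag}(aI_n, I_n)$ acts on the parameter precisely by the outer twist that the $z \mapsto -z$ (or the non-norm) ambiguity induces, so the two sources of ambiguity are matched and cancel — but making this precise, and checking that $\phi_\pi$ is actually a homomorphism (i.e.\ the cocycle relation on $W_F/W_E$ holds for the chosen $z$), is where the real work lies.
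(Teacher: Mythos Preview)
Your overall strategy---pass to $H$, invoke the Langlands correspondence there, then lift the pair $(\psi_{\pi_1},\bar\omega_\pi)$ to a parameter for $G$ via the recipe of Section~3---is exactly what the paper does, but \emph{only in the supercuspidal case}. For non-supercuspidal generic $\pi$ the paper takes a different route: rather than restricting $\pi$ itself to $H$ and appealing to Mok's full correspondence, it builds $\phi_\pi$ inductively from the supercuspidal support via Heiermann's construction \cite{H06a}. Concretely, for a discrete series $\pi_{\rm ds}$ one writes it as a subquotient of ${\rm Ind}_P^G(\pi_{\rm sc}\otimes\chi_\lambda)$ and sets $\phi_{\pi_{\rm ds}}(\gamma,h)=\phi_{\pi_{\rm sc}}(\gamma,1)\phi_{\pi_{\rm sc},\lambda}(h)$; then for general $\pi$ one uses the Langlands classification to write $\pi$ as a Langlands quotient and twists the discrete-series parameter by the semisimple element $s_{M_1}$ corresponding to the Langlands datum. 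The matching of local factors in these inductive steps is supplied by \cite[Theorem~4.4]{H06b}, not by a further local--global argument.

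What each approach buys: your route is more uniform and leans entirely on Mok's black box for $H$, but it leaves you with the well-definedness problem you yourself flag---the square-root ambiguity in $z$ and the choice of constituent $\pi_1$ versus $\pi_1\circ{\rm Ad}(g)$ must be shown to produce $\widehat G$-conjugate parameters, and you have not actually carried this out. The paper's route sidesteps this by only invoking the $(\psi,\chi)$ construction at the supercuspidal level (where the parameter $\phi_{\pi'}$ comes from \cite[Lemma~2.2.1]{M15} applied to a conjugate-self-dual $\phi_{\Pi'}$ of parity $-1$, so the lifting is already pinned down), and then letting Heiermann's machinery handle the passage to discrete series and to general generic $\pi$; the price is that one must cite \cite{H06a,H06b}. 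Your local--global argument for the $\gamma$-factor equality is essentially what underlies Remark~\ref{equality with LS} and Theorem~\ref{locallift}, so that portion of your proposal aligns with the paper, but the paper does not re-run it for each non-supercuspidal $\pi$.
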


\begin{proof}
Let us first assume $\pi$ is supercuspidal. In that case, if $\pi'$ is the irreducible admissible generic constituent of $\pi|H$, it is also supercuspidal.
Let $\Pi'$ be the local lift of $\pi'$ to ${\rm GL}_{2n}(E)$ as in the proof of Theorem~\ref{locallift}. Then, it is of the form $\Pi'=\rho_1 \boxplus \cdots \boxplus \rho_k$, where the $\rho_i$'s are mutually non-isomorphic conjugate self-dual supercuspidal representations of ${\rm GL}_{n_i}(E)$ and satisfying $L(s, \rho_i, r_A \otimes \delta_{E/F})$ has a pole at $s=0$, $1\leq i\leq k$. The Langlands parameter $\phi_{\Pi'}: W_E'\longrightarrow {\GL}_{2n}(\C)$ associated to $\Pi'$ is given by $\phi_{\rho_1} \oplus \cdots \oplus \phi_{\rho_k}$, where $\phi_{\rho_i}$ is the Langlands parameter associated to $\rho_i$ through the local Langlands correspondence for ${\rm GL}_{n_i}(E)$. In the terminology of \cite[\S~2.2]{M15}, the parameter $\phi_{\Pi'}$ is conjugate self-dual with parity $-1$. Let $\phi_{\pi'}: W_F' \rightarrow ^L H$ be the parameter associated to $\phi_{\Pi'}$ via \cite[Lemma 2.2.1]{M15}. If one takes the character $\chi_\kappa$ in loc. cit. to be trivial, then the associated map $\xi_{\chi_\kappa, *}$ from $\Phi(H)$ to $\Phi({\rm GL}_{2n}(E))$ as defined by Mok coincides with our base change map $\mathrm{BC}^U$. In particular, the restriction of $\phi_{\pi'}$ to $W'_E$ recovers $\phi_{\Pi'}$, i.e.,
\[
\phi_{\pi'}|_{W'_E}=\phi_{\Pi'}.
\]
Now, let $\phi_{\pi}: W_F' \rightarrow \ ^L{G}$ be the parameter associated with the pair $(\phi_{\pi'},\bar{\omega}_{\pi})$ in the sense described above. It follows from Theorem \ref{locallift} and Remark \ref{equality with LS} imply that we have
\[
L(s, \rho \times \pi) = L^U(s,( \rho \otimes\bar{\omega}_{\pi}) \times \pi') = L(s, \phi_{\rho} \otimes ({\rm BC}\circ \phi_{\pi})) 
\]
\[
\gamma(s, \rho \times \pi, \psi_F) =  \gamma^{U}(s, (\rho \otimes \bar{\omega}_{\pi}) \times \pi', \psi_F) = \gamma(s, \phi_{\rho} \otimes ({\rm BC}\circ\phi_{\pi}), \psi_F).
\]

For non-supercuspidal admissible representation, we follow the construction of Heiermann \cite{H06a}. Namely, first consider the discrete series case $\pi=\pi_{\rm ds}$. 
There exist a parabolic subgroup $P=MN$ of $G$, an irreducible supercuspidal representation $\pi_{\rm sc}$ of $M$, and a point $\lambda$ in the real vector space $a_M^*=X(M)_F\otimes\R$ such that $\pi_{\rm ds}$ is a subquotient of $\mathrm{Ind}_{P}^{G}\left(\pi_{\rm sc} \otimes \chi_\lambda\right)$. (Here, $X(M)_F$ is the group of rational characters of $M$ and $\chi_\lambda$ is the usual unramified quasi-character associated to the point $\lambda$). If $\phi_{\pi_{\rm sc}}$ is the Langlands parameter attached to $\pi_{\rm sc}$ as above, then the Langlands parameter $\phi_{\pi_{\rm ds}}$ is given by 
\[
\phi_{\pi_{ds}}(\gamma, h)=\phi_{\pi_{sc}}(\gamma, 1) \phi_{\pi_{sc}, \lambda}(h) \textit{ for }\gamma \in W_F \textit{ and } h \in {\rm SL}_2(\mathbb{C}) 
\]
as explained in \cite[Section 6.2]{H06a}. The notation $\phi_{\pi_{sc}, \lambda}(h)$ is explained in loc.cit.

To pass from discrete series to an irreducible admissible representation of $G$, we use Langlands classification. In particular, there is a parabolic subgroup $P_1=M_1N_1$ of $G$, a tempered representation $\pi_t$ of $M_1$ and a point $\lambda_{M_1}$ in $a_{M_1}^*$ in the positive Weyl chamber of $P_1$ such that $\pi$ is the Langlands quotient of the induced representation $\mathrm{Ind}_{P_1}^{G}\left(\pi_t \otimes \chi_{\lambda_{M_1}}\right)$. Furthermore, there is a parabolic subgroup $P_2=M_2N_2$ of ${\bf M}_1$ and a discrete series $\pi_{\rm ds}$ such that $\pi_t \hookrightarrow Ind^{M_1}_{P_2\cap M_1}(\pi_{\rm ds})$. Note that we constructed the Langlands parameter $\phi_{\pi_{ds}}$ that corresponds to $\pi_{\rm ds}$. Then in \cite[Section 6.2]{H06a} the Langlands parameter $\psi_\pi: W_F \times \mathrm{SL}_2(\mathbb{C}) \rightarrow{ }^L G$ is constructed as follows:
\[
(\gamma, h) \mapsto s_{M_1}^{v_F(\gamma)} \phi_{\pi_{ds}}(\gamma, h),
\]
where $s_{M_1}$ is semisimple element in $C({ }^L M_1)^{\circ}$ (connected component of center of ${ }^L M_1$) that corresponds to $\lambda_{M_1}$ through the local Langlands correspondence for torus. Note that we have the equalities of local factors \cite[Theorem 4.4]{H06b}.
\end{proof}

Suppose $\pi$ is an irreducible admissible generic representation of $G$ and $\phi=\phi_{\pi}$ as above. Recall we have the conjectural set $\Pi_{\phi}(G)$, the following result characterizes all generic members in this set in terms of $L$-functions from the Langlands-Shahidi method.
\begin{theorem}\label{generic reps in $L$-packet}
Let $\pi_1$ (resp. $\pi_2$) be an irreducible admissible generic representation of $G$ and let $\phi_{\pi_1}$ (resp. $\phi_{\pi_2}$) be its Langlands parameter as constructed in Theorem \ref{Langlands parameter}. Then, $\phi_{\pi_1} \cong \phi_{\pi_2}$ if and only if for each irreducible admissible generic representation $\rho$ of ${\rm GL}_r(F)$ we have the following:
$$L(s, \rho \times \pi_1) = L(s, \rho \times \pi_2) \textit{\ and \ } \gamma(s, \rho \times \pi_1, \psi_F) = \gamma(s, \rho \times \pi_2, \psi_F). $$ 
\end{theorem}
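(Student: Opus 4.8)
The plan is to deduce this from Theorem~\ref{Langlands parameter} together with the local converse theorem for general linear groups. The forward direction is immediate: if $\phi_{\pi_1}\cong\phi_{\pi_2}$, then by Theorem~\ref{Langlands parameter} applied to each of $\pi_1$ and $\pi_2$, for every irreducible admissible generic $\rho$ of ${\rm GL}_r(F)$ we have $L(s,\rho\times\pi_i)=L(s,\phi_\rho\otimes\phi_{\pi_i})$ and $\gamma(s,\rho\times\pi_i,\psi_F)=\gamma(s,\phi_\rho\otimes\phi_{\pi_i},\psi_F)$ for $i=1,2$; since $\phi_{\pi_1}\cong\phi_{\pi_2}$, the Artin-side factors agree, hence so do the Langlands--Shahidi factors. (One should note that $\phi_\rho\otimes\phi_{\pi_i}$ is unchanged under conjugation of $\phi_{\pi_i}$, so this is well-posed.)

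For the converse, suppose the displayed $L$- and $\gamma$-factor identities hold for all generic $\rho$ on ${\rm GL}_r(F)$, $r\geq 1$. Applying Theorem~\ref{Langlands parameter} again, this translates into
\[
\gamma(s,\phi_\rho\otimes\phi_{\pi_1},\psi_F)=\gamma(s,\phi_\rho\otimes\phi_{\pi_2},\psi_F)
\]
for all such $\rho$. Now I would push this down to the general linear group over $E$ via the base change map: writing $\Pi_i={\rm BC}\circ\phi_{\pi_i}$, a parameter into ${\rm GL}_{2n}(\mathbb{C})\times{\rm GL}_1(\mathbb{C})$ over $W_E'$ (more precisely its ${\rm GL}_{2n}$-component, the restriction of $\phi_{\pi_i}$ to $W_E'$), the equalities of Rankin--Selberg $\gamma$-factors $\gamma(s,\sigma\times\Pi_1,\psi_E)=\gamma(s,\sigma\times\Pi_2,\psi_E)$ should follow for all generic $\sigma$ of ${\rm GL}_m(E)$ — for supercuspidal $\pi_i$ this is exactly the reduction already used in the proof of Theorem~\ref{locallift} and Theorem~\ref{Langlands parameter}, and in general one reduces to the supercuspidal/discrete-series building blocks through the Langlands classification exactly as in the second half of the proof of Theorem~\ref{Langlands parameter}. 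The local converse theorem for ${\rm GL}_{2n}(E)$ \cite{H93} then yields $\Pi_1\cong\Pi_2$ as representations of ${\rm GL}_{2n}(E)$, hence the parameters $\phi_{\pi_1}|_{W_E'}\cong\phi_{\pi_2}|_{W_E'}$ via the local Langlands correspondence for general linear groups. Finally I must also match the ${\rm GL}_1$-component: the similitude/central character data $\bar\omega_{\pi_1}$ and $\bar\omega_{\pi_2}$ must agree, which one reads off from the $\rho=\mathbf{1}$ (or twisting) case of the $\gamma$-factor identity, since the central character of $\pi_i$ is recovered from these factors. Since $\phi_{\pi_i}$ is built from the pair $(\phi_{\pi_i}|_{W_E'},\bar\omega_{\pi_i})$ by the explicit recipe of Section~\ref{Sec5.2}, agreement of both components gives $\phi_{\pi_1}\cong\phi_{\pi_2}$.

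The main obstacle is the converse direction, and specifically the bookkeeping needed to transfer the hypothesis — stated for twists by generic representations of ${\rm GL}_r(F)$ — into twists by generic representations $\sigma$ of ${\rm GL}_m(E)$ in the form required by the local converse theorem over $E$. For supercuspidal $\pi_i$ this is clean because the base change lifts $\Pi_i'$ are (multiplicative) products of supercuspidals and the Langlands--Shahidi factors factor through the Asai-type construction of Section~\ref{sec-LS}; the subtlety in the general case is ensuring that the reduction of $\phi_{\pi_i}$ through discrete series and Langlands quotients (as in \cite{H06a}) is compatible with the factorization of $L$- and $\gamma$-factors, i.e.\ that the equalities of \cite[Theorem 4.4]{H06b} and the multiplicativity of Rankin--Selberg factors let one peel off the induced pieces consistently on both sides. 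Once that reduction is in place, invoking \cite{H93} and the recipe of Section~\ref{Sec5.2} is routine.
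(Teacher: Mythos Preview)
Your approach is essentially the same as the paper's: the forward direction follows directly from Theorem~\ref{Langlands parameter}, and for the converse both you and the paper pass to the local functorial lifts $\Pi_i$ and invoke the local converse theorem for general linear groups \cite{H93} to conclude $\Pi_1\cong\Pi_2$, whence $\phi_{\pi_1}\cong\phi_{\pi_2}$ since the parameter is built from the lift. The paper's argument is terser and does not separately isolate the ${\rm GL}_1$-component or worry about passing from ${\rm GL}_r(F)$-twists to ${\rm GL}_m(E)$-twists --- the latter concern is in fact moot, since the Levi here is $R_{E/F}\GL_k\times{\bf G}_m$, so the twisting representations $\rho$ in the Langlands--Shahidi factors are already representations of $\GL_k(E)$.
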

\begin{proof}
Assume that $\phi_{\pi_1} \cong \phi_{\pi_2}$. Theorem \ref{Langlands parameter} directly tells us the equalities of local factors. 
We now assume the equality of local factors. We have $L(s, \rho \times \Pi_1) = L(s, \rho \times \Pi_2)$ for any irreducible admissible generic representation $\rho$ of ${\rm GL}_r(F)$, where $\Pi_1$ (resp. $\Pi_2$) is local functorial lift of $\pi_1$ (resp. $\pi_2$). Then, local converse theorem for general linear groups implies that $\Pi_1 \cong \Pi_2$ \cite{H93}. Note that construction of Langlands parameter in the proof of Theorem \ref{Langlands parameter} only depends on its local functorial lift. Therefore, $\phi_{\pi_1} \cong \phi_{\pi_2}$.
\end{proof}

This points to a way of defining the generic $L$-packet $\Pi_{\phi}(G)$ under the following assumption.

\begin{assumption}\label{Assumption:LS}
Assume the existence of local factors for non-generic representations $\pi$ that respects Langlands classification.  
\end{assumption}

\begin{definition}[Generic $L$-packets]\label{Def:generic L-packet}
For any fixed irreducible admissible generic representation $\pi_0$ of $G$ with $L$-parameter $\phi=\phi_{\pi_0}$, let $\Pi_{\phi}(G)$ be the set of equivalence classes of irreducible admissible representations $\pi$ such that for each irreducible admissible representation $\rho$ of ${\rm GL}_r(F)$ we have the following:
$$L(s, \rho \times \pi) = L(s, \rho \times \pi_0 ) \textit{\ and \ } \gamma(s, \rho \times \pi, \psi_F ) = \gamma(s, \rho \times \pi_0, \psi_F). $$ 

\end{definition}

\begin{remark}
    One main purpose of the paper is to remove Assumption \ref{Assumption:LS} so that applications of our main theorems, i.e., Theorems \ref{StrongConjG}, \ref{StrongConjG:U} become completely unconditional. We remove it in Section \ref{Removing A}.
\end{remark}
We prove the following property for generic $L$-packets in the case of even unitary similitude groups: 

\begin{lemma}\label{lem:tempered $L$-packet}
If an L-packet $\Pi_{\phi}$ for $G$ contains a tempered representation that is also generic, then all other members in $\Pi_{\phi}$ are tempered as well (i.e., $\Pi_{\phi}$ is a tempered $L$-packet).
\end{lemma}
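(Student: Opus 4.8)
The plan is to reduce the statement to the analogous—and already established—fact on the $\GL_{2n}(E)$ side via the base change map, then transport temperedness back to $G$ using Lemma~\ref{TT}. Let $\Pi_\phi$ contain a member $\pi_0$ that is tempered and generic, and let $\pi$ be any other member of $\Pi_\phi$. By Definition~\ref{Def:generic L-packet}, $\pi$ and $\pi_0$ share the same Rankin--Selberg $L$- and $\gamma$-factors against all $\rho\in\Pi(\GL_r(F))$, $r\geq 1$; in particular $\omega_\pi=\omega_{\pi_0}$ (read off from the $r=1$ factors) and, by the definition of $\mathrm{BC}$ in Definition~\ref{def_BC} together with the multiplicativity of these local factors, $\mathrm{BC}(\pi)\cong\mathrm{BC}(\pi_0)$. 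Since $\pi_0$ is tempered and generic, Theorem~\ref{Langlands parameter} (or its proof via Theorem~\ref{locallift} and \cite{KK05}) shows that $\mathrm{BC}(\pi_0)=(\rho_1\boxplus\cdots\boxplus\rho_k)\otimes\bar\omega_{\pi_0}$ with each $\rho_i$ a (conjugate self-dual) tempered representation of $\GL_{n_i}(E)$; hence $\mathrm{BC}(\pi_0)$, and therefore $\mathrm{BC}(\pi)$, is a tempered representation of $\widetilde{\bf G}(F)$.

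The next step is to conclude that $\pi$ itself is tempered. This is exactly the ``only if'' direction of Lemma~\ref{TT}: if the local functorial lift $\mathrm{BC}(\pi)$ of $\pi$ is tempered, then $\pi$ is tempered. Concretely, one writes $\pi$ in its Langlands data—$\pi$ is the Langlands quotient of $\mathrm{Ind}_{P_1}^G(\pi_t\otimes\chi_{\lambda_{M_1}})$ with $\pi_t$ tempered on $M_1$ and $\lambda_{M_1}$ in the open positive chamber—and observes, from the explicit description of $\mathrm{BC}$ on Levi subgroups in \S\ref{GF for GU} and its compatibility with parabolic induction and unramified twists, that the lift of $\pi$ carries the same exponent data $\lambda_{M_1}$, suitably transported to $\widetilde{\bf G}$. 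If $\lambda_{M_1}\neq 0$ this exponent survives on the $\GL_{2n}(E)$ side, forcing $\mathrm{BC}(\pi)$ to have a non-unitary central exponent along some maximal parabolic, contradicting its temperedness. Hence $\lambda_{M_1}=0$, which means $\pi=\pi_t$ is tempered.

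Running this argument for every $\pi\in\Pi_\phi$ yields that all members of $\Pi_\phi$ are tempered, i.e., $\Pi_\phi$ is a tempered $L$-packet, as claimed.

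The step I expect to be the main obstacle is the passage $\mathrm{BC}(\pi)$ tempered $\implies$ $\pi$ tempered, i.e.\ establishing (and invoking) Lemma~\ref{TT} cleanly. On $G=\mathbf G_n(F)$ one does not yet have the full local Langlands correspondence, so ``tempered'' must be handled through the Langlands classification together with the behaviour of $\mathrm{BC}$ under parabolic induction; the delicate point is to check that the Langlands exponent of $\pi$ is not killed or distorted when one forms $\mathrm{BC}(\pi)$ by restricting to $ZH$, lifting the generic constituent $\pi'$ via \cite{KK05}, and twisting by $\bar\omega_\pi$—in other words, that no cancellation of non-unitary exponents can occur on the $\GL_{2n}(E)$ side. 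Once that compatibility is in hand (it follows from the explicit formulas of \S\ref{GF for GU} and the construction in Theorem~\ref{Langlands parameter}), the rest is formal.
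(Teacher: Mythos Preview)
Your approach routes through base change and Lemma~\ref{TT}, whereas the paper gives a direct $L$-function argument that avoids $\mathrm{BC}$ entirely. There are two concrete issues with your route.

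First, Lemma~\ref{TT} is stated and proved only for \emph{generic} $\pi$; its proof (which appears after Lemma~\ref{lem:tempered $L$-packet} and explicitly refers back to that proof) uses the standard module property \cite{HeM07} to realize the generic $\pi$ as a full induced representation, not merely a Langlands quotient. For an arbitrary member $\pi\in\Pi_\phi$, which need not be generic, you cannot invoke Lemma~\ref{TT} as stated, and there is a logical circularity in doing so anyway. Your ``concretely'' paragraph sketches an extension by tracking Langlands exponents through $\mathrm{BC}$, but this requires (i) a definition of $\mathrm{BC}(\pi)$ for non-generic $\pi$ and (ii) its compatibility with the Langlands classification---both of which are set up only later, in Section~\ref{Removing A} (Definition~\ref{def_BC}, Proposition~\ref{property_Lpacket_GU}(3)). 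So as written, the argument is out of logical order.

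Second, and more to the point: the paper's proof is shorter and uses only what is available at this stage. By Definition~\ref{Def:generic L-packet}, $\pi$ and $\pi_0$ share $L(s,\rho\times\cdot)$ for every $\rho$. If $\pi$ were non-tempered, write it via the Langlands classification as the quotient of $\nu^{r_1}\tau_1\times\cdots\times\nu^{r_m}\tau_m\rtimes\pi_t$ with $r_1>\cdots>r_m>0$ and $m\geq 1$. By Assumption~\ref{Assumption:LS} the $L$-function factors accordingly, and choosing $\rho$ to be the contragredient of $\bar{\omega}_{\pi_0}\widetilde{\widebar{\tau_j}}$ forces a pole of $L(s,\rho\times\pi)$ at $s=r_j>0$. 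But $L(s,\rho\times\pi_0)$ is holomorphic for $\Re(s)>0$ by the tempered $L$-function conjecture \cite{HeO13}, since both $\rho$ and $\pi_0$ are tempered and generic---contradiction. Your $\mathrm{BC}$-based detour, once fully justified, would unwind to essentially this same pole computation; the paper simply performs it directly.
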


\begin{proof}
Let $\pi_0$ be an irreducible tempered generic representation of $G_{n_0}$ in $\Pi_{\phi}$ and suppose that there exists a non-tempered representation $\pi \in \Pi_{\phi}$. Since $\pi$ and $\pi_0$ are in the same $L$-packet, we have the equality of $L$-functions $L(s, \rho \times \pi) = L(s, \rho \times \pi_0)$ for any irreducible admissible representation $\rho$ of ${\rm GL}_s(F)$, $s \geq 1$. 

The Langlands classification implies that $\pi$ can be considered to be the unique quotient of the following induced representation:
$$\nu^{r_1} \tau_1 \times \cdots \times \nu^{r_m} \tau_m \rtimes\pi_t,$$
where $\tau_i$ is a tempered representation of $\GL_{n_i}(F)$, $n_i \geq 1$ for each $i=1, \cdots, m$ with $r_1 > r_2 > \cdots > r_m>0$, and $\pi_t$ is a tempered representation of $G_{n_t}$ ($n_t$ and $n_0$ are in the same parity). Since $\pi$ is non-tempered, $m \geq 1$.

Since $L$-function follows the Langlands classification (Assumption \ref{Assumption:LS}), we have
\[
L(s, \rho \times \pi_0) = L(s, \rho \times \pi) = L(s, \rho \times \pi_t) \displaystyle\prod\limits_{i=1}^{m} L(s+r_i, \rho \times \bar{\omega}_{\pi_0}\tau_i)L(s-r_i, \rho \times \bar{\omega}_{\pi_0}\widetilde{\widebar{\tau_i}} )
\]
for any admissible representation $\rho$ of ${\rm GL}_s(F)$, $s \geq 1$.
If we let $\rho$ be a contragredient representation of $\bar{\omega}_{\pi_0}\widetilde{\widebar{\tau_i'}}$, then $L(s - r_j, \rho \times \bar{\omega}_{\pi_0}\widetilde{\widebar{\tau_i'}})$ has a pole at $s= r_j>0$. This also implies that $L(s, \rho \times \pi_0)=L(s, \rho \times \pi)$ has a pole at $s=r_j>0$ when  $\rho$ is a contragredient representation of $\bar{\omega}_{\pi_0}\widetilde{\widebar{\tau_i'}}$. This contradicts that $L(s, \rho \times \pi_0)$ is holomorphic for $Re(s) >0$ since both $\rho$ and $\pi_0$ are tempered and generic \cite{HeO13}; therefore, $m=0$. We conclude that all members in $\Pi_{\phi}$ are tempered.
\end{proof}

\subsubsection{A weak version of the generic Arthur packet conjectures for even unitary similitude groups}
\label{Sec5.1}

Thanks to \cite[Theorem 5.1]{S11}, our theorem \ref{Langlands parameter} implies Conjecture \ref{Weak G} in the case of even unitary similitude groups. 

\begin{theorem}\label{wConjG:GU}
Let $\psi$ be an Arthur parameter for $G$ and $\phi_{\psi}$ be its corresponding Langlands parameter. Assume that there exists an $L$-packet $\Pi(\phi_{\psi})$ that corresponds to the Langlands parameter $\phi_{\psi}$. If $\Pi(\phi_{\psi})$ has a generic member, then $\phi_{\psi}$ is a tempered Langlands parameter.
\end{theorem}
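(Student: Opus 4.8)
The plan is to reduce the statement for $G={\bf G}_n(F)$ to the corresponding result for general linear groups via the base change / functorial lift machinery already set up. First I would invoke the hypothesis: $\Pi(\phi_{\psi})$ has a generic member $\pi$. By Theorem~\ref{Langlands parameter}, the $L$-parameter $\phi_{\pi}$ of this generic $\pi$ equals (up to equivalence, via the recipe pairing $(\phi_{\pi'},\bar\omega_\pi)$) the parameter $\phi_{\psi}$ defining the packet, and moreover $\phi_\pi$ satisfies
\[
L(s,\rho\times\pi)=L(s,\phi_\rho\otimes\phi_\pi),\qquad \gamma(s,\rho\times\pi,\psi_F)=\gamma(s,\phi_\rho\otimes\phi_\pi,\psi_F)
\]
for all irreducible admissible generic $\rho$ of ${\rm GL}_k(F)$. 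So the content is: \emph{if a generic representation $\pi$ of $G$ has the property that its Langlands--Shahidi $L$- and $\gamma$-factors against all $\mathrm{GL}$-representations agree with the Artin factors of $\phi_\pi$, then $\phi_\psi=\phi_\pi$ is tempered.} This is precisely the form in which \cite[Theorem~5.1]{S11} applies, once one knows the equality of Langlands--Shahidi and Artin $L$-functions for ${\bf G}_n$ — which is exactly what Theorem~\ref{Langlands parameter} provides. Hence the bulk of the proof is simply to quote \cite[Theorem~5.1]{S11} with the input hypothesis of that theorem now verified by Theorem~\ref{Langlands parameter}.

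More concretely, I would carry out the steps as follows. Step one: fix the generic member $\pi\in\Pi(\phi_\psi)$ and let $\phi_\pi$ be its parameter from Theorem~\ref{Langlands parameter}; by Definition~\ref{Def:generic L-packet} (or the characterization in Theorem~\ref{generic reps in $L$-packet}) we have $\phi_\pi\cong\phi_\psi$. Step two: recall that $\phi_\psi=\varphi_\psi$ arises from an Arthur parameter, so by construction it is of the shape $\varphi_\psi(w)=\psi(w,\mathrm{diag}(\|w\|^{1/2},\|w\|^{-1/2}))$ with $\psi|_{W_F'}$ tempered; the question of temperedness of $\varphi_\psi$ is equivalent to the triviality of the ${\rm SL}_2(\C)$-part coming from the Arthur ${\rm SL}_2$ (equivalently, $\psi$ trivial on the supplementary ${\rm SL}_2$). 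Step three: transport the problem to $\mathrm{GL}$ via $\mathrm{BC}$. Since $\pi$ is generic, its local lift $\Pi=\mathrm{BC}(\pi)$ to $\widetilde{\bf G}(F)\cong\mathrm{GL}_{2n}(E)\times\mathrm{GL}_1(E)$ is generic (an isobaric sum of generic, in fact the relevant tempered-type constituents), and the equality of $L$- and $\gamma$-factors upgrades, through the compatibility ${\rm BC}\circ\phi_\pi$ and Remark~\ref{equality with LS}, to statements about $\phi_\Pi=\mathrm{BC}\circ\phi_\pi$. On the $\mathrm{GL}$ side, genericity of $\Pi$ forces $\phi_\Pi|_{W_F'}$ to be tempered as a $\mathrm{GL}$-parameter (generic representations of $\mathrm{GL}$ in the discrete/tempered range have bounded image; this is where the no-poles-in-$\mathrm{Re}(s)>0$ input and the classification of generic $\mathrm{GL}$-representations enter). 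Pulling back along the injective ${\rm BC}$ of $L$-groups then shows $\phi_\pi(W_F)$ is bounded, i.e. $\phi_\pi$ is tempered, hence so is $\phi_\psi$.

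The step I expect to be the genuine obstacle — and the reason the paper needs Theorem~\ref{Langlands parameter} in full strength rather than a soft argument — is \emph{matching the abstractly-defined Langlands--Shahidi $L$-factors with the Artin factors of $\phi_\pi$ across \textbf{all} twisting representations $\rho$, including the ramified ones}, so that the hypothesis of \cite[Theorem~5.1]{S11} is literally met. Theorem~\ref{Langlands parameter} supplies this, but its proof itself threads through the supercuspidal case (via the local lift of Theorem~\ref{locallift} and the local converse theorem \cite{H93}), the discrete-series case (via Heiermann's construction \cite{H06a}), and the general case (via Langlands classification and Assumption~\ref{Assumption:LS}, later removed in Section~\ref{Removing A}); so the honest location of the difficulty is inside Theorem~\ref{Langlands parameter}, and at the level of Theorem~\ref{wConjG:GU} the argument is the short deduction above: feed the now-verified $L$-function equality into \cite[Theorem~5.1]{S11}. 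A minor but necessary bookkeeping point will be checking that the Whittaker datum / additive character $\psi_F$ used to define the generic member is the one with respect to which the Langlands--Shahidi factors in Theorem~\ref{Langlands parameter} are normalized, so that ``has a generic member'' in the hypothesis is the same genericity appearing in \cite[Theorem~5.1]{S11}; this is handled by the standard fact that the set of generic members of an $L$-packet does not depend on the choice of Whittaker datum up to the relevant conjugacy, together with the compatibility of $\psi^{G}$ with $w_0$ recorded in Section~\ref{sec-LS}.
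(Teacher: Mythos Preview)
Your first paragraph is exactly right and matches the paper's proof verbatim: the paper states Theorem~\ref{wConjG:GU} with the single sentence ``Thanks to \cite[Theorem 5.1]{S11}, our Theorem~\ref{Langlands parameter} implies Conjecture~\ref{Weak G} in the case of even unitary similitude groups,'' and gives no further argument. So the deduction you identify --- feed the equality of Langlands--Shahidi and Artin factors from Theorem~\ref{Langlands parameter} into \cite[Theorem~5.1]{S11} --- \emph{is} the entire proof.

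Your ``more concretely'' paragraph, however, drifts away from this and should be dropped. In particular, Step three is not an unpacking of \cite[Theorem~5.1]{S11}; it is closer to the paper's separate proof of the \emph{strong} version (Theorem~\ref{StrongConjG}), which already \emph{assumes} Theorem~\ref{wConjG:GU} as input. Worse, the key claim in Step three --- that genericity of the lift $\Pi$ on the ${\rm GL}$ side forces $\phi_\Pi$ to be tempered --- is false as stated: irreducible generic representations of ${\rm GL}_m(E)$ need not be tempered (any irreducible full parabolic induction from generic data is generic, regardless of exponents). The actual mechanism in \cite{S11} works directly with the shape of $\varphi_\psi$ arising from an Arthur parameter and the analytic properties of the Langlands--Shahidi $L$-functions, not via genericity-implies-temperedness on ${\rm GL}$. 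So keep your first paragraph, delete the second, and you have the paper's proof.
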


We now prove the strong version of the generic Arthur packet conjecture (Conjecture \ref{Strong G}) for even unitary similitude groups. We first study $L$-packets through the local Langlands correspondence. Then we further prove several properties of $L$-packets and Langlands parameters in the following subsections.

\subsubsection{A strong version of the generic Arthur packet conjecture for even unitary similitude groups}
\label{Sec5.3}
We first prove the embedding properties of discrete series of ${\bf G}_n(F)$.

\begin{proposition}\label{GU:embedding}
Let $\pi_{ds}$ be an irreducible discrete series generic representation of $G$. Then $\pi_{ds}$ can be embedded into the following induced representation:
\begin{align}\label{DS:SC:generic}
\pi_{ds} \hookrightarrow \delta_1 \times \cdots \times \delta_r \times \delta_1' \times \cdots \times \delta_k' \rtimes \pi_{cusp}.
\end{align}
where $\delta_i = \delta([\nu^{a_i} \rho_i, \nu^{b_i} \rho_i])$, $a_i \leq 0, a_i + b_i > 0$, $\rho_i$ is an irreducible unitary supercuspidal representation of ${\rm GL}_{n_i}(F)$, $n_i \geq 1$ for $i=1, \ldots, r$, $\delta_j' =  \delta([\nu^{a_{\rho_j'}} \rho_j' ,\nu^{b^{(j)}} \rho_j']))$ are as in \cite[(4.5)]{KM19} (i.e., $\delta_j'$ are from classification of strongly positive representations), $\pi_{cusp}$ is an irreducible supercuspidal generic representation of $G_{n_c}$, $n_c \geq 1$. Furthermore, each $a_{\rho_j'}$ is in $\{ \frac{1}{2}, 1 \}$ for $j = 1, \ldots, k$.
\end{proposition}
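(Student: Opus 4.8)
The strategy is to reduce the embedding statement for $G = \mathbf{G}_n(F)$ to the known structure theory for the unitary group $H = \mathbf{H}_n(F)$, and then transport the resulting embedding back to $G$ via restriction. First I would take $\pi_{ds}'$, the unique $\psi$-generic constituent of $\pi_{ds}|_H$; since $\pi_{ds}$ is discrete series, so is $\pi_{ds}'$ (discrete series is detected by matrix coefficients, and restriction to the finite-index subgroup $ZH$ preserves this — more precisely the central character of $\pi_{ds}$ is unitary exactly when that of $\pi_{ds}'$ is, after twisting by the similitude-induced characters). The embedding of a generic discrete series of the even quasi-split unitary group into an induced representation of the form $\delta_1 \times \cdots \times \delta_r \times \delta_1' \times \cdots \times \delta_k' \rtimes \pi_{cusp}'$, with the $\delta_i$ Zelevinsky segments satisfying $a_i \le 0$, $a_i + b_i > 0$, the $\delta_j'$ the strongly positive pieces of \cite[(4.5)]{KM19}, and $\pi_{cusp}'$ generic supercuspidal, is exactly the content of the Mœglin–Tadić / Matić classification of discrete series applied to $U(n,n)$ (this is where \cite{KM19} enters, and the constraint $a_{\rho_j'} \in \{\tfrac12, 1\}$ comes from the explicit list of strongly positive generic representations there, combined with genericity which forces the segment exponents into the half-integral or integral pattern).

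Next I would lift this embedding from $H$ to $G$. Since $ZH$ has index $2$ in $G$ and $\pi_{ds}|_H = \pi_{ds,1}' \oplus \pi_{ds,1}' \circ \mathrm{Ad}(g)$ is multiplicity-free (by the setup recalled from \cite[Section 6]{BX} before Theorem~\ref{locallift}), Frobenius reciprocity and Mackey theory let me pass between generic representations of $G$ and of $H$ in a way compatible with parabolic induction: a standard module $\delta \rtimes \tau'$ over $H$ is the restriction of a corresponding standard module over $G$ obtained by using the Levi $R_{E/F}\mathrm{GL}_{n_1} \times \cdots \times \mathbf{G}_{n_c}$ of $G$ and replacing $\tau'$ by a suitable similitude extension $\pi_{cusp}$ with $\pi_{cusp}|_H \supset \tau' = \pi_{cusp}'$. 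The supercuspidal base $\pi_{cusp}$ exists and is generic by Theorem~\ref{locallift} (its local lift has the stated form, so it is genuinely supercuspidal on $G$), and genericity is preserved because the chosen Whittaker datum on $G$ restricts to the one on $H$. The embedding $\pi_{ds} \hookrightarrow \delta_1 \times \cdots \rtimes \pi_{cusp}$ then follows by taking $\mathrm{Ind}_H^G$ (or rather $\mathrm{Ind}_{ZH}^G$ after adjusting central characters) of the embedding over $H$ and projecting to the component containing $\pi_{ds}$; the exponents $a_i, b_i, a_{\rho_j'}$ and the shapes of the segments are unchanged since the similitude twist only affects the central/similitude character, not the $\mathrm{GL}$-segment data.

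The main obstacle I expect is the bookkeeping in the last step: making precise that the parabolically induced representations over $G$ and over $H$ really do match under restriction, i.e. that $\mathrm{Ind}_{P\cap H}^{H}(\sigma' \otimes \tau') $ is a constituent of $\bigl(\mathrm{Ind}_{P}^{G}(\sigma \otimes \pi_{cusp})\bigr)|_H$ for the right choice of $\sigma$ (an extension of $\sigma'$ to $\mathrm{GL}_{n_i}(E)$, which for $\mathrm{GL}$ is just $\sigma' = \sigma$ viewed via the Levi) and $\pi_{cusp}$, and conversely that the $\psi$-generic constituent is singled out consistently on both sides. This requires care with central characters — one must track $\bar\omega_{\pi}$ and the isomorphism $G/ZH \cong F^\times/N_{E/F}(F^\times)$ from \eqref{eqn-quotient} — and with the fact that an irreducible generic representation of $G$ may restrict to two constituents over $H$, only one of which is generic. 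A cleaner alternative, which I would pursue if the Mackey-theoretic argument gets unwieldy, is to invoke the local Langlands parameter $\phi_{\pi_{ds}}$ constructed in Theorem~\ref{Langlands parameter} together with the Langlands–Shahidi reducibility analysis: the poles of the $\gamma$-factors $\gamma(s, \rho \times \pi_{ds}, \psi_F)$ directly detect which segments $\delta_i$ and strongly positive pieces $\delta_j'$ occur, and the embedding is then forced by Jacquet-module / Casselman-criterion considerations exactly as in the $\mathrm{Sp}/\mathrm{SO}$ case treated in \cite{KM19}.
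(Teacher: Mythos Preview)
Your approach is plausible but takes a genuinely different route from the paper. The paper does not pass through $H$ at all; it argues directly on $G$ by adapting the inductive ``peel off the lowest segment'' argument of \cite[Section~6]{K15} (written there for $GSpin_{2n+1}$) to the present setting. A point you may have overlooked is that \cite{KM19} already classifies strongly positive representations for the similitude groups $GU$ themselves, not merely for $U$, so the transfer from $H$ to $G$ you propose is unnecessary. Concretely, the paper's proof runs as follows: if $\pi_{ds}$ is strongly positive one is done with $r=0$; otherwise one embeds $\pi_{ds}\hookrightarrow\delta(\Delta_1)\times\cdots\times\delta(\Delta_k)\rtimes\pi_{cusp}$ with the $e(\Delta_i)$ increasing, picks the segment $\Delta_j$ with minimal left endpoint $a_j$, uses minimality to rearrange and obtain $\pi_{ds}\hookrightarrow\delta(\Delta_j)\rtimes\pi_1$ for some irreducible $\pi_1$, shows $\pi_1$ is again discrete series (else the square-integrability criterion fails), and iterates until the remaining piece is strongly positive.

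Your restriction-and-lift strategy could be made to work and has the conceptual appeal of reducing to the unitary case where Mok's results are available, but the Mackey bookkeeping you rightly flag as the main obstacle---matching generic constituents of parabolically induced representations across the index-$2$ subgroup $ZH$, and selecting the correct extension of $\pi_{cusp}'$ to $G$ so that the embedding survives---is genuinely delicate, while the paper's direct argument sidesteps all of it. Your $L$-function alternative via Theorem~\ref{Langlands parameter} and pole analysis is also reasonable in principle but again more elaborate than what the paper actually does.
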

\begin{proof}
The proof is similar to \cite[Section 6]{K15} and we briefly explain the main ideas of the proof instead of repeating similar arguments. In the case when $\pi_{ds}$ is strongly positive, the theorem follows with $r=0$. Suppose that $\pi_{ds}$ is not strongly positive. In the same way as in the proof of \cite[Theorem 5.3]{K15}, we conclude that there exist a sequence of segments $\Delta_1, \ldots, \Delta_k$ satisfying $e\left(\Delta_1\right) \leq \cdots \leq e\left(\Delta_k\right)$ and an irreducible supercuspidal representation $\pi_{cusp} \in R(G_{n_c})$, $n_c \geq 1$, such that we have $\pi_{ds} \hookrightarrow \delta\left(\Delta_1\right) \times \cdots \times \delta\left(\Delta_k\right) \rtimes \pi_{cusp}$. Write $\Delta_i:=\left[\nu^{a_i} \rho_i, v^{b_i} \rho_i\right]$, where $\rho_i \in R(GL)$ is an irreducible unitary supercuspidal representation for $i=1, \ldots, k$. Let $a:=\min \left\{a_i \mid 1 \leq i \leq k\right\}$ and let $j$ be such that $a_j=a$. Then minimality of $a_j$ implies that there exists an irreducible representation $\pi_1 \in R(G_{n_1})$, $n_1 \geq 1$, such that $\pi_{ds} \hookrightarrow \delta\left(\Delta_j\right) \rtimes \pi_1$. We show that $\pi_1$ is a discrete series representation. Suppose that $\pi_1$ is not a discrete series representation. Then there exists an embedding of the form $\pi_1 \hookrightarrow \delta\left(\left[v^{a^{\prime}} \rho, v^{b^{\prime}} \rho\right]\right) \rtimes \pi^{\prime}$, where $a^{\prime}+b^{\prime} \leq 0$. Then, minimality of $a$ again implies that we have $\pi_{ds} \hookrightarrow$ $\delta\left(\left[v^{a^{\prime}} \rho, v^{b^{\prime}} \rho\right]\right) \times \delta\left(\left[v^{a_j} \rho_j, v^{b_j} \rho_j\right]\right) \rtimes \pi^{\prime}$ which is a contradiction since $\pi$ is a discrete series representation. We conclude that $\pi_1$ is also a discrete series representation. If $\pi_1$ is strongly positive, the theorem follows with $r=1$. If not, in the same way as above, $\pi_1$ can be embedded into the representation $\delta\left(\left[v^{a_{j^{\prime}}} \rho_{j^{\prime}}, v^{b_{j^{\prime}}} \rho_{j^{\prime}}\right]\right) \rtimes \pi_2$, where $a_{j^{\prime}} \leq 0, a_{j^{\prime}}+b_{j^{\prime}}>$ 0 and $\pi_2$ is a discrete series representation. We repeat this argument until we get strongly positive. Then, the theorem follows.
\end{proof}

\begin{lemma}\label{TT}
Let $\pi$ be an irreducible admissible generic representation of $G$. Then, $\pi$ is a tempered representation if and only if its local functorial lift $\Pi$ is a tempered representation of ${\rm GL}_s(F)$, $s \geq 1$. 
\end{lemma}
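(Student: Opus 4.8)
The plan is to prove both implications by passing through the local base change lift and the compatibility of $L$- and $\gamma$-factors established in Theorem \ref{Langlands parameter}, combined with the characterization of tempered representations via the absence of poles of $L$-functions in the right half-plane. First I would reduce to the case where $\pi$ is a (standard) Langlands quotient: by Proposition \ref{GU:embedding} and the Langlands classification, write $\pi$ as the unique irreducible quotient of an induced representation $\nu^{r_1}\tau_1 \times \cdots \times \nu^{r_m}\tau_m \rtimes \pi_t$ with $\tau_i$ tempered on ${\rm GL}_{n_i}(F)$, $r_1 > \cdots > r_m > 0$, and $\pi_t$ tempered on $G_{n_t}$. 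On the general linear side, the local functorial lift $\Pi = {\rm BC}(\pi)$ has a parallel description: by the construction in Theorem \ref{Langlands parameter} (which only depends on the functorial lift and is compatible with the Langlands classification by \cite{H06a}), $\Pi$ is the Langlands quotient built from the tempered lift ${\rm BC}(\pi_t)$ together with the shifted segments coming from the $\nu^{r_i}\tau_i$. In particular, $\pi$ is tempered (equivalently $m = 0$) if and only if no such nontrivial exponents $r_i > 0$ appear, if and only if $\Pi$ is tempered.

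The cleaner route, which avoids delicate bookkeeping of exponents on the two sides, is to argue via $L$-functions exactly as in the proof of Lemma \ref{lem:tempered $L$-packet}. Suppose $\Pi$ is tempered but $\pi$ is not, so $m \geq 1$. Since $L$-functions follow the Langlands classification (Assumption \ref{Assumption:LS} / Theorem \ref{Langlands parameter}),
\[
L(s, \rho \times \pi) = L(s, \rho \times \pi_t) \prod_{i=1}^{m} L(s+r_i, \rho \times \bar{\omega}_{\pi}\tau_i)\, L(s-r_i, \rho \times \bar{\omega}_{\pi}\widetilde{\widebar{\tau_i}})
\]
for every irreducible admissible generic $\rho$ of ${\rm GL}_s(F)$. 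Taking $\rho$ to be the contragredient of $\bar{\omega}_{\pi}\widetilde{\widebar{\tau_1}}$ forces a pole of $L(s, \rho \times \pi)$ at $s = r_1 > 0$. But by Theorem \ref{Langlands parameter}, $L(s, \rho \times \pi) = L(s, \phi_\rho \otimes \phi_\pi)$, and since $\Pi$ is tempered its parameter $\phi_\pi$ (equivalently ${\rm BC}\circ\phi_\pi$) is bounded, hence $\phi_\rho \otimes \phi_\pi$ is tempered and $L(s, \phi_\rho \otimes \phi_\pi)$ is holomorphic for ${\rm Re}(s) > 0$ — a contradiction. Hence $m = 0$ and $\pi$ is tempered. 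For the converse, if $\pi$ is tempered, then $\pi'$ (the generic constituent of $\pi|_H$) is tempered, and by the analogous statement for unitary groups \cite{KK05, M15} together with the description of ${\rm BC}(\pi)$ in terms of ${\rm BC}^U(\pi')$ and $\bar\omega_\pi$, each $\sigma_i$ appearing in $\Pi = (\sigma_1 \boxplus \cdots \boxplus \sigma_k)\otimes\bar\omega_\pi$ is (conjugate self-dual) tempered, so $\Pi$ is tempered; alternatively, a tempered parameter $\phi_\pi$ maps under ${\rm BC}$ to a bounded (hence tempered) parameter of ${\rm GL}_{2n}(E)$, whose associated representation is tempered by the general linear local Langlands correspondence.

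The main obstacle I anticipate is the converse direction in the non-supercuspidal case: one must ensure that the ``tempered part'' of the Langlands data for $\pi$ lifts to the tempered part of the Langlands data for $\Pi$, i.e.\ that the construction of $\phi_\pi$ in Theorem \ref{Langlands parameter} via Heiermann's recipe is genuinely compatible with boundedness of parameters and does not inadvertently introduce unbounded factors. This is essentially the statement that base change commutes with the Langlands classification, which follows from the multiplicativity of $\gamma$-factors and the already-established equalities of local factors, but it requires care in matching the real parts of the exponents $\lambda_{M_1}$ with the corresponding data on the ${\rm GL}$ side. Once this compatibility is in hand, both implications are immediate from the half-plane holomorphy criterion for tempered $L$-functions \cite{HeO13} and the local converse theorem \cite{H93}.
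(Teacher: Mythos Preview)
Your argument for the implication ``$\Pi$ tempered $\Rightarrow$ $\pi$ tempered'' is essentially identical to the paper's: both write $\pi$ in its Langlands data (the paper invokes \cite{HeM07} to say the standard module itself is irreducible, but for the $L$-function computation this makes no difference), produce a pole of $L(s,\rho\times\pi)$ at $s=r_1>0$ for a well-chosen tempered $\rho$, and contradict the holomorphy of the tempered $L$-function on ${\rm Re}(s)>0$ via \cite{HeO13}.

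For the converse you take a different route. The paper does \emph{not} pass through the restriction to $H$ or through boundedness of parameters; instead it adapts the proof of \cite[Proposition~8.5]{KK05} directly to $G$, using Theorem~\ref{locallift} (the explicit shape of the lift of a supercuspidal) together with Proposition~\ref{GU:embedding} (the embedding of a discrete series into an induced-from-supercuspidal of controlled shape) to show first that discrete series lift to tempered representations, and then deduces the tempered case. Your Route~A---restrict to $H$, note that the generic constituent $\pi'$ is tempered, apply the already-known unitary-group case of \cite{KK05}, and twist by the unitary character $\bar\omega_\pi$---is correct and arguably more economical, since it recycles the unitary-group result rather than reproving it; the paper's approach is more self-contained within the $GU$ setting and makes explicit use of Proposition~\ref{GU:embedding}. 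Your Route~B via boundedness of $\phi_\pi$ is also valid in principle, but, as you correctly flag, it presupposes that Heiermann's construction in Theorem~\ref{Langlands parameter} sends tempered representations to bounded parameters, which is exactly the content one is trying to establish; so Route~A is the cleaner of your two alternatives. One small inaccuracy: the description $\Pi=(\sigma_1\boxplus\cdots\boxplus\sigma_k)\otimes\bar\omega_\pi$ with supercuspidal $\sigma_i$ is the form of the lift of a \emph{supercuspidal} $\pi$ (Theorem~\ref{locallift}); for general tempered $\pi$ the lift is tempered but need not decompose into supercuspidals in this way.
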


\begin{proof}
Suppose that $\pi$ is a non-tempered admissible generic representation of $G$ and its local functorial lift $\Pi$ is a tempered representation of ${\rm GL}_s(F)$, $s \geq 1$. Then, due to \cite{HeM07}, there exists $r_1 > r_2 > \cdots > r_m>0$ such that $\pi$ can be written as the following induced representation:
\[
\nu^{r_1} \tau_1 \times \cdots \times \nu^{r_m} \tau_m \rtimes \pi_t,
\]
where $\tau_i$ is a tempered representation of ${\rm GL}_{n_i}(F)$, $n_i \geq 1$ for each $i=1, \cdots, m,$ and $\pi_t$ is a tempered generic representation of $G_{n_t}$ for some $n_t < n$. Since $\pi$ is non-tempered, $m \geq 1$.

Now we consider the $L$-function $L(s, \omega_{\pi}\widetilde{\widebar{\tau_j}} \times \pi)$. Since the $L$-functions from the Langlands-Shahidi method are defined by the Langlands classification, for any admissible representation $\rho$ we have the following:
$$L(s, \rho \times \pi) = L(s, \rho \times \pi_t) \displaystyle\prod\limits_{i=1}^{m} L(s+r_i, \rho \times \bar{\omega}_{\pi}\tau_i)L(s-r_i, \rho \times \bar{\omega}_{\pi}\widetilde{\widebar{\tau_i}}).$$
As in the proof of Lemma \ref{lem:tempered $L$-packet}, we conclude that $L(s, \rho \times \Pi)=L(s, \rho \times \pi)$ has a pole at $s=r_j>0$ when $\rho$ is a contragredient representation of $\bar{\omega}_{\pi}\widetilde{\widebar{\tau_j}}$. This contradicts that $L(s, \rho \times \Pi)$ is holomorphic for $Re(s) >0$ since both $\rho$ and $\Pi$ are tempered and generic \cite{HeO13}; therefore, $\pi$ is also a tempered representation. 

For the sufficient condition we exactly follow the arguments in \cite[Proposition 8.5]{KK05} and we skip the proof and briefly mention which tools that we need to apply those arguments. We use Theorem \ref{locallift} and Theorem \ref{GU:embedding} to show the local functorial lift of discrete series of $G$ is tempered. Then it is straightforward to show that the functorial lift of tempered representations are also tempered. 
\end{proof}

Now we are ready to prove a strong version of the generic Arthur packet conjecture in the case of even unitary similitude groups under Assumption \ref{Assumption:LS}. Let $\psi, \phi_{\psi}$ be as in Conjecture \ref{Weak G}.
Let $\Pi(\phi_{\psi})$ be a generic $L$-packet that corresponds to $\phi_{\psi}$ defined in Definition \ref{Def:generic L-packet}. This means that we assume that $\Pi(\phi_{\psi})$ contains a generic member.

\begin{theorem}\label{StrongConjG}
Let $\psi \in \Psi(G)$ be an Arthur parameter for even unitary similitude groups and $\phi_{\psi}$ be its corresponding Langlands parameter. Suppose the $L$-packet $\Pi(\phi_{\psi})$ in Definition \ref{Def:generic L-packet} that corresponds to the $L$-parameter $\phi_{\psi}$ has a generic member. Then $\Pi(\phi_{\psi})$ is a tempered $L$-packet.
\end{theorem}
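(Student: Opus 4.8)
The plan is to combine the weak version of the generic Arthur packet conjecture already available for $\mathbf{G}_n$ (Theorem~\ref{wConjG:GU}) with the structural properties of $L$-packets established just above. First I would invoke Theorem~\ref{wConjG:GU}: since $\Pi(\phi_{\psi})$ has a generic member by hypothesis, the Langlands parameter $\phi_{\psi}$ is itself tempered, i.e.\ $\phi_{\psi}(W_F)$ is bounded in $\widehat{G}$. The goal then reduces to showing that \emph{every} member of $\Pi(\phi_{\psi})$ is tempered, not just that the parameter is tempered. For this the key is Lemma~\ref{lem:tempered $L$-packet}, which says that once $\Pi_{\phi}$ contains one member that is simultaneously tempered \emph{and} generic, all members are tempered. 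So the crux is to produce, inside $\Pi(\phi_{\psi})$, a member that is both tempered and generic.

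The second step is therefore to upgrade ``$\Pi(\phi_{\psi})$ has a generic member'' to ``$\Pi(\phi_{\psi})$ has a generic member which is also tempered.'' Let $\pi_0 \in \Pi(\phi_{\psi})$ be the given generic member. By construction of the generic $L$-packet (Definition~\ref{Def:generic L-packet}) and of its parameter (Theorem~\ref{Langlands parameter}), $\pi_0$ has a local functorial lift $\Pi_0$ to $\mathrm{GL}_{2n}(E)$, and the Langlands parameter $\phi_{\pi_0} = \phi_{\psi}$ restricted to $W'_E$ is exactly $\phi_{\Pi_0}$. Since $\phi_{\psi}$ is tempered, so is its restriction to $W'_E$, hence $\phi_{\Pi_0}$ is a tempered parameter for $\mathrm{GL}_{2n}(E)$; by the local Langlands correspondence for general linear groups this forces $\Pi_0$ to be a tempered representation. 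Now apply Lemma~\ref{TT}: the local functorial lift $\Pi_0$ of the generic representation $\pi_0$ is tempered if and only if $\pi_0$ is tempered. Hence $\pi_0$ is tempered. Thus $\pi_0$ is the desired tempered \emph{and} generic member of $\Pi(\phi_{\psi})$.

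The third and final step is immediate: apply Lemma~\ref{lem:tempered $L$-packet} to $\Pi(\phi_{\psi})$ with the member $\pi_0$, concluding that every member of $\Pi(\phi_{\psi})$ is tempered, i.e.\ $\Pi(\phi_{\psi})$ is a tempered $L$-packet. (Here one uses Assumption~\ref{Assumption:LS} implicitly, as in the statement of Lemma~\ref{lem:tempered $L$-packet}; this assumption is removed later in Section~\ref{Removing A}, which is what makes the final result unconditional.)

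The main obstacle in this chain is the ``only if'' direction of Lemma~\ref{TT} — that temperedness of the $\mathrm{GL}$-lift descends to $\pi_0$ — together with the verification that $\phi_{\psi}$ tempered really does force the lift $\Pi_0$ to be tempered as a \emph{representation} and not merely as a parameter; but both of these are already in hand (Lemma~\ref{TT} via the argument following \cite[Proposition 8.5]{KK05}, Theorem~\ref{locallift}, and Proposition~\ref{GU:embedding}, and the parameter-to-representation passage via the local Langlands correspondence for $\mathrm{GL}$ \cite{HT01, H00}). So the proof is essentially a bookkeeping assembly of Theorem~\ref{wConjG:GU}, Lemma~\ref{TT}, and Lemma~\ref{lem:tempered $L$-packet}, with the only genuine content being the observation that the given generic member can be taken tempered.
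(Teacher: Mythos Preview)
Your proposal is correct and follows essentially the same route as the paper's own proof: invoke Theorem~\ref{wConjG:GU} to get $\phi_{\psi}$ tempered, push this through the base change map to see that the $\mathrm{GL}$-lift of the generic member $\pi_0$ is tempered (via the local Langlands correspondence for $\mathrm{GL}$), descend temperedness to $\pi_0$ via Lemma~\ref{TT}, and then conclude with Lemma~\ref{lem:tempered $L$-packet}. The only cosmetic difference is that the paper spells out the intermediate equalities of local factors $L(s,\rho\times\pi)=L(s,\rho\times\Pi)$ explicitly to verify that $\Pi$ really is the local functorial lift, whereas you absorb this into the reference to Theorem~\ref{Langlands parameter}; also note the lift lands in $\mathrm{GL}_{2n}(E)\times\mathrm{GL}_1(E)$ rather than just $\mathrm{GL}_{2n}(E)$, though this is immaterial.
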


\begin{proof}
Let $\pi$ be a generic member in $\Pi(\phi_{\psi})$. 
Theorem \ref{wConjG:GU} implies that the corresponding $L$-parameter $\phi_{\psi}: W_F' \rightarrow \,^{L}G_n = \widehat{G}_n \rtimes W_F'$ is a tempered $L$-parameter; therefore, the image $\phi_{\psi}(W_F)$ in $\widehat{G}_n$ is bounded. 
Through the stable base change map, we have 
\[
{\rm BC}: \,^{L}{G}_n \hookrightarrow \,^{L}({\rm Res}_{E/F}({\rm GL}_{2n} \times {\rm GL}_1)).
\]
Then, the image ${\rm BC} \circ \phi_{\psi}(W_F)$ in $\widehat{{\rm GL}_{2n} \times {\rm GL}_1}$ is also bounded; therefore, ${\rm BC} \circ \phi_{\psi}$ becomes a tempered $L$-parameter for ${\rm GL}_{2n} \times {\rm GL}_1$. According to the local Langlands correspondence for $GL$ groups \cite{HT01, H00}, there exists an irreducible tempered representation $\Pi$ of ${\rm GL}_{2n}(E) \times {\rm GL}_1(E)$ that corresponds to ${\rm BC} \circ \phi_{\psi}$ such that
\begin{align}\label{E:GL}
\gamma(s, \rho \times \Pi, \psi_F) = \gamma(s, \phi_{\rho} \otimes ({\rm BC} \circ \phi_{\psi}), \psi_F) \textit{\ \ and \ \ } L(s, \rho \times \Pi) = L(s, \phi_{\rho} \otimes ({\rm BC} \circ \phi_{\psi}))
\end{align}
for any irreducible admissible representation $\rho$ of ${\rm GL}_s(F)$, $s \geq 1$ where $\phi_{\rho}$ is the $L$-parameter that corresponds to $\rho$ through the local Langlands correspondence for ${\rm GL}$ groups \cite{HT01, H00}. Here, the local factors on the left-hand side are Rankin-Selberg local factors \cite{JPSS83}, and those on the right-hand side are Artin factors. The equalities \eqref{E:GL} also imply the following:
\[
\gamma(s, \rho \times \pi, \psi_F) = \gamma(s, \phi_{\rho} \otimes \phi_{\psi}, \psi_F) = \gamma(s, \phi_{\rho} \otimes ({\rm BC} \circ \phi_{\psi}), \psi_F)= \gamma(s, \rho \times \Pi, \psi_F)
\]
\[
\text{and }L(s, \rho \times \pi) = L(s, \phi_{\rho} \otimes \phi_{\psi}) = L(s, \phi_{\rho} \otimes ({\rm BC} \circ \phi_{\psi}))= L(s, \rho \times \Pi).
\]
Therefore, $\Pi$ is a local functorial lift of $\pi$.

Since $\Pi$ is tempered, Lemma \ref{TT} implies that $\pi$ is a tempered representation as well. Now, it remains to prove that all other members in $\Pi(\phi_{\psi})$ are tempered as well. Since $\Pi(\phi_{\psi})$ contains a tempered generic representation $\pi$, Lemma \ref{lem:tempered $L$-packet} implies that $\Pi(\phi_{\psi})$ is a tempered $L$-packet.
\end{proof}

We now briefly explain how we can adapt the above idea to prove the generic Arthur packet conjecture in the case of even unitary groups as well.

\begin{theorem}\label{StrongConjG:U}
Let $\psi \in \Psi(H)$ be an Arthur parameter for even unitary groups and $\phi_{\psi}$ be its corresponding Langlands parameter. Suppose the $L$-packet $\Pi(\phi_{\psi})$ that corresponds to the $L$-parameter $\phi_{\psi}$ has a generic member. Then $\Pi(\phi_{\psi})$ is a tempered $L$-packet.    
\end{theorem}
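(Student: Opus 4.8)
The plan is to transcribe the three-step argument of Theorem~\ref{StrongConjG}, replacing the similitude group $G$ by $H={\bf H}_n(F)$; in fact this is the more primitive case, since the similitude results were extracted from it by restriction. Fix a generic member $\pi$ of the $L$-packet $\Pi(\phi_\psi)$. The first step is to invoke the \emph{weak} generic Arthur packet conjecture for even unitary groups. This rests on the equality of the Langlands--Shahidi $L$- and $\gamma$-factors with the Artin factors for $U(n,n)$, which one obtains by attaching to every irreducible admissible generic $\pi$ of $H$ a Langlands parameter $\phi_\pi\colon W_F'\to {}^LH={\rm GL}_{2n}(\C)\rtimes\Gamma_F$ exactly as in Theorem~\ref{Langlands parameter}: in the supercuspidal case via the local lift of \cite[Theorem 8.4]{KK05} together with the local converse theorem \cite{H93}, then passing to discrete series via Heiermann \cite{H06a}, and finally to arbitrary generic $\pi$ through the Langlands classification. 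With this equality in hand, \cite[Theorem 5.1]{S11} gives that $\phi_\psi$ is a tempered parameter, so $\phi_\psi(W_F)$ is bounded in $\widehat{H}={\rm GL}_{2n}(\C)$.

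Next I would transfer to the general linear side. Composing with the stable base change ${\rm BC}^{U}\colon {}^LH\hookrightarrow {}^L\widetilde{H}={}^L({\rm Res}_{E/F}{\rm GL}_{2n})$ keeps the image of $W_F$ bounded, so ${\rm BC}^{U}\circ\phi_\psi$ is a tempered parameter for ${\rm Res}_{E/F}{\rm GL}_{2n}$; by the local Langlands correspondence for ${\rm GL}_{2n}(E)$ \cite{HT01, H00} it corresponds to an irreducible tempered representation $\Pi$ of ${\rm GL}_{2n}(E)$ with $\gamma(s,\rho\times\Pi,\psi_F)=\gamma(s,\phi_\rho\otimes({\rm BC}^{U}\circ\phi_\psi),\psi_F)$ and $L(s,\rho\times\Pi)=L(s,\phi_\rho\otimes({\rm BC}^{U}\circ\phi_\psi))$ for all irreducible admissible $\rho$ of ${\rm GL}_s(F)$. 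Chaining these with the Step~1 identities $\gamma(s,\rho\times\pi,\psi_F)=\gamma(s,\phi_\rho\otimes\phi_\psi,\psi_F)$ and $L(s,\rho\times\pi)=L(s,\phi_\rho\otimes\phi_\psi)$ identifies $\Pi$ as a local functorial lift of $\pi$. Since $\Pi$ is tempered, the unitary-group analog of Lemma~\ref{TT} --- namely \cite[Proposition 8.5]{KK05} --- forces $\pi$ to be tempered. Then $\Pi(\phi_\psi)$ contains a tempered generic member, and the tempered property of unitary $L$-packets, which is built into Mok's explicit description \cite[Theorem 2.5.1]{M15} (equivalently, the unitary analog of Lemma~\ref{lem:tempered $L$-packet}), propagates temperedness to every member, so $\Pi(\phi_\psi)$ is a tempered $L$-packet.

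I expect the real content to be concentrated in the first step: one must genuinely set up the generic local Langlands correspondence for $U(n,n)$ with the correct $L$- and $\gamma$-factor compatibility against all ${\rm GL}_s(F)$, i.e.\ the exact analog of Theorem~\ref{Langlands parameter} for $H$. In the supercuspidal range this is a converse-theorem argument over the local lift of \cite{KK05, H93}; for non-supercuspidal generic representations it depends on Heiermann's parameter construction \cite{H06a} and the multiplicativity of the local factors recorded in Remark~\ref{equality with LS}. Everything downstream is, by design, a formal transcription of the similitude argument, so the only genuine obstacle is ensuring that all of \cite{KK05} (local lifts, base change, temperedness of lifts) is available in the form needed for $U(n,n)$.
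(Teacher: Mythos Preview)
Your proposal is correct and follows essentially the same three-step route as the paper's own proof. Two small sharpenings: the paper cites \cite[Proposition 8.7]{KK05} directly for the equality of Langlands--Shahidi and Artin factors in Step~1 (rather than reconstructing it), and be aware that \cite[Proposition 8.5]{KK05} supplies only the forward direction of Lemma~\ref{TT} (tempered $\pi$ $\Rightarrow$ tempered lift) --- the direction you actually invoke ($\Pi$ tempered $\Rightarrow$ $\pi$ tempered) requires the $L$-function pole argument from the first half of the proof of Lemma~\ref{TT}, so the correct reference is the unitary analogue of that argument, not Proposition~8.5 alone.
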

\begin{proof}
The first step, i.e., equality of $L$-functions from the Langlands-Shahidi method and Artin $L$-functions is proved in \cite[Proposition 8.7]{KK05}. Therefore, a weak version of the generic Arthur packet conjecture for $H$ is true due to \cite[Theorem 5.1]{S11}. It now remains to prove a strong version of the conjecture. Let us point out three important steps of the proof. First, the construction of Langlands parameter for $H$ is already described in the proof of Theorem \ref{Langlands parameter}. Second, we define $L$-packets exactly as in Definition \ref{Def:generic L-packet} with the same assumption \ref{Assumption:LS}. Third, Lemmas \ref{lem:tempered $L$-packet} and \ref{TT} can be generalized to even unitary groups since it depends on the properties of $L$-functions from the Langlands-Shahidi methods and classification of discrete series, which are available for $H$. Therefore, we can apply the argument in the proof of Theorem \ref{StrongConjG} to the case of even unitary groups and this completes the proof.
\end{proof}

\section{Removing Assumption \ref{Assumption:LS}}\label{Removing A}
The purpose of this section is to make Theorems \ref{StrongConjG} and \ref{StrongConjG:U} unconditional. Namely, we define generic $L$-packets for ${\bf G}_n$ so that we can remove Assumption \ref{Assumption:LS} for $G:={\bf G}_n(F)$ and $H:={\bf H}_n(F)$. 

\subsection{$L$-packets for $H$}
Let us recall $L$-packets for $H$ defined in \cite[Theorem 2.5.1(b)]{M15}. Briefly, for an Arthur parameter $\psi \in \Psi(H)$, the Arthur packet $\Pi_{\psi}$ is defined in \cite[Theorem 2.5.1(a)]{M15}. This includes definitions of $L$-packets that correspond to tempered Langlands parameter since $\psi$ becomes a tempered Langlands parameter when it is trivial on the second $SL_2(\mathbb{C})$-factor. Furthermore, $L$-packets, in general, are defined as follows: 
Recall in \cite{A82} that a member $\phi$ of $\Phi(H)$ determines a commuting pair $\phi_t \in \Phi_{temp}(H)$ and a $\phi_{+} \in \Phi(H)$ that satisfies the following:
\begin{equation}\label{L-parameter_decomp}
\phi(w)=\phi_t(w) \phi_{+}(w) \quad\left(w \in W_F'\right),
\end{equation}
The centralizer of the image of $\phi_{+}$ in ${ }^L H$ is a Levi subgroup ${ }^L  M_{H}$. Note that $\phi_{+}$ will be a map into ${ }^L A_{H}$, where ${\bf A}_{H_n}$ is the split component of ${\bf M_{H_n}}$ 
Note also that the image of $\phi_t$ is in ${ }^L M_{H}$ since $\phi_t$ and $\phi_{+}$ commute. Let $\Pi_{\phi_t}$ be a tempered $L$-packet that corresponds to $\phi_t$ through \cite[Theorem 2.5.1(b)]{M15}. Let $\nu: A_{H} \rightarrow \mathbb{C}^*$ be a character that corresponds to $\phi_{+}$. For $\pi_t \in \Pi_{\phi_t}$, let $I(\nu, \pi_t)=\operatorname{Ind}_{P}^{G} (\pi_t \otimes \nu)$. We assume that this is a standard module after the conjugation of Weyl group elements. Then, $L$-packets for $H$ in general are defined as follows:
\begin{definition}\label{def_Lpacket}
Let $\phi, \phi_t, \phi_{+}, \Pi_{\phi_t}$, and $\nu$ be as above. $L$-packet $\Pi_{\phi}$ is defined as follows:
\[
\Pi_{\phi}=\{J(\nu, \pi_t) \mid \pi_t \in \Pi_{\phi_t}\}
\]
where $J(\nu, \pi_t)$ is unique Langlands quotient of a standard module $I(\nu, \pi_t)$.
\end{definition}

\subsection{Base change lifts and $L$-packets for $G$}
We describe global stable base change lifts for ${\bf G}_n(\mathbb{A}_k)$, where $k$ is a number field. This is discussed in \cite[Lemma 5.4]{KK08} for globally generic cuspidal representations. With Mok's work \cite{M15} in hand, the proof of this lemma may now be extended to any cuspidal automorphic representation. Namely, we have

\begin{theorem}\label{sbc:non-generic:GU}
Let $\underline{\pi}$ be an irreducible cuspidal representation of ${\bf G}_n(\mathbb{A}_k)$ and let $\omega_{\underline{\pi}}$ be its central character. By \cite[Proposition 1.8.1]{HL}, the restriction $\underline{\pi}|_{\bf H(\mathbb{A}_k)}$ is a direct sum of cuspidal representations of ${\bf H}(\mathbb{A}_k)$. Let $\underline{\pi}^{\prime}$ be an irreducible cuspidal constituent of $\underline{\pi}|_{\bf H(\mathbb{A}_k)}$. Then the stable base change lift of $\underline{\pi}$ is given by ${\rm BC}^{U}(\underline{\pi}^{\prime}) \otimes \bar{\omega}_{\underline{\pi}}$, where ${\rm BC}^{U}(\underline{\pi}^{\prime})$ is the stable base change lift of $\underline{\pi}^{\prime}$ as established in \cite{M15}. It is denoted by ${\rm BC}(\underline{\pi})$. 
\end{theorem}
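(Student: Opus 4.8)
Looking at Theorem \ref{sbc:non-generic:GU}, the statement is about constructing global stable base change lifts for cuspidal automorphic representations of ${\bf G}_n(\mathbb{A}_k)$ using the restriction method, extending the known result for globally generic representations to the general cuspidal case.

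My proof plan follows the structure of \cite[Lemma 5.4]{KK08}. First, I would invoke \cite[Proposition 1.8.1]{HL} to decompose $\underline{\pi}|_{{\bf H}(\mathbb{A}_k)}$ as a finite direct sum of irreducible cuspidal automorphic representations of ${\bf H}(\mathbb{A}_k)$, and fix one irreducible constituent $\underline{\pi}'$. Next, I would apply Mok's work \cite{M15}, specifically \cite[Theorem 2.5.1]{M15} on the existence of stable base change lifts for arbitrary (not necessarily generic) cuspidal automorphic representations of quasi-split even unitary groups, to obtain ${\rm BC}^U(\underline{\pi}')$, an isobaric automorphic representation of $\mathrm{GL}_{2n}(\mathbb{A}_K)$. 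I would then set $\Pi := {\rm BC}^U(\underline{\pi}') \otimes \bar{\omega}_{\underline{\pi}}$ and verify that this is a stable base change lift of $\underline{\pi}$: at almost all places this reduces to the unramified computation of Remark \ref{rmk_unr_lift} and \S\ref{sec-unr}, which shows that $\Pi_v$ is the local base change lift of $\underline{\pi}_v$ when everything is unramified, exactly as in the generic case.

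The main work is to confirm that $\Pi$ is well-defined independently of the choice of cuspidal constituent $\underline{\pi}'$, and that the central character twist $\bar{\omega}_{\underline{\pi}}$ is forced. For the former, if $\underline{\pi}''$ is another irreducible constituent of $\underline{\pi}|_{{\bf H}(\mathbb{A}_k)}$, then $\underline{\pi}''$ is a $\mathrm{GL}_1$-type twist (an $\mathrm{Ad}(g)$-conjugate) of $\underline{\pi}'$, and the stability of the base change lift under such conjugation — together with the fact that ${\rm BC}^U$ is insensitive to the relevant twists — gives ${\rm BC}^U(\underline{\pi}'') \cong {\rm BC}^U(\underline{\pi}')$; this mirrors the argument in \cite[Lemma 5.4]{KK08} and \cite[Section 2]{KK08}. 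For the latter, I would argue as in the proof of Theorem \ref{Global BC lift:GU2}: any stable base change lift of $\underline{\pi}$ must restrict compatibly to a weak base change lift of $\underline{\pi}'$, and multiplicity one for idele class characters (applied to central characters, via \cite[Section 2]{KK08}) forces the $\mathrm{GL}_1$-factor to be $\bar{\omega}_{\underline{\pi}}$. Finally, strong multiplicity one for isobaric automorphic representations of $\mathrm{GL}_N$ (Jacquet--Shalika) upgrades the almost-everywhere agreement to agreement at every place, establishing that ${\rm BC}(\underline{\pi}) = {\rm BC}^U(\underline{\pi}') \otimes \bar{\omega}_{\underline{\pi}}$ is the unique stable base change lift.

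The principal obstacle I anticipate is the well-definedness and the matching of local components at the ramified places: one must know that Mok's local base change for unitary groups at ramified places is compatible with the passage from ${\bf H}_n(F_v)$ to ${\bf G}_n(F_v)$ via restriction, i.e., that $\Pi_v \otimes \chi_v$ genuinely satisfies the defining local-factor identities of a local base change lift for $\underline{\pi}_v$ (not merely a weak lift). In the generic case this came from the Langlands--Shahidi $\gamma$-factor identities and the local converse theorem \cite{H93}; in the general case one instead leans on Mok's endoscopic character identities and the compatibility of restriction with Arthur's local classification. Once the local picture is secured at all places, the global statement follows by the multiplicity-one / strong multiplicity-one machinery as above, exactly paralleling Theorems \ref{locallift} and \ref{Global BC lift:GU2}.
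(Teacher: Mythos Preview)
Your proposal is correct and follows essentially the same approach as the paper: restrict to ${\bf H}$, apply Mok's base change ${\rm BC}^U$ in place of the generic-case lift, twist by $\bar{\omega}_{\underline{\pi}}$, and verify compatibility via the commutative square relating ${}^LG$, ${}^LH$, ${}^L\widetilde{G}$, ${}^L\widetilde{H}$ exactly as in \cite[Lemma 5.4]{KK08}. The paper's own proof is far terser than yours---it simply sets up the diagram and says ``follow exactly as in the proof of \cite[Lemma 5.4]{KK08}''---so your discussion of well-definedness, the forced central-character twist, and the ramified-place compatibility is a reasonable unpacking of what that citation entails rather than a different route.
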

\begin{proof}
We follow exactly as in the arguments of \cite[Lemma 5.4]{KK08} and we briefly mention how it can be generalized to non-generic representations. Recall in Subsection \ref{GF for GU} that ${\rm BC}: {}^L G \rightarrow {}^L\widetilde{
G}$ (resp. ${\rm BC}^{U}:$ ${}^LH \rightarrow {}^L\widetilde{H}$) is the stable base change map for even unitary similitude groups (resp. even unitary groups). Then ${\rm BC}^{U}(\underline{\pi}^{\prime})$ is the Langlands functorial lift of $\underline{\pi}^{\prime}$ through the stable base change map ${\rm BC}^{U}$ \cite{M15}. We have the following diagram:

\[
\begin{tikzcd}
{}^LG \arrow[r] \arrow[d, "{\rm BC}"]
& {}^LH \arrow[d, "{\rm BC}"] \\
{}^L\widetilde{G} \arrow[r]
& {}^L\widetilde{H}
\end{tikzcd}
\]
where the horizontal arrows are natural projections (over the Galois group) onto the corresponding $\GL_{2n}$ factor. We now with this setup follow exactly in the proof of \cite[Lemma 5.4]{KK08} and we conclude that the stable base change lift of $\underline{\pi}$ is given by ${\rm BC}^{U}(\underline{\pi}^{\prime}) \otimes \bar{\omega}_{\underline{\pi}}$.
\end{proof}

We now consider the local case. Let $\pi$ be an irreducible admissible unitarizable representation of $G$ and $\omega_{\pi}$ be its central character. First, \cite[Theorem, Appendix 1] {H83} implies that there exist a number field $k$ and an irreducible cuspidal automorphic representation $\underline{\pi}$ of ${\bf G}_n(\mathbb{A}_k)$ such that $\underline{\pi}_v = \pi$. Let $\underline{\pi}^{\prime}$ be an irreducible cuspidal constituent of $\underline{\pi}|_{\bf H(\mathbb{A}_k)}$. Theorem \ref{sbc:non-generic:GU} implies that ${\rm BC}(\underline{\pi}) = {\rm BC}^{U}(\underline{\pi}^{\prime}) \otimes \bar{\omega}_{\underline{\pi}}$. Therefore, ${\rm BC}(\pi) = {\rm BC}^{U}(\pi^{\prime}) \otimes \bar{\omega}_{\pi}$, where $\pi^{\prime} = \underline{\pi}^{\prime}_v$.   
Therefore, following Theorem \ref{sbc:non-generic:GU}, we now naturally define the local base change lifts for $G$ which is independent of the existence of global stable base change lift:

\begin{definition}\label{def_BC}
Let $\pi$ be an irreducible admissible unitarizable representation of $G$ and $\omega_{\pi}$ be its central character. Let $\pi'$ be an irreducible constituent of $\pi | H$. Then the stable base change lift ${\rm BC}(\pi)$ of $\pi$ is definend as
${\rm BC}^{U}(\pi') \otimes \bar{\omega}_{\pi}$.
\end{definition}

Accordingly, We also define $L$-packets using the local base change lifts.
\begin{definition}\label{Def_L-packets}
Let $\pi_0$ be an irreducible admissible representation of $G$. Then we define a local $L$-packet that contains $\pi_0$ as a set of irreducible admissible representations $\pi$ of $G$ such that 
\begin{enumerate}
    \item $\omega_{\pi} = \omega_{\pi_0},$
    \item ${\rm BC}(\pi) \cong {\rm BC}(\pi_0).$ 
\end{enumerate}
\end{definition}

Furthermore, we can define local factors using the definition of local base change lift as follows:

\begin{definition}\label{Lfunction_GU}
Let $\pi$ be an irreducible admissible unitarizable representation of $G$. Then for any irreducible admissible representation $\sigma$ of $\GL(E)$ we define local factors as follows:
\[
\begin{aligned}
& \gamma(s, \sigma \times \pi, \psi)=\lambda(E / F, \psi)^{2mn} \gamma(s, \sigma \times {\rm BC}(\pi), \psi \circ \operatorname{Tr}_{E / F}), \\
& L(s, \sigma \times \pi)=L(s, \sigma \times {\rm BC}(\pi)),
\end{aligned}
\]
Here, local factors in the R.H.S. are Rankin-Selberg local factors or the ones from the Langlands-Shahidi method \cite{JPSS83, S84}.
\end{definition}

We now prove several important properties of $L$-packets such as Shahidi's conjecture. 

\begin{proposition}\label{property_Lpacket_GU}

\begin{enumerate}
    \item An $L$-packet for $ G$ consists of finite members.
    \item A tempered $L$-packet contains a generic representation.
    \item $L$-funtions follow the Langlands classification.
    \item $L$-functions defined in Definition \ref{Lfunction_GU} agree with $L$-functions from the Langlands-Shahidi method in the generic case. 
\end{enumerate}
\end{proposition}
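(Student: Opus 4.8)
The plan is to establish the four assertions of Proposition \ref{property_Lpacket_GU} by transferring the corresponding known facts for even unitary groups (from \cite{M15}) through the base change construction of Definition \ref{def_BC}, together with the local functoriality results already proved in Section 3. Throughout I use the exact sequence $1\to {\bf H}_n\to {\bf G}_n\xrightarrow{\eta}\GL_1\to 1$ and the fact that $ZH\subset G$ has index $2$, so that for any $\pi\in\Pi(G)$ the restriction $\pi|_H$ is multiplicity-free and decomposes into at most two $H$-constituents, permuted by $\mathrm{Ad}(g)$ with $g=\mathrm{diag}(aI_n,I_n)$, $a\notin N_{E/F}(E^\times)$.

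\textbf{Part (1): finiteness.} Suppose $\pi,\pi_0$ lie in the same $L$-packet, so $\omega_\pi=\omega_{\pi_0}$ and ${\rm BC}(\pi)\cong{\rm BC}(\pi_0)$, i.e. ${\rm BC}^U(\pi')\cong{\rm BC}^U(\pi_0')$ for the chosen $H$-constituents $\pi',\pi_0'$. By Mok's results \cite{M15}, the fibre of ${\rm BC}^U$ over a fixed parameter is a finite $L$-packet $\Pi_{\phi}$ for $H$; hence $\pi'$ ranges over a finite set. For each such $\pi'$, the representations $\pi$ of $G$ with $\pi|_H\supset\pi'$ and fixed central character $\omega_{\pi_0}$ form a finite set: indeed $\pi$ is a constituent of $\mathrm{Ind}_H^{ZH}(\pi')$ extended to $G$, and the number of such extensions/inductions is bounded by $[G:ZH]\cdot|\widehat{G/ZH}|=2\cdot|F^\times/N_{E/F}(E^\times)|<\infty$ by \eqref{eqn-quotient}, with the central character constraint cutting this down further. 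Combining, the $L$-packet is finite.

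\textbf{Parts (2) and (4): Shahidi's conjecture and compatibility in the generic case.} For a tempered $L$-packet $\Pi_\phi$ of $G$, pick any $\pi_0\in\Pi_\phi$; then $\pi_0'$ lies in a tempered $L$-packet $\Pi_{\phi_t}$ of $H$, which by \cite[Theorem 2.5.1(b)]{M15} contains a $\psi$-generic member $\pi_1'$. I would then show that the $G$-representation $\pi_1$ with $\pi_1|_H\supset\pi_1'$ and $\omega_{\pi_1}=\omega_{\pi_0}$ is $\psi^G$-generic: this is the content of the framework of \cite[Section 6]{BX} recalled in Section 3, since genericity of an irreducible representation of $G$ is equivalent to genericity of its distinguished constituent upon restriction to $H$. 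For (4), when $\pi$ is generic the local base change lift ${\rm BC}(\pi)={\rm BC}^U(\pi')\otimes\bar\omega_\pi$ coincides (by Theorem \ref{locallift} and its global/unramified compatibility in Remark \ref{equality with LS}) with the Langlands--Shahidi lift, and then Definition \ref{Lfunction_GU} reproduces exactly the equalities $L(s,\sigma\times\pi)=L^U(s,(\sigma\otimes\bar\omega_\pi)\times\pi')=L(s,\sigma\times{\rm BC}(\pi))$ already recorded in the proof of Theorem \ref{Langlands parameter}; the $\gamma$-factor identity is handled identically with the $\lambda(E/F,\psi)^{2mn}$ normalization.

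\textbf{Part (3): compatibility with Langlands classification.} Write the Langlands data of a general $\pi$ as the unique quotient of $\nu^{r_1}\tau_1\times\cdots\times\nu^{r_m}\tau_m\rtimes\pi_t$ with $\pi_t$ tempered and $r_1>\cdots>r_m>0$. I would show ${\rm BC}(\pi)$ is the Langlands quotient of the corresponding induced representation on $\GL_{2n}(E)\times\GL_1(E)$, with Langlands data $\{\nu^{r_i}(\tau_i\otimes\bar\omega_{\pi_t}),\ \nu^{-r_i}(\widetilde{\bar\tau_i}\otimes\bar\omega_{\pi_t})\}$ together with the data of ${\rm BC}(\pi_t)$; this follows because base change commutes with parabolic induction (the relevant Levi of $G$ is $R_{E/F}\GL_{n_i}\times{\bf G}_m$ and ${\rm BC}$ restricts on it to the product of identity maps on $\GL_{n_i}(E)$ with a base change on the smaller ${\bf G}_m$), and standard modules go to standard modules. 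Then Definition \ref{Lfunction_GU} forces the multiplicative factorization $L(s,\rho\times\pi)=L(s,\rho\times\pi_t)\prod_{i}L(s+r_i,\rho\times\bar\omega_\pi\tau_i)L(s-r_i,\rho\times\bar\omega_\pi\widetilde{\bar\tau_i})$, which is precisely the Langlands classification formula used in Lemmas \ref{lem:tempered $L$-packet} and \ref{TT}.

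\textbf{Main obstacle.} The delicate point is part (2): one must verify that the distinguished $\psi$-generic constituent of $\pi|_H$ actually arises from a $\psi^G$-generic $\pi$ of $G$ with the prescribed central character, rather than from some twist, and that this $\pi$ indeed lies in the $L$-packet $\Pi_\phi$ as defined by Definition \ref{Def_L-packets} (both conditions $\omega_\pi=\omega_{\pi_0}$ and ${\rm BC}(\pi)\cong{\rm BC}(\pi_0)$). This requires keeping careful track of how the similitude character $\eta$ and the central characters interact under restriction and induction along $1\to H\to G\to\GL_1\to 1$, exactly as in \cite[Section 6]{BX} and \cite[Section 2]{KK08}; the identity $\omega_{{\rm BC}(\pi)}(z)=\omega_\pi(z/\bar z)$ and the relation $\chi=\bar\omega_\pi$ are the bookkeeping tools that make it go through.
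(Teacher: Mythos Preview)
Your proposal is correct and follows essentially the same strategy as the paper's proof: reduce each assertion to the corresponding known fact for $H$ via the restriction $\pi\mapsto\pi'$ and the relation ${\rm BC}(\pi)={\rm BC}^U(\pi')\otimes\bar\omega_\pi$. In particular, your treatment of the ``main obstacle'' in part (2) matches the paper almost exactly: the paper extends the pair $(\pi_g',\omega_{\pi_t})$ from $ZH$ to $G$ by Frobenius reciprocity (using $[G:ZH]<\infty$) to produce a generic $\pi_g$ with the correct central character, and then checks ${\rm BC}(\pi_g)\cong{\rm BC}(\pi_t)$ from ${\rm BC}^U(\pi_g')\cong{\rm BC}^U(\pi_t')$ --- precisely the bookkeeping you flag.

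The only noticeable difference is in part (3). You argue directly that ${\rm BC}$ commutes with parabolic induction and hence sends standard modules to standard modules on $\GL_{2n}(E)\times\GL_1(E)$, then read off the multiplicative $L$-function formula from the Rankin--Selberg side. The paper instead passes through $H$: it observes that $(\nu^{r_1}\tau_1\times\cdots\rtimes\pi_t)|_H=\nu^{r_1}\tau_1\times\cdots\rtimes(\pi_t|_{H_{n_t}})$, deduces that some $\pi'\subset\pi|_H$ is the Langlands quotient of $\nu^{r_1}\tau_1\times\cdots\rtimes\pi_t'$ for a constituent $\pi_t'\subset\pi_t|_{H_{n_t}}$, and then invokes the Langlands-classification compatibility already known for $H$ (via Mok and the GL side). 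Both routes yield the same factorization; your argument is marginally more direct, while the paper's route makes explicit that the result is inherited from the unitary-group case rather than reproved.
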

\begin{proof}
We first prove (1), i.e., finiteness of $L$-packets for $G$. Let $\pi$ be an irreducible admissible unitarizable representation of $G$ and $\Pi$ be an $L$-packet that contains $\pi$.
Let $\pi_1$ be one member in $\Pi$ and let $\pi'$ (resp. $\pi_1'$) be an irreducible constituent in $\pi|_{H}$ (resp. $\pi_1|_{H}$) as in Definition \ref{def_BC}. Then, Theorem \ref{sbc:non-generic:GU} and Definition \ref{Def_L-packets} imply that ${\rm BC}^{U}(\pi') \otimes \bar{\omega}_{\pi} \cong {\rm BC}(\pi) \cong {\rm BC}(\pi_1) \cong {\rm BC}^{U}(\pi_1') \otimes \bar{\omega}_{\pi_1} \cong {\rm BC}^{U}(\pi_1') \otimes \bar{\omega}_{\pi}$ since the central characters of the members in $\Pi$ are same. Therefore, for any irreducible admissible representation $\sigma$ of ${\rm GL}(F)$ we have
\[
\begin{aligned}
& \gamma(s, \sigma \times {\rm BC}^{U}(\pi_1')\otimes \bar{\omega}_{\pi}, \psi)=\gamma(s, \sigma \times {\rm BC}^{U}(\pi')\otimes \bar{\omega}_{\pi}, \psi), \\
& L(s, \sigma \times {\rm BC}^{U}(\pi_1')\otimes \bar{\omega}_{\pi})=L(s, \sigma \times {\rm BC}^{U}(\pi')\otimes \bar{\omega}_{\pi}),
\end{aligned}
\]
This implies that $\pi_1'$ and $\pi'$ are in the same $L$-packet for $H$. This completely describes all members in $\Pi$. This implies the finiteness of $\Pi$ due to the following two facts; first, $L$-packets for $H$ is finite \cite{M15}, and second, there exist finite irreducible constituents in $\pi|_{H}$ \cite{GK82}. Therefore, an $L$-packet for $G$ consists of finite members.\\

We now prove (2), i.e., Shahidi's conjecture for $G$. Suppose $\Pi$ is a tempered $L$-packet for $G$. For $\pi_t \in \Pi$, take an irreducible component $\pi_t'\subset \pi_t|_{H}$ in the restriction. Let $\Pi'$ be the corresponding tempered $L$-packet for $H$. Since Shahidi's conjecture for the group $H$ is known to be true, we conclude $\Pi'$ contains a generic member $\pi_g'$. Both $\pi_g'$ and $\pi_t'$ have the same central character given by the restriction of $\omega_{\pi_t}$ to the center $Z\cap H$ of $H$. Thus the pair $(\pi_g',\omega_{\pi_t})$ defines a representation of the subgroup $ZH\subset G$. Since this is a subgroup of finite index, it follows from a standard argument using Frobenius reciprocity that there is an irreducible generic representation $\pi_g$ of $G$ with central character $\omega_{\pi_t}$ containing $\pi_g'$ upon restriction to $H$.  Since ${\rm BC}^U(\pi_g') \cong {\rm BC}^U(\pi_t')$, it follows that ${\rm BC}(\pi_g) \cong {\rm BC}(\pi_g') \otimes \bar{\omega}_{\pi_g} \cong {\rm BC}(\pi_t') \otimes \bar{\omega}_{\pi_t} \cong {\rm BC}(\pi_t)$. We conclude $\pi_g$ is a member of the $L$-packet $\Pi$.

We now prove (3). Let $\pi$ be a non-tempered representation of $G$ and consider it the Langlands quotient of the induced representation 
\[
\nu^{r_1} \tau_1 \times \cdots \times \nu^{r_m} \tau_m \rtimes \pi_t,
\]
where $\tau_i$ is an irreducible tempered representation of ${\rm GL}(F)$ for each $i=1, \cdots, m,$ and $\pi_t$ is an irreducible tempered representation of $G_{n_t}$ for some $n_t < n$. 

The direct calculation implies that there exists a non-tempered representation $\pi'$ of $H$ that appears in $\pi | H$ such that it is the Langlands quotient of $\nu^{r_1} \tau_1 \times \cdots \times \nu^{r_m} \tau_m \rtimes \pi'_t$ for some irreducible tempered representation  $\pi'_t$ of $H_{n_t}$ that appears in $\pi_t | H_{n_t}$ since $\left(\nu^{r_1} \tau_1 \times \cdots \times \nu^{r_m} \tau_m \rtimes \pi_t\right) |_{H}$ is $\nu^{r_1} \tau_1 \times \cdots \times \nu^{r_m} \tau_m \rtimes \left(\pi_t|_{H_{n_t}}\right)$. Then, due to Definitions \ref{def_BC}, \ref{Lfunction_GU} with Theorem \ref{sbc:non-generic:GU} and property that $L$-functions for $H$ follows the Langlands classification, we have the following:
\[
\begin{aligned}
L(s, \sigma \times \pi)
&
=L(s, \sigma \times {\rm BC}(\pi)) = L(s, \sigma \times {\rm BC}^{U}(\pi')\otimes \bar{\omega}_{\pi})\\
&
= L(s, \pi_t' \otimes \bar{\omega}_{\pi} \times \sigma ) \prod_{i=1}^m L(s+r_i, \tau_i \otimes \bar{\omega}_{\pi} \times \sigma)  
L(s-r_i, \bar{\widetilde{\tau_i}} \otimes \bar{\omega}_{\pi} \times \sigma)
\end{aligned}
\]

Therefore, this implies that  $L$-functions for $G$ defined in Definition \ref{Lfunction_GU} also follows the Langlands classification.  \\
For (4), this is due to proof of Theorem \ref{Langlands parameter} and our construction of {\rm BC}. See also Remark \ref{rem_L} 
\end{proof}
\begin{remark}\label{rem_L}
Note that Definition \ref{Lfunction_GU} gives a new definition of $L$-functions for $G$ in general including non-generic representations. In the generic case this is compatible with Theorem \ref{Langlands parameter} in chapter 3, i.e., $L$-functions from the Langlands-Shahidi method. Furthermore, note that in the tempered case, Proposition \ref{property_Lpacket_GU} implies that a tempered $L$-packet contains a generic one and therefore we can also extend the definition of $L$-functions from the Langlands-Shahidi method from the generic case to tempered case. Also using Langlands classification we can extend $L$-functions from the Langlands-Shahidi method in full generality. This is compatible with our definition (Definition \ref{Lfunction_GU}) due to Proposition \ref{property_Lpacket_GU}.
\end{remark}

We conclude that we define $L$-functions that follows the Langlands classification.
\begin{corollary}
    Assumption \ref{Assumption:LS} is true.
\end{corollary}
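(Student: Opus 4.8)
The plan is simply to collect what has already been established in this section. Recall that Assumption~\ref{Assumption:LS} demands two things: first, the \emph{existence} of local $L$- and $\gamma$-factors $L(s,\sigma\times\pi)$ and $\gamma(s,\sigma\times\pi,\psi)$ for every irreducible admissible representation $\pi$ of $G$ (not only the generic ones), paired against irreducible admissible representations $\sigma$ of $\GL_m(E)$; and second, that these factors \emph{respect the Langlands classification}, meaning that the factor attached to a Langlands quotient is the product of the factor of its tempered building block with the factors attached to the twisted $\GL$-data of its standard module. First I would note that Definition~\ref{Lfunction_GU} supplies the objects themselves: for a unitarizable $\pi$ one sets $L(s,\sigma\times\pi)=L(s,\sigma\times{\rm BC}(\pi))$ and likewise for $\gamma$, where ${\rm BC}(\pi)={\rm BC}^U(\pi')\otimes\bar\omega_\pi$ as in Definition~\ref{def_BC}, and then one propagates this to an arbitrary irreducible admissible $\pi$ through the Langlands classification. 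That this is well defined---independent of the chosen constituent $\pi'\subset\pi|_H$ and of the auxiliary global realization used to define ${\rm BC}$---is exactly the point secured by Theorem~\ref{sbc:non-generic:GU} together with the multiplicity-one properties of base change for even unitary groups.

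Next I would invoke Proposition~\ref{property_Lpacket_GU}(3), which is precisely the statement that the factors of Definition~\ref{Lfunction_GU} respect the Langlands classification: given a non-tempered $\pi$ realized as a Langlands quotient of $\nu^{r_1}\tau_1\times\cdots\times\nu^{r_m}\tau_m\rtimes\pi_t$, the restriction-to-$H$ computation carried out there produces a matching non-tempered constituent $\pi'\subset\pi|_H$, and since the $L$-factors for $H$ are known to follow the Langlands classification (via \cite{M15} and the Langlands classification for $H$), the desired factorization is inherited by the factors of $G$. Finally, Proposition~\ref{property_Lpacket_GU}(4), together with the proof of Theorem~\ref{Langlands parameter} and Remark~\ref{rem_L}, shows that in the generic case Definition~\ref{Lfunction_GU} reproduces the Langlands--Shahidi factors, so the newly constructed factors genuinely extend the generic ones rather than providing an unrelated object. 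Combining these three points yields exactly the content of Assumption~\ref{Assumption:LS}.

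I do not anticipate a real obstacle: the corollary is a bookkeeping statement assembling Theorem~\ref{sbc:non-generic:GU}, Definition~\ref{Lfunction_GU}, and Proposition~\ref{property_Lpacket_GU}. The one point that needs a sentence of care is consistency of the extension from the unitarizable (in particular tempered) case to all irreducible admissible representations: one must check that the factorization one imposes for Langlands quotients agrees with the value $L(s,\sigma\times{\rm BC}(\pi))$ already given on the unitarizable pieces. But this is immediate, since Proposition~\ref{property_Lpacket_GU}(3) establishes precisely that factorization for the base-change-defined factors.
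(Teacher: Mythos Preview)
Your proposal is correct and matches the paper's approach: the corollary is stated without proof, immediately following the sentence ``We conclude that we define $L$-functions that follows the Langlands classification,'' so the paper treats it as an immediate consequence of Definition~\ref{Lfunction_GU}, Proposition~\ref{property_Lpacket_GU} (especially parts (3) and (4)), and Remark~\ref{rem_L}. You have simply spelled out in more detail the bookkeeping that the paper leaves implicit.
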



\begin{thebibliography}{1000000}

\bibitem{A82}
J. Arthur, 
{\it On some problems suggested by the trace formula,}
Lie Group Representations II, Proceedings, Univ. of Maryland (B. Herb, S. Kudla, R. Lipsman and J. Rosenberg, ed.), {\bf 1041} Springer LN, (1982-83), 1--49.

\bibitem{A89}
J. Arthur, 
{\it Unipotent automorohic representations: conjectures,}
Ast\'{e}risque, pp. 13-71 (1989), Orbites unipotentes et repr\'{e}sentations, II.

\bibitem{A13}
J. Arthur, 
{\it The endoscopic classification of representations: Orthogonal and symplectic groups,}
Amer. Math. Soc. Colloq. Publ. Vol 61, American Mathematical Society, Providence, RI, 2013.

\bibitem{Ban06} 
D. Ban, 
{\it Symmetry of Arthur parameters under Aubert involution,}
J. Lie Theory {\bf 16} (2006), no.\,1, 251--270.

\bibitem{B79}
A. Borel, 
{\it Automorphic {$L$}-functions,} 
Automorphic forms, representations and {$L$}-functions ({P}roc. {S}ympos. {P}ure {M}ath. {O}regon {S}tate {U}niv. {C}orvallis {O}re. 1977) {P}art 2 Proc. Sympos. Pure Math. XXXIII Amer. Math. Soc. Providence R.I. (1979), 27--61.

\bibitem{B91}
A. Borel, 
{\it Linear algebraic groups,} 
Graduate Texts in Mathematics, Second Edition, Springer-Verlag New York
{\bf 126} (1991). 


\bibitem{GGP12} 
W. Gan, B. Gross and D. Prasad, 
{\it Symplectic local root numbers, central critical $L$-values, and restriction problems in the representation theory of classical groups (English, with English and French summaries),} Ast\'erisque, {\bf 346} (2012), 111--170.

\bibitem{GK82}
S.S. Gelbart and A. W. Knapp, 
{\it $L$-indistinguishability and $R$ groups for the special linear group,}
Adv. Math., {\bf 43} (1982), 101--121.

\bibitem{G95} 
D. Goldberg, 
{\it Reducibility for non-connected $p$-adic groups, with $G^\circ$ of prime index,} Can. J. Math., {\bf 47}(2) (1995), 344--363.


\bibitem{HT01} 
M. Harris. and R. Taylor, 
{\it The geometry and cohomology of some simple Shimura varieties,} 
Ann. of Math. Stud. Princeton University Press, Princeton, NJ {\bf 151} (2001).
\bibitem{HL} 
Michael Harris and Jean-Pierre Labesse
{\it Conditional Base Change for Unitary Groups}, 
Asian J. Math. Vol. 8, No. 4, pp. 653–684, December 2004.

\bibitem{H06a}
V. Heiermann, 
{\it Orbites unipotentes et p\^oles d’ordre maximal de la fonction $\mu$ de Harish-Chandra (French, with French summary),} Canad. J. Math. {\bf 58} (2006), no.\,6, 1203--1228.

\bibitem{H06b}
V. Heiermann, 
{\it Unipotent orbits and local $L$-functions}, 
J. Reine Angew. Math., {\bf 596} (2006), 103--114.

\bibitem{HeK17} 
V. Heiermann and Y. Kim, 
{\it On the generic local Langlands correspondence for $GSpin$ groups,} 
Trans. Amer. Math. Soc. {\bf 369} (2017), 4257--4291.

\bibitem{HeM07} 
V. Heiermann and G. Mui\'c, 
{\it On the standard modules conjecture,} Math. Z. {\bf 255} (2007), no.\,4, 847--853.

\bibitem{HeO13} 
V. Heiermann and E. Opdam, 
{\it On the tempered $L$-function conjecture,}
Amer. J. Math. {\bf 135} (2013), no.\,3, 777--799.

\bibitem{H83} 
G. Henniart, 
{\it La conjecture de Langlands locale pour GL(3),} 
M\'{e}m. Soc. Math. Fr. no. 11-12 (1983).

\bibitem{H93} 
G. Henniart, 
{\it Caract\'{e}risation de la correspondance de Langlands locale par les facteurs de paires,} 
Invent. Math. {\bf 113} (1993), 339--350.

\bibitem{H00} 
G. Henniart, 
{\it Une preuve simple des conjectures de Langlands pour GL(n) sur un corps $p$-adique,} 
Invent. Math. {\bf 139} (2000), 439--455.

\bibitem{JPSS83}
H. Jacquet, I.I. Piatetskii-Shapiro and J.A. Shalika,
{\it Rankin-{S}elberg convolutions,}
Amer. J. Math. {\bf 105} (1983), no.\,2, 367--464.

\bibitem{JL14}
C. Jantzen and B. Liu, 
{\it The generic dual of $p$-adic split $SO_{2n}$ and local Langlands parameters,}
Israel J. Math. {\bf 204} (2014), no.\,1, 199--260.

\bibitem{KK05}
H. Kim and M. Krishnamurthy,
{\it Stable base change lift from unitary groups to {${\rm GL}_n$,}}
IMRP Int. Math. Res. Pap. {\bf 1} (2005), 1--52.

\bibitem{KK08}
H. Kim and M. Krishnamurthy,
{\it Twisted exterior square lift from ${\rm GU}(2,2)_{E/F}$ to ${\rm GL}_6/F$,}
J. Ramanujan Math. Soc. {\bf 23} (2008), no.\,4, 381--412.

\bibitem{K15}
Y. Kim:
{\it Strongly positive representations of $GSpin_{2n+1}$ and the Jacquet module method. with an appendix, ``Strongly positive representations in an exceptional rank-one reducibility case'' by Ivan Mati\'{c}.}
Math. Z. {\bf 279} (2015), 271--296.

\bibitem{K21}
Y. Kim, 
{\it Langlands-Shahidi $L$-functions for $GSpin$ groups and the generic Arthur packet conjecture,} 
Trans. Amer. Math. Soc. {\bf 374} (2021), 2559--2580.

\bibitem{KM19}
Y. Kim and I. Mati\'c, 
{\it Classification of strongly positive representation of general unitary groups,} In: Aubert AM., Mishra M., Roche A., Spallone S. (eds) Representations of Reductive $p$-adic Groups, Progr. Math. {\bf 328} (2019), 161--174.  

\bibitem{L89}
R. P. Langlands, 
{\it On the classification of irreducible representations of real algebraic groups,} 
Representation Theory and Harmonic Analysis on Semisimple Lie Groups (P. J. Sally Jr. and D. A. Vogan Jr., eds.), Math. Surveys Monogr. {\bf 31} American Mathematical Society, Rhode Island, (1989), 101--170.

\bibitem{L11} 
B. Liu, 
{\it Genericity of representations of $p$-adic $Sp_{2n}$ and local Langlands parameters,} 
Canad. J. Math. {\bf 63} (2011), no.\,5, 1107--1136.

\bibitem{M15}
C. Mok, 
{\it Endoscopic classification of representations of quasi-split unitary groups,} 
Mem. Amer. Math. Soc. {\bf Volume 235} (2015), no.\,1108.

\bibitem{R90}
J.D. Rogawski,
{\it Automorphic representations of unitary groups in three variables,}
Ann. of Math. Stud. Princeton University Press, Princeton, NJ {\bf 123} (1990).

\bibitem {R92}
J.D. Rogawski,
{\it Analytic expression for the number of points mod {$p$},}
The zeta functions of Picard modular surfaces, Univ. Montr\'eal, Montreal, QC, (1992), 65--109.

\bibitem{S84}
F. Shahidi,
{\it Fourier transforms of intertwining operators and {P}lancherel measures for {${\rm GL}(n)$},},
Amer. J. Math. {\bf 106} (1984), no.\,1, 67--111.

\bibitem{S88}
F. Shahidi,
{\it On the Ramanujan conjecture and finiteness of poles for certain $L$-functions,}
Ann. of Math. (2), {\bf 127} (1988), no.\,3, 547--584.

\bibitem{S90}
F. Shahidi, 
{\it A proof of {L}anglands' conjecture on {P}lancherel measures; complementary series for {$p$}-adic groups.}
Ann. of Math. (2) Second Series {\bf 132} (1990), 273--330.

\bibitem{S11}
F. Shahidi,
{\it Arthur Packets and the Ramanujan Conjecture,}
Kyoto J. Math. (memorial issues for the late M. Nagata), {\bf 51} (2011), no.\,1, 1--23.

\bibitem{T92}
M. Tadi\'c, 
{\it Notes on representations of non-Archimedean $SL(n)$,} 
Pac. J. Math. {\bf 152} (1992), no.\,2, 375--396.

\bibitem{BX}
Bin Xu, 
{\it On a lifting problem of L-packets,} 
Compositio Mathematica. {\bf 152} (2016), no.\,9, 1800--1850.

\end{thebibliography}
\end{document}